\newtheorem{theorem}{Theorem}[section]
\newtheorem{corollary}[theorem]{Corollary}
\newtheorem{lemma}[theorem]{Lemma}
\newtheorem{proposition}[theorem]{Proposition}
\theoremstyle{definition}
\newtheorem{definition}[theorem]{Definition}
\newtheorem{remark}[theorem]{Remark}
\newtheorem{example}[theorem]{Example}
\numberwithin{equation}{section}
\DeclareMathAlphabet{\mathpzc}{OT1}{pzc}{m}{it}
\renewcommand{\dim}{\mathsf{dim}}
\DeclareMathOperator{\Coh}{\mathsf{Coh}}
\DeclareMathOperator{\Pic}{\mathsf{Pic}}
\DeclareMathOperator{\Hom}{\mathsf{Hom}}
\DeclareMathOperator{\Ext}{\mathsf{Ext}}
\DeclareMathOperator{\GL}{\mathsf{GL}}
\DeclareMathOperator{\End}{\mathsf{End}}
\DeclareMathOperator{\Mat}{\mathsf{Mat}}
\def\bu{{\scriptscriptsize\bullet}}
\newcommand{\FF}{\mathbb{F}}
\renewcommand{\mod}{\mathsf{mod}}
\def\bu{{\scriptscriptstyle\bullet}}
\newcommand{\kA}{\mathcal{A}}
\newcommand{\kB}{\mathcal{B}}
\newcommand{\kE}{\mathcal{E}}
\newcommand{\kF}{\mathcal{F}}
\newcommand{\kO}{\mathcal{O}}
\newcommand{\kL}{\mathcal{L}}
\newcommand{\kP}{\mathcal{P}}
\newcommand{\kV}{\mathcal{V}}
\newcommand{\kW}{\mathcal{W}}
\newcommand{\kT}{\mathcal{T}}
\newcommand{\lar}{\longrightarrow}
\newcommand{\RR}{\mathbb R}
\newcommand{\CC}{\mathbb C}
\newcommand{\ZZ}{\mathbb{Z}}
\newcommand{\nn}{\mathsf{n}}
			\def\bu{\bullet}
\def\Mat{\mathop\mathrm{Mat}}
\def\8{\infty}			
	\def\+{\oplus}		
\def\*{\otimes}
\def\kA{\mathcal A} 
\def\kB{\mathcal B} \def\kO{\mathcal O}
\def\kC{\mathcal C} \def\kP{\mathcal P}
 \def\kQ{\mathcal Q}
\def\kE{\mathcal E} \def\kR{\mathcal R}
\def\kF{\mathcal F} \def\kS{\mathcal S}
 \def\kT{\mathcal T}
 \def\kU{\mathcal U}
 \def\kV{\mathcal V}
 \def\kW{\mathcal W}
\def\kL{\mathcal L}
	\def\NN{\mathbb N}
\newcommand{\rightarrowdbl}{\longrightarrow\mathrel{\mkern-14mu}\rightarrow}
\def\DMO{\DeclareMathOperator}
\DMO{\ob}{Ob}            \DMO{\mor}{Mor}
\DMO{\Ker}{Ker}
\DMO{\id}{Id}
\title[Algebraic geometry of the multilayer model on a torus]{Algebraic geometry of the multilayer model of the fractional quantum Hall effect on a torus}
\author{Igor Burban}
\address{
Universit\"at Paderborn,
Institut f\"ur Mathematik,
Warburger Strasse 100,
33098 Paderborn,
Germany
}
\email{burban@math.uni-paderborn.de}
\author{Semyon Klevtsov}
\address{
IRMA, Universit\'e de Strasbourg\\
UMR 7501, 7 rue Ren\'e  Descartes,\\
67084 Strasbourg, \\
France
}
\email{klevtsov@unistra.fr}
\begin{document}

\begin{abstract}
In 1993 Keski-Vakkuri and Wen introduced a model for the fractional quantum Hall effect based on  multilayer two-dimensional electron systems satisfying  quasi-periodic boundary conditions. Such a model is essentially specified by a choice of a complex torus $E$ and a symmetric positively definite matrix $K$ of size $g$ with non-negative integral coefficients, satisfying some further constraints.

The space of the corresponding wave functions turns out to be $\delta$-dimensional, where $\delta$ is the determinant of $K$.  We construct   a hermitian holomorphic bundle of rank $\delta$ on the abelian variety $A$ (which is the $g$-fold product of the torus $E$ with itself), whose fibres can be identified with the space of wave function of Keski-Vakkuri and Wen. A rigorous construction of this ``magnetic bundle'' involves the technique of Fourier-Mukai transforms on abelian varieties. The constructed  bundle turns out to be  simple and semi-homogeneous and it  can be equipped with two different  (and natural) hermitian metrics: the one coming from the center-of-mass dynamics and the one coming from the Hilbert space of the underlying many-body system. We prove that the canonical Bott--Chern connection of the first hermitian metric is always projectively flat and give sufficient conditions for this property for the second hermitian metric.
\end{abstract}

\maketitle

\section{Introduction} The experimental discovery of the integer \cite{Klitzing} and then the fractional quantum Hall effects \cite{Tsui}  (IQHE, FQHE)  are widely considered to be the major events in the condensed matter physics in the second half of the twentieth century. Both experimental and theoretical aspects of these phenomena for other several decades 
continued to attract enormous attention and, in particular, have led to several Nobel prizes -- awarded to von Klitzing (1985), Laughlin,  St\"ormer and Tsui (1998), and to Haldane and Thouless (2016).

The most basic experimental setting involves certain two-dimensional electronic systems such as, originally, gallium arsenide heterostructures 
and, more recently,  graphene, and can be described as follows. The system is subjected to a perpendicular magnetic field $B$ which deflects the longitudinal electric current $I$ by means of the Lorentz force. As a consequence, a transversal tension $V$ proportional to $I$, $V=I/\sigma$ builds up, with the coefficient $\sigma$ called the Hall conductance.

\begin{figure}[ht!]
\includegraphics[width=7cm]{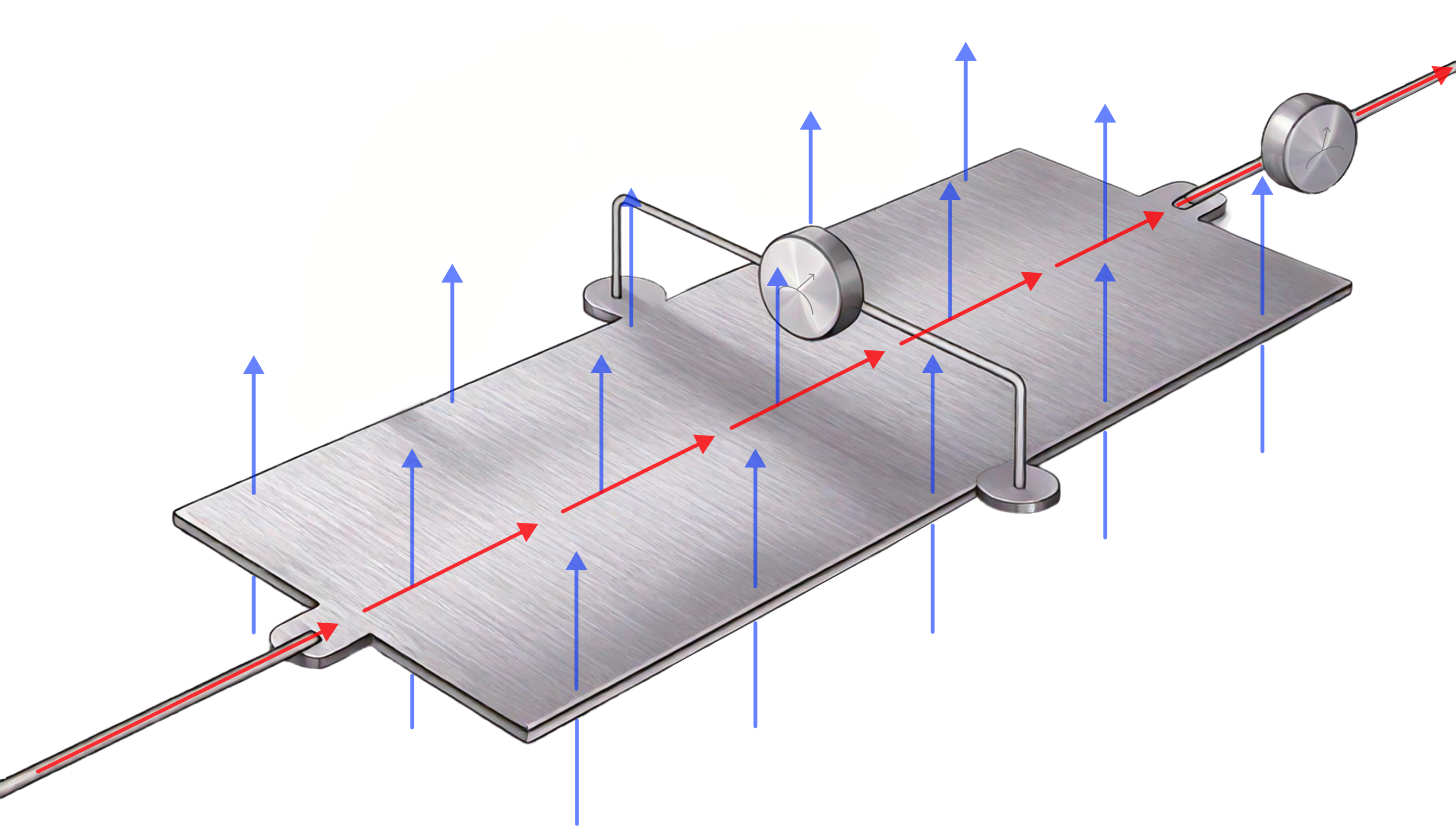}
\end{figure}

In classical electrodynamics, $\sigma$ is inverse proportional to the magnetic field $B$.
When subjected to large magnetic field and low temperatures, the Hall conductance as a function of the magnetic field undergoes instead a series of plateaux, where it takes on integer (IQHE) or fractional (FQHE) values, as measured in units of electron charge squared over the Planck constant $\frac{e^2}{h}$. Remarkably, this quantization phenomenon is extremely precise, with accuracy of up to $10^{-8}$, despite of the imprecise characteristics of the material. 

The IQHE by now rather well-understood, see e.g. \cite{Avron,ASZ,Bellissard}, since it is described by non-interacting electron systems. The FQHE, on the other hand, is a strongly-interacting electron system, that makes it extremely challenging problem for physicists and for mathematicians. The most successful approach to FQHE is due to Laughlin \cite{Laughlin} and is based on many-body electronic wave functions defined axiomatically, rather than derived from a particular hamiltonian. Such states are called quantum Hall (QH) states and the best known Laughlin state corresponds to the values of the Hall conductance given by simple fractions $\frac{1}{m}$, where $m\in 2\mathbb{N}_0+1$.

A customary analytic tool central to this paper, is to consider such wave functions on a compact Riemann surface. This idea goes back to Thouless et al., who suggested to use quasi-periodic boundary conditions \cite{ThoulessEtAl} and Haldane--Rezayi \cite{HaldaneRezayi} who constructed Laughlin state on a torus. The key features of the torus models are the following. Firstly, the  QH states are degenerate and finite dimensional. Secondly, they form a vector bundle over the parameter space of Aharonov--Bohm phases, which can be identified with the  Jacobian variety of the torus. This allows to explain a fractional quantization of the Hall conductance:  it turns out to be equal to the slope, i.e. the first Chern class of the bundle divided by its rank \cite{NiuThoulessWu,AvronSeiler,TaoHaldane,KleinSeiler}. We should mention the seminal papers by Avron, Seiler and Zograf on geometric adiabatic transport of the integer QH states on moduli spaces \cite{ASZ,ASZ1}, see also \cite{KMMW} and \cite{SK2015} for a review, and \cite{KlevtsovZvonkine} for a generalization to the Laughlin states. For the physics introduction to QH states on curved backgrounds we refer to \cite{CLW}.

Let us explain our results. A \emph{Wen's matrix} is a symmetric positive definite matrix $K \in \Mat_{g \times g}(\NN_0)$ such that  
\begin{itemize}
\item all entries of the vector $\vec{u} := K^{-1} \vec{e} \in \mathbb{Q}^g$
are positive, where $\vec{e} = \left(\begin{array}{c} 1 \\ \vdots \\ 1\end{array}\right)$; 
\item the diagonal entries of $K$ are either all even or all odd. 
\end{itemize}
In what follows, we put $\delta := \det(K)$. Next, $\rho$ is the sum of the entries of the adjunct matrix $K^\sharp$ (recall that $K^\sharp \in \Mat_{g \times g}(\ZZ)$ and $K^\sharp K = \delta I_g$). We call $K$ \emph{primary} if $\gcd(\delta, \rho) =1$. An important example of a primary Wen's matrix is  $$K_{p, g} = \left(
\begin{array}{cccc}
p+1 & p & \dots & p \\
p & p+1 & \dots & p \\
\vdots & \vdots & \ddots & \vdots \\
p & p & \dots & p+1
\end{array}
\right) \in \mathrm{Mat}_{g \times g}(\NN).$$ In this case we have:    $\delta =  pg +1$ and $\rho = g$.

The multi-layer torus model of  Keski--Vakkuri and Wen \cite{KeskiVakkuriWen} (see also \cite{Wen}) is defined by a datum  $\bigl(E, (K, \vec{\nn})\bigr)$. Here, 
$E = \CC/(\ZZ + \tau \ZZ)$ is a complex torus (where $t = \mathfrak{Im}(\tau) > 0$), $K$ is a Wen's matrix and $\vec\nn \in \NN$ is such that $K \vec\nn = d \vec{e}$ for some $d \in \NN$.

For any $\vec{c} \in \Pi = \Pi_K := K^{-1} \ZZ^g/\ZZ^g$ and 
$\vec{\xi} \in \CC^g$, consider the function $\CC^{n} \stackrel{\Phi_{\vec{c}}}\lar \CC$ depending on $n = n_1 + \dots + n_g$ variables $z^{(k)}_r$ (where $1 \le k \le g$ and  $1 \le r \le n_k$), 
given by the formula
$
\Phi_{\vec{c}} = \Theta\bigl[\vec{c}, \vec{0}\bigr](K\vec{w} + \vec{\xi}|\Omega) \cdot D_{K, \vec{\nn}},
$
where
$$
D_{K, \vec{\nn}} = 
\prod\limits_{k = 1}^g \left(\prod\limits_{1 \le p < q \le n_k} 
\vartheta\left(z_p^{(k)} - z_q^{(k)} \right)\right)^{K_{kk}} \cdot \prod\limits_{ 1 \le k < l \le g} \left(\prod\limits_{p = 1}^{n_k} \prod\limits_{q = 1}^{n_l}
\vartheta\left(z_p^{(k)} - z_q^{(l)} \right)\right)^{K_{kl}}.
$$
Here, 
$\Omega = \tau K$, $\vec{w} = \left(\begin{array}{c} z_1^{(1)} + \dots + z_{n_1}^{(1)} \\ \vdots \\  z_1^{(g)} + \dots + z_{n_g}^{(g)}\end{array} \right)$, $\vartheta(z)$ is the first theta-function of Jacobi  and $\Theta[\vec{a}, \vec{b}](\vec{w} \big| \Omega)$ is the multi-variate theta function of Riemann \cite{Mumford} 
(with $\vec{a}, \vec{b} \in \RR^g$).
For $K = K_{p,g}$ we have:  
$$
D_{K, \vec\nn} = 
\left[\prod\limits_{k = 1}^g \prod\limits_{1 \le s < t \le n} \vartheta(z^{(k)}_s - z^{(k)}_t)\right]^{p+1}
\left[\prod\limits_{1 \le k < l \le g}^g \prod\limits_{s, t = 1}^n \vartheta(z^{(k)}_s - z^{(l)}_t)\right]^{p}.
$$
For the fermionic statistic, the parameter $p$ is required to be even and for the bosonic statistic $p$  is odd.

As it will be explained in the paper, the  functions $\bigl(\Phi_{\vec{c}}\bigr)_{\vec{c} \in \Pi}$, introduced  by Keski-Vakkuri and Wen in \cite{KeskiVakkuriWen}, belong to the ground state $\mathsf{W}_{K, \vec{\nn}, \vec{\xi}}$ \,of an appropriate many-body magnetic Schr\"odinger operator without interactions and their linear span $\mathsf{V}_{K, \vec{\nn}, \vec{\xi}}$ forms a proper linear subspace that can be axiomatically characterized; see Lemma \ref{L:HRwavefunction}. In the singe-layer case $g = 1$, 
these functions coincide with the wave functions of Haldane and Rezayi \cite{HaldaneRezayi}.

\smallskip
\noindent
The  main results of this work are the following.
\begin{itemize}
\item We construct a holomorphic  vector bundle $\kB$ on  $A = \CC^g/(\ZZ^g + \tau \ZZ^g)$ (called \emph{magnetic bundle} of the multi-layer torus model) for which there are ``natural'' isomorphisms  $\kB\Big|_{\bigl[\vec\xi\bigr]} \cong \mathsf{V}_{K, \vec{\nn}, \vec\xi}$ for any  $\vec\xi \in \CC^g$; see
Theorem \ref{T:MagneticBundle}.
\item We show that $\End(\kB) = \CC$ and the  first Chern class of $\kB$ is given by the formula
\begin{equation*}
c_1(\kB) = -\frac{i}{2t} \sum\limits_{p, q = 1}^g K^\sharp_{pq} d\xi_{p} \wedge d\bar{\xi}_q,
\end{equation*}
where $K^\sharp$ is the adjunct matrix of $K$. 
Moreover, the vector bundle $\kB$ is  semi-homogeneous; see 
Theorem \ref{T:MagneticBundle}. This, in particular,  implies that the total Chern class of $\kB$ can be expressed via its first Chern class: 
$c(\kB) = \left(1 + \dfrac{c_1(\kB)}{\delta}\right)^\delta.$
\item The analytic type of $\kB$ is determined by the matrix $K$ and does not depend on the choice  of $\vec{\nn}$ (i.e.~it is independent on the number of particles of our system). However, $\kB$ can be equipped with  a natural hermitian metric, arising from the embedding $\kB\Big|_{\bigl[\vec\xi\bigr]} \cong \mathsf{V}_{K, \vec{\nn}, \vec\xi} \subset \mathsf{W}_{K, \vec{\nn}, \vec\xi}$ (in other words, this metric is defined by the scalar product of  the Hilbert space of the underlying many-body problem), which depends on $\vec{\nn}$.
\item If $K$ is primary  (e.g.~$K = K_{p, g}$) then we prove that 
$\bigl(\Phi_{\vec{c}}\bigr)_{\vec{c} \in \Pi}$ is an orthogonal basis of the vector space $\mathsf{V}_{K, \vec{\nn}, \vec\xi}$ and the norms of all its basis vectors are equal; see Theorem \ref{T:GramMatrixKVWSpace}.
This implies that  the canonical Bott--Chern connection of the hermitian holomorphic vector bundle $\kB$ is projectively flat; see Proposition \ref{P:ProjFlat}. There exists another natural choice of a hermitian metric on $\kB$ (coming from the dynamics of the center of mass), with respect to which the corresponding  Bott--Chern connection is projectively flat for an \emph{arbitrary} Wen's matrix  $K$; see Theorem \ref{T:HeisenbergGroupRepr} 
and Remark \ref{R:ProjFlat}. 
\item The \emph{restricted magnetic bundle} $\kU$ is the restriction of $\kB$ on $E$ with respect to the diagonal embedding $E = \CC/(\ZZ + \tau \ZZ) \lar \CC^g/(\ZZ^g + \tau \ZZ^g) = A$. We show that $\kU$ has rank $\delta$ and degree $-\rho$. In particular, its slope is 
\begin{equation*}
\frac{\mathsf{deg}(\kU)}{\mathsf{rk}(\kU)}  = - \dfrac{\rho}{\delta}.
\end{equation*}
Moreover, $\kU$ is stable if and only if $K$ is primary; see Theorem \ref{T:MagneticBundleCurve}. 
In particular, for $K = K_{p, g}$  the absolute value of the slope of $\kU$  is a Jain fraction  \cite{Jain}: 
\begin{equation*}
\left|\frac{\mathsf{deg}(\kU)}{\mathsf{rk}(\kU)} \right| = \dfrac{g}{gp +1},
\end{equation*}
see Example \ref{E:JainWen} and Remark \ref{R:Jain}.
\end{itemize}

In the case of the IQHE, Avron, Seiler and Simon \cite{AvronSeilerSimon} interpreted the Hall conductance as the  (properly rescaled) first Chern class of a magnetic line bundle (the physical derivation for the underlying expression of the Hall conductance was obtained by Thouless et al.~in \cite{ThoulessEtAl}); see also \cite{Kohmoto} for further elaborations. We refer to \cite{KleinSeiler} for the interpretation of the fractional quantization of the Hall conductance in the FQHE via the slope of the restricted magnetic vector bundle;  see also  \cite[formula (3.1)]{NiuThoulessWu}. Thus, the multi-layer torus model based on $K$-matrices of type  $K_{p, g}$ corresponds to the main series of Hall fractions which were experimentally observed. 

The first attempt to include  the works \cite{HaldaneRezayi, KeskiVakkuriWen} into an algebro-geometric framework was made by Varnhagen \cite{Varnhagen}. This approach was continued by  Guilarte, Mu\~noz Porras and de la Torre Mayado \cite{Salamanca}. In that important work,  the usage of Fourier--Mukai transforms in condensed matter physics was in particular pioneered, years before this technique became to be  established tool in the string theory! First steps towards the algebro-geometric study of multi-layer models were made in a subsequent work \cite{Salamanca2}. Unfortunately, both papers \cite{Salamanca, Salamanca2} did not get appropriate attention neither in the mathematics   nor in the physics   literature (partially due to the fact that mathematical tools of that works appeared  at a time to be too uncommon for the physics community, whereas  the mathematical exposition was lacking clarity and precision in details).

On the physics side, the original interest in the model of Keski-Vakkuri and Wen stemmed from its relevance to the experiments in a GaAs/AlGaAs heterostructure forming a double quantum well, where novel correlated states can emerge when the inter-layer and intra-layer Coulomb interactions are comparable to each other, see \cite{KeskiVakkuriWen} and references therein. 
More recently, with the discovery that in bilayer graphene the electronic flat bands can occur, it was suggested that the electron spin and graphene-valley isospin degrees of freedom may play the role of the layers of the Keski-Vakkuri-Wen model. This has recently led to the revival of interest in the multi-layer model, which we shall not attempt to review here and instead refer the reader to \cite{MagicAngles}.

The plan of the paper is the following. In Section \ref{S:ThetaFunctions} we review (mainly following \cite{Mumford}) some standard results on theta-functions of one complex variable, regarding them as holomorphic sections of appropriate line bundles on elliptic curves. We also discuss the Landau problem on a complex torus and explain that the ground state of a one-particle magnetic Schr\"odinger operator coincides with the space of holomorphic sections of a holomorphic line bundle. Here the finite Heisenberg groups acting on the ground state plays an important role. 

For the reader's convenience, we first discuss in Section \ref{S:HaldaneRezayi} the single-layer torus model of Haldane and Rezayi \cite{HaldaneRezayi}, before going to multi-layer case. The main results of this Section are Lemma \ref{L:HRSpace} (giving an ``axiomatic'' characterization of the vector space of these functions) and Theorem \ref{T:GramMatrixHRSpace} about the Gram matrix of the distinguished basis of this space. 

In Section \ref{S:CenterMass} we study the space of global holomorphic sections of a certain line bundle on a product of elliptic curves. This space admits an action of a generalized Heisenberg group and we construct in Theorem \ref{T:HeisenbergGroupRepr} its distinguished basis. 

An algebro-geometric characterization of the space of wave functions of Keski-Vakkuri and Wen \cite{KeskiVakkuriWen} is given in Section \ref{S:WaveFunctions};   see in particular Remark \ref{R:CenterMassAllWave}. An important result is that  two apriori different actions by magnetic translations on the space $\mathsf{V}_{K, \vec\nn, \vec\xi}$ of wave functions of Keski-Vakkuri and Wen coincide; see Lemma \ref{L:Magnetic}.

In Section 
\ref{S:FMT}, we recall the theory of vector bundles on abelian varieties in the context of Fourier--Mukai transforms. The main results here are Theorem \ref{T:FMTCenterMass} about an explicit description of the dual isogeny induced by the matrix $K$ and properties of the Fourier--Mukai transforms of certain distinguished  line bundles. 

In Section \ref{S:Multilayer}, we define the magnetic vector bundle $\kB$ on $\CC^g/(\ZZ^g + \tau \ZZ^g)$ attached to a datum $\bigl(E, (K, \vec{\nn})\bigr)$ and explore its basic properties (see Theorem \ref{T:MagneticBundle}). The holomorphic vector bundle $\kB$ carries two natural hermitian metrics: the one determined  by the many-body dynamics and the one given by the center-of-mass dynamics. The corresponding  Bott--Chern connection is always projectively flat with respect to the second metric as well as with respect to the first metric under the assumption that Wen's matrix $K$ is primary. 

Finally, in Section \ref{S:RMultilayer},  we study properties of the restricted magnetic bundle $\kU$; the main results are  given by Theorem  \ref{T:MagneticBundleCurve}. 

\smallskip
\noindent
\emph{List of notation}. For convenience of the reader we introduce now the most important
notation used in this paper.
\begin{itemize}
\item We fix $\tau \in \CC$ with $t = \mathfrak{Im}(\tau) > 0$. Next, $E = \CC/\langle 1, \tau\rangle$ denotes the corresponding complex torus. For $a, b \in \RR$ we set $\xi = a\tau + b \in \CC$. Abusing the notation, we frequently identify $\xi \in \CC$ with the corresponding image $[\xi] \in E$. For $z \in \CC$ we use its presentations $z = u + i v = x + \tau y$ with $u, v, x, y \in \RR$. 
\item For any $k \in \ZZ$ and $\xi \in E$, we denote by $\kL_{k, \xi}$ the holomorphic line bundle on $E$ of degree $k$ and continuous parameter $\xi$. It carries a natural hermitian metric denoted by $h$. For $k > 0$, we denote by $\mathsf{W}_{k, \xi}$ the space of holomorphic sections of 
$\kL_{k, \xi}$. It  has a distinguished basis $(h_1, \dots, h_k)$ given by  theta-functions $h_j(z) = \vartheta\Bigl[\frac{j-1}{k}, 0\Bigr](kz + \xi | k \tau)$. We also denote by $\vartheta(z) :=  \vartheta\left[\frac{1}{2}, \frac{1}{2}\right](z | \tau)$ the first theta-function of Jacobi. 
Next, $\mathsf{B}_{k, \xi}$ (respectively $\mathsf{H}_{k, \xi}$) denotes the space of smooth (respectively square integrable) global sections of $\kL_{k, \xi}$. We also put  $\kL_{k, \xi}^\sharp = \kL_{k, \xi} \otimes \kS$, where $\kS$ is a distinguished spin bundle on $E$ and denote by $\mathsf{W}^\sharp_{k, \xi}$, $\mathsf{B}^\sharp_{k, \xi}$ and $\mathsf{H}^\sharp_{k, \xi}$ the corresponding spaces of holomorphic, smooth and square integrable global sections. 
\item $T_1$ and $T_2$ are magnetic translations, acting on the spaces $\mathsf{W}_{k, \xi}$, $\mathsf{H}_{k, \xi}$ etc. 
\item In this work, $g \in \NN$ always denotes the number of layers of the multi-layer torus model of the FQHE. The notation is chosen in such a way that the single- and multi-layer cases match. Elements of $\CC^g$ are denoted by $\vec{z}$, $\vec{\xi}$ etc. We denote by  $\Theta[\vec{a}, \vec{b}](\vec{z}\,|\, \Omega)$ the multi-variate theta function of Riemann with characteristics $\vec{a}, \vec{b} \in \RR^g$, defined by a symmetric matrix $\Omega$ with positive definite imaginary part. 
\item 
A Wen datum is a pair $(K, \vec\nn)$, where  $K \in \mathsf{Mat}_{g \times g}(\NN_0)$ is a positive definite matrix and $\vec{\nn} = (n_1, \dots, n_g)$ is a tuple, whose entries are the numbers of particles in each layer (with further properties to be satisfied). We put:  $\delta = \det(K)$ and
$\Pi_K = K^{-1} \ZZ^g/\ZZ^g$. Next,  $G_K$ is the finite Heisenberg group determined by $K$,  $n = n_1 + \dots + n_g$ is the entire number of particles of our system and $\Omega = \tau K$. 
\item We set $A = \CC^g/(\ZZ^g + \tau \ZZ^g)$, $B = \CC^g/(\ZZ^g + \tau K \ZZ^g)$ and
$X := 
\underbrace{E \times \dots \times E}_{n  \; \mbox{\scriptsize{\sl times}}}$. Next, we consider a distinguished isogeny 
$A \stackrel{\kappa}\lar B, [\vec{z}] \mapsto [K \vec{z}]$. 
\item $\mathsf{V}_{K, \vec{\nn}, \vec{\xi}}$ is the space of wave functions of Keski-Vekkuri and Wen, which is a subspace of the ground state $\mathsf{W}_{K, \vec{\nn}, \vec{\xi}}$ of the underlying many-body problem. The space $\mathsf{V}_{K, \vec{\nn}, \vec{\xi}}$ has a distinguished basis $\bigl(\Phi_{\vec{c}}\bigr)_{\vec{c} \in \Pi}$ given in terms of theta-functions. 
\item For a complex algebraic variety $Y$ we denote by $\Coh(Y)$ its category of coherent sheaves and by $D^b\bigl(\Coh(Y)\bigr)$ the corresponding derived category. For a morphism $Y \stackrel{f}\lar Y'$ of complex algebraic varieties, we denote by  
$\Coh(Y) \stackrel{f_\ast}\lar \Coh(Y')$ and $\Coh(Y') \stackrel{f^\ast}\lar \Coh(Y)$ 
the  classical direct and inverse image functors, respectively, whereas 
$D^b\bigl(\Coh(Y)\bigr) \stackrel{Rf_\ast}\lar D^b\bigl(\Coh(Y')\bigr)$ denotes the derived direct image functor. In the case   $f^\ast$ is exact,  we use the same notation $f^\ast$ for the corresponding derived functor $D^b\bigl(\Coh(Y')\bigr) \stackrel{f^\ast}\lar D^b\bigl(\Coh(Y)\bigr)$.
 
\item For a complex torus  $T$ we denote by $\widehat{T}$ the corresponding dual torus. 
Calligraphic letters like $\kL, \kR, \kF$  etc.~are always used for  vector bundles/coherent sheaves. In particular, $\kB$ denotes the ``magnetic bundle'', $\kQ$  the Poincar\'e line bundle for a general abelian variety $T$ and $\kP$ the Poincar\'e line bundle for $E$. For an abelian variety $T$, $D^b\bigl(\Coh(T)\bigr) \stackrel{\FF_T}\lar D^b\bigl(\Coh(\widehat{T})\bigr)$ is the corresponding Fourier--Mukai transform. 

\item If $Y_1$ and $Y_2$  are complex varieties and   $\kF_i$ is a coherent sheaf on $Y_i$ for $i = 1, 2$ then 
$\kF_1 \boxtimes \kF_2 = \pi_1^\ast(\kF_1) \otimes \pi_2^\ast(\kF_2)$, where $Y_1 \times Y_2 \stackrel{\pi_i}\lar Y_i$ is the canonical projection. 
\end{itemize}

\section{Holomorphic line bundles on complex tori 
and  theta-functions}\label{S:ThetaFunctions}

\noindent
 Let $\tau \in \CC$ be such that $t = \mathfrak{Im}(\tau) > 0$, $\Lambda = \langle 1, \tau\rangle_{\mathbbm{Z}} \subset \CC$ be the corresponding lattice and  $E := \CC/\Lambda$ be the corresponding complex torus. Recall that the theta-function with characteristics $a, b \in \RR$ is defined via the  series:
\begin{equation}\label{E:ThetaWithCharacteristics}
\vartheta[a, b](z | \tau):= \sum\limits_{k \in \ZZ} 
\exp\bigl(\pi i \tau (k+a)^2+2\pi i (k+a)(z+b)\bigr).
\end{equation}
We have the following classical results, see e.g.~\cite[Chapter I]{Mumford}.
\begin{proposition}\label{P:Basics} As a function of $z$, the theta-series (\ref{E:ThetaWithCharacteristics}) converges absolutely and uniformly {on compact subsets of} $\CC$ and satisfies the following quasi-periodic conditions:
\begin{equation}\label{E:ThetaFunctionsTransfRules}
\left\{
\begin{array}{lcl}
\vartheta[a, b](z + 1 | \tau ) & = & \exp(2\pi i a) \vartheta[a, b](z | \tau) \\
\vartheta[a, b](z + \tau | \tau) & = & \exp(- 2\pi i (z +b) - \pi i \tau) \vartheta[a, b](z | \tau).
\end{array}
\right.
\end{equation}
Modulo the lattice $\Lambda$, the function $\vartheta[a, b](z | \tau)$ has a unique  zero at the point 
\begin{equation*}
p_\xi:= \sigma - \xi,\; \mbox{\rm where} \; \sigma = \dfrac{1 + \tau}{2} \, \mbox{\rm and} \; \xi = a \tau + b.
\end{equation*} Moreover, this zero is simple. 
In particular, the function
\begin{equation}\label{E:ThetaOdd}
\vartheta(z) :=  \vartheta\left[\frac{1}{2}, \frac{1}{2}\right](z | \tau)
\end{equation}
has a unique simple zero at $z = 0$ modulo $\Lambda$. Moreover, it is odd: $\vartheta(-z) = - \vartheta(z)$. 
\end{proposition}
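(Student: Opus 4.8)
The plan is to dispatch the four assertions in order, the zero-count being the real content. For convergence, writing $\tau = s + it$ with $t = \mathfrak{Im}(\tau) > 0$, I would bound the modulus of the $k$-th summand by $\exp\bigl(-\pi t (k+a)^2 - 2\pi (k+a)\,\mathfrak{Im}(z+b)\bigr)$. On a compact subset of $\CC$ the quantity $\mathfrak{Im}(z)$ stays bounded, so the linear term in $(k+a)$ is dominated by the Gaussian factor $\exp(-\pi t(k+a)^2)$; the resulting majorant series converges, and the Weierstrass $M$-test yields absolute and locally uniform convergence. In particular $\vartheta[a,b](\,\cdot\,|\tau)$ is an entire function of $z$.

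For the quasi-periodicity I would argue directly from the series. Replacing $z$ by $z+1$ multiplies every summand by $\exp(2\pi i(k+a)) = \exp(2\pi i a)$ (as $k \in \ZZ$), which factors out and gives the first relation. For the second relation I would complete the square in the exponent of the $z+\tau$ series: one has $\pi i\tau(k+a)^2 + 2\pi i\tau(k+a) = \pi i\tau(k+a+1)^2 - \pi i\tau$ together with $2\pi i(k+a)(z+b) = 2\pi i(k+a+1)(z+b) - 2\pi i(z+b)$, so after pulling out the $k$-independent factor $\exp(-2\pi i(z+b)-\pi i\tau)$ a shift of the summation index reproduces $\vartheta[a,b](z|\tau)$.

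The heart of the proof is the zero count, for which I would apply the argument principle on a fundamental parallelogram $P$ with vertices $z_0, z_0+1, z_0+1+\tau, z_0+\tau$, chosen so that $\vartheta[a,b]$ has no zero on $\partial P$. From the two transformation laws the logarithmic derivative $\vartheta'/\vartheta$ is $1$-periodic and satisfies $(\vartheta'/\vartheta)(z+\tau) = (\vartheta'/\vartheta)(z) - 2\pi i$. Pairing opposite edges of $\partial P$: the pair of edges in the $\tau$-direction cancel because $\vartheta'/\vartheta$ is $1$-periodic, whereas the pair of horizontal edges, matched by the period $\tau$, combine through the shift relation to contribute $\int_{z_0}^{z_0+1} 2\pi i\,dz = 2\pi i$. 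Hence $\frac{1}{2\pi i}\oint_{\partial P}\frac{\vartheta'}{\vartheta}\,dz = 1$, so $\vartheta[a,b]$ has a single zero in $P$ counted with multiplicity, which simultaneously gives uniqueness modulo $\Lambda$ and simplicity. I expect this contour computation — keeping the edge orientations and the $-2\pi i$ bookkeeping straight — to be the one step demanding genuine care.

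Finally, to locate the zero and to obtain oddness, I would record the elementary identity $\vartheta[a,b](z|\tau) = \exp\bigl(\pi i\tau a^2 + 2\pi i a(z+b)\bigr)\,\vartheta[0,0](z+\xi|\tau)$ with $\xi = a\tau+b$, obtained by expanding $(k+a)^2$ and $(k+a)(z+b)$ and absorbing the $a$-dependent terms. Since the prefactor never vanishes, the zeros of $\vartheta[a,b]$ are precisely the points $z$ for which $z+\xi$ is a zero of $\vartheta[0,0]$, so it suffices to show $\vartheta[0,0]$ vanishes at $\sigma = \frac{1+\tau}{2}$; taking $[a,b] = [\tfrac12,\tfrac12]$, so $\xi = \sigma$, this is equivalent to $\vartheta = \vartheta[\tfrac12,\tfrac12]$ vanishing at $0$. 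Substituting $z \mapsto -z$ and reindexing $k \mapsto -k-1$ sends $k+\tfrac12 \mapsto -(k+\tfrac12)$ and produces the factor $\exp(-2\pi i(k+\tfrac12)) = -1$ uniformly, proving $\vartheta(-z) = -\vartheta(z)$ and hence $\vartheta(0) = 0$. Combined with the zero count this gives the unique simple zero of $\vartheta$ at $0$, and through the displayed identity the unique simple zero of $\vartheta[a,b]$ at $p_\xi = \sigma - \xi$.
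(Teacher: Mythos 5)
Your proof is correct. Note that the paper itself offers no proof of this proposition: it is stated as a classical result with a citation to Mumford's \emph{Tata lectures on theta I}, Chapter I, and your argument — termwise Gaussian domination for convergence, index shifts for the quasi-periodicity, the argument principle on a fundamental parallelogram for the count, and the reduction $\vartheta[a,b](z|\tau) = \exp\bigl(\pi i\tau a^2 + 2\pi i a(z+b)\bigr)\vartheta[0,0](z+\xi|\tau)$ together with oddness of $\vartheta\bigl[\tfrac12,\tfrac12\bigr]$ to locate the zero — is precisely that standard argument. The only step left implicit is that $\vartheta[a,b]$ is not identically zero (needed to choose $z_0$ with $\partial P$ free of zeros); this follows at once from uniqueness of Fourier coefficients, since the series in $e^{2\pi i(k+a)z}$ has nonvanishing coefficients $\exp\bigl(\pi i\tau(k+a)^2 + 2\pi i(k+a)b\bigr)$.
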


\smallskip
\noindent
Recall (see e.g.~\cite{BirkenhakeLange}) that an 
\emph{automorphy factor} is a pair
$(\Upsilon, V)$, where $V$ is a finite dimensional vector space over $\CC$ and
$
\Upsilon: \Lambda \times \CC \lar \GL(V)
$
is a holomorphic function such that $\Upsilon(\lambda + \mu, z) = \Upsilon(\lambda, z + \mu) \Upsilon(\mu, z)$ for all
$\lambda, \mu \in \Lambda$ and $z \in \CC$. Such a pair defines the following  holomorphic vector bundle on
the torus $E$:
\begin{equation*}
\kE(\Upsilon, V) := \CC \times V/\sim, \;
\mbox{where} \; \;  (z, v) \sim \bigl(z + \lambda, \Upsilon(\lambda, z) v\bigr) \;\; 
\mbox{for all} \;\;  (\lambda, z, v) \in \Lambda \times \CC \times V.
\end{equation*}
Two such vector bundles $\kE(\Upsilon', V)$ and $\kE(\Upsilon'', V)$ are isomorphic if and only if
there exists a holomorphic function $\Xi: \CC \rightarrow \GL(V)$ such that
$$
\Upsilon''(\lambda, z) = \Xi(z + \lambda) \Upsilon'(\lambda, z) \Xi(z)^{-1} \quad
\mbox{for all} \quad (\lambda, z) \in \Lambda \times \CC.
$$
Now we consider the special case of line bundles, here we have $V = \CC$. For an automorphy factor $\Upsilon$ we put $\upsilon_1(z) = \Upsilon(1, z)$ and $\upsilon_\tau(z) = \Upsilon(\tau, z)$. Obviously, $\Upsilon$ is specified by the pair $(\upsilon_1, \upsilon_\tau)$ and we shall use the notation
$
\kL(\upsilon_1, \upsilon_\tau) := \kE(\Upsilon, \CC).
$

\smallskip
\noindent
Recall the following well-known results; see e.g.~\cite{Mumford, BK4}.
\begin{proposition}\label{P:LibeBundlesTorus}  In the notation as above, the  following statements are true. 
\begin{itemize}
\item For any $a, b \in \RR$ we have:
\begin{equation}\label{E:LBTorus}
\kL\bigl(\exp(2\pi i a), \exp(-2\pi i b)\bigr) \cong 
\kL(1, \exp(-2\pi i \xi)\bigr) \cong \kO\bigl([0]-[\xi]\bigr),
\end{equation}
where $\xi = a \tau + b$.
\item Let $\varphi (z) := \exp(-\pi i \tau - 2 \pi i z)$. Then we have: 
\begin{equation}
\kL\bigl(1, \exp(-2\pi i \xi)\varphi(z)\bigr) \cong \kO\bigl([p_\xi]\bigr).
\end{equation}
In particular, the  space of holomorphic sections of   $\kL\bigl(1, \exp(-2\pi i \xi)\varphi(z)\bigr)$ is one-dimensional and generated by $\vartheta[0, 0](z + \xi | \tau)$. 
\item The set of the isomorphism classes of holomorphic line bundles on $E$ of degree $k \in \ZZ$ can be described as $\kL_{k, \xi} := 
\kL\bigl(1, \exp(-2\pi i \xi)\varphi(z)^k\bigr)$, with $\xi = a \tau + b$  and $a, b \in \RR/\ZZ$. For $k > 0$, the space $\mathsf{W}_{k, \xi}:= \Gamma\bigl(E, \kL_{k, \xi})$ of holomorphic sections of the line bundle $\kL_{k, \xi}$ has dimension $k$ and has a distinguished basis $(h_1, \dots, h_k)$, where
\begin{equation}\label{E:BasisThetaFunctions}
h_j(z) = \vartheta\Bigl[\frac{j-1}{k}, 0\Bigr](kz + \xi | k \tau) \; \mbox{\rm for} \; 1 \le j \le k.
\end{equation}
\end{itemize}
\end{proposition}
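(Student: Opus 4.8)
The plan is to treat the three assertions in turn, using throughout the explicit isomorphism criterion for bundles defined by automorphy factors (a nowhere-vanishing holomorphic gauge $\Xi\colon \CC \to \CC^\ast$ with $\upsilon_1'' = \Xi(z+1)\upsilon_1'\Xi(z)^{-1}$ and $\upsilon_\tau'' = \Xi(z+\tau)\upsilon_\tau'\Xi(z)^{-1}$) together with the quasi-periodicity relations (\ref{E:ThetaFunctionsTransfRules}) and the location of the zero of $\vartheta[a,b]$ supplied by Proposition \ref{P:Basics}. Each step reduces to matching two explicit quasi-periodicity patterns, so the work is essentially bookkeeping once the right gauges and sections are written down.

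For the first statement I would exhibit the gauge $\Xi(z) = \exp(-2\pi i a z)$: since $\Xi(z+1)/\Xi(z) = \exp(-2\pi i a)$ and $\Xi(z+\tau)/\Xi(z) = \exp(-2\pi i a\tau)$, conjugation by $\Xi$ transports the constant factor $\bigl(\exp(2\pi i a), \exp(-2\pi i b)\bigr)$ to $\bigl(1, \exp(-2\pi i(a\tau+b))\bigr) = \bigl(1, \exp(-2\pi i \xi)\bigr)$, giving the first isomorphism. For the identification with $\kO([0]-[\xi])$ I would write down the meromorphic section $s(z) = \vartheta(z)/\vartheta(z-\xi)$ and check, using the $a=b=\tfrac{1}{2}$ case of (\ref{E:ThetaFunctionsTransfRules}), that $s(z+1) = s(z)$ and $s(z+\tau) = \exp(-2\pi i\xi)s(z)$; since $\vartheta$ has a single simple zero at the origin modulo $\Lambda$, the divisor of $s$ is $[0]-[\xi]$, which is exactly the claim.

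For the second statement I would verify directly from (\ref{E:ThetaFunctionsTransfRules}) (with $a=b=0$) that $f(z) = \vartheta[0,0](z+\xi\,|\,\tau)$ satisfies $f(z+1) = f(z)$ and $f(z+\tau) = \exp(-2\pi i\xi)\varphi(z) f(z)$, so that it is a global holomorphic section of $\kL\bigl(1, \exp(-2\pi i\xi)\varphi(z)\bigr)$. By Proposition \ref{P:Basics} this section has a unique simple zero at $p_\xi = \sigma - \xi$, whence the bundle is $\kO([p_\xi])$, of degree one; Riemann--Roch on the elliptic curve $E$ then forces its space of holomorphic sections to be one-dimensional, so $f$ generates it.

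For the third statement I would first compute degrees multiplicatively: the tensor product of bundles multiplies automorphy factors, so $\kL_{k,\xi} \cong \kL\bigl(1,\exp(-2\pi i\xi)\bigr) \otimes \kL\bigl(1,\varphi(z)\bigr)^{\otimes k}$, which by the first two parts is $\kO([0]-[\xi]) \otimes \kO([\sigma])^{\otimes k}$, of degree $k$. Since $\xi \mapsto \kO([0]-[\xi])$ is the standard identification of $E$ with $\Pic^0(E)$, the bundles $\kL_{k,\xi}$ with $\xi = a\tau+b$, $a,b \in \RR/\ZZ$, exhaust $\Pic^k(E)$. To produce the basis I would check, exactly as in the second part, that each $h_j(z) = \vartheta[\tfrac{j-1}{k},0](kz+\xi\,|\,k\tau)$ obeys the quasi-periodicity of $\kL_{k,\xi}$ (the shift $z\mapsto z+1$ multiplies the argument by $k$ and produces $\exp(2\pi i(j-1)) = 1$, while $z\mapsto z+\tau$ reproduces $\exp(-2\pi i\xi)\varphi(z)^k$), and read off linear independence from the Fourier expansion in $z$: the exponent $mk+(j-1)$ of $\exp(2\pi i z)$ occurring in $h_j$ lies in the residue class $j-1 \bmod k$, and these classes are distinct for $1 \le j \le k$, so the sections have disjoint frequency supports. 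As $\dim \mathsf{W}_{k,\xi} = k$ by Riemann--Roch, the $k$ independent sections $h_1,\dots,h_k$ form a basis. The steps are individually routine; the only place demanding care is this last one, where I must simultaneously pin down the degree, confirm that the parametrization by $\xi$ is genuinely exhaustive, and organize the theta-series so that the disjoint-frequency argument is clean — points prone to sign or normalization slips rather than serious obstacles.
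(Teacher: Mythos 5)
Your proof is correct. Note that the paper itself offers no proof of this proposition: it is recalled as a collection of well-known facts with a pointer to Mumford's Tata lectures and to Burban--Kreu\ss{}ler, so there is no internal argument to compare against. What you have written is precisely the standard argument from those sources, and all the steps check out: the gauge $\Xi(z) = \exp(-2\pi i a z)$ does carry the constant factor $\bigl(\exp(2\pi i a), \exp(-2\pi i b)\bigr)$ to $\bigl(1, \exp(-2\pi i \xi)\bigr)$; the meromorphic section $\vartheta(z)/\vartheta(z-\xi)$ has the claimed quasi-periods and divisor $[0]-[\xi]$; the section $\vartheta[0,0](z+\xi\,|\,\tau)$ transforms by $\exp(-2\pi i\xi)\varphi(z)$ and vanishes exactly at $p_\xi = \sigma - \xi$ by Proposition \ref{P:Basics}; and in the last part the factorization $\kL_{k,\xi} \cong \kO([0]-[\xi]) \otimes \kO([\sigma])^{\otimes k}$ pins down the degree, the quasi-periodicity check for the $h_j$ is the right computation (the $z \mapsto z+1$ shift moves the theta argument by the integer $k$, the $z \mapsto z+\tau$ shift moves it by one period $k\tau$ of the lattice $\ZZ + k\tau\ZZ$), and the disjoint-frequency-support argument (nonzero Fourier modes of $h_j$ sitting in the residue class $j-1 \bmod k$) combined with Riemann--Roch gives a clean proof that $(h_1,\dots,h_k)$ is a basis.
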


\begin{remark} In what follows, we shall denote 
$
\kS:= \kL(-1, -1) \cong \kO\bigl([0] -[\sigma]\bigr) 
$
and $$\kL_{k, \xi}^\sharp := \kL_{k, \xi} \otimes \kS = \kL(-1, - \exp(-2\pi i \xi)\varphi(z)^k\bigr) \cong \kL_{k, \xi^\sharp}$$ with $\xi^\sharp = \xi + \sigma$.
\end{remark}

\begin{remark} The  space of \emph{smooth} global sections of  $\kL_{k, \xi}$ has the following description: 
\begin{equation}\label{E:SmoothSections}
\mathsf{B}_{k, \xi} := \left\{
\CC \stackrel{f}\lar \CC \left| \, \begin{array}{l}
f \; \mbox{\rm is smooth}, f(z+1) = f(z) \\
f(z + \tau) = \exp(-2\pi i \xi) \varphi(z)^k f(z)
\end{array}
\right.
\right\}.
\end{equation}
In a similar vain, the space of smooth global sections of the line bundle $\kL_{k, \xi}^\sharp$ is
\begin{equation*}
\mathsf{B}_{k, \xi}^\sharp := \left\{
\CC \stackrel{f}\lar \CC \left| \, \begin{array}{l}
f \; \mbox{\rm is smooth}, f(z+1) = -f(z) \\
f(z + \tau) = -\exp(-2\pi i \xi) \varphi(z)^k f(z)
\end{array}
\right.
\right\}.
\end{equation*}
\end{remark}

\smallskip
\noindent
Let $z = x + \tau y$, where $x, y\in \RR$. For any $k \in \ZZ$
and $\xi = a \tau + b$, we consider the function
\begin{equation}\label{E:MetricLB1}
h(z) = h_{k, \xi}(x, y) := \exp(- 2 \pi kty^2  - 4 \pi a ty).
\end{equation}

\begin{lemma}\label{L:MetricOnLB}
For any $f \in \mathsf{B}_{k, \xi}$ or $f \in \mathsf{B}_{k, \xi}^\sharp$, we have a smooth function
\begin{equation}\label{E:NormSection}
E \stackrel{\eta_f}\lar {\RR_{\geqslant0}}, \; z  \mapsto \eta_f (z) = \lVert f(z) \rVert^2 := |f(z)|^2 \,h(z).
\end{equation}
In this way, the complex line bundles $\kL_{k, \xi}$ and $\kL_{k, \xi}^\sharp$ on $E$ get equipped with the corresponding  hermitian metrics. In particular, we obtain hermitian scalar products on the spaces of smooth global sections  $\mathsf{B}_{k, \xi}$ and $\mathsf{B}_{k, \xi}^\sharp$ given by the formula
\begin{equation}\label{E:scalarproduct}
\langle f, g \rangle := \iint\limits_{[0, 1]^2} \exp(-2\pi k ty^2 - 4 \pi aty)
f(x, y) \overline{g(x,y)}   dx \wedge dy.
\end{equation}
\end{lemma}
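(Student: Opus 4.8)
The plan is to reduce the entire statement to a single periodicity computation. Everything hinges on showing that the non-negative function $z \mapsto |f(z)|^2 h(z)$ on $\CC$ is invariant under the lattice $\Lambda = \langle 1, \tau\rangle$, so that it descends to a well-defined function $\eta_f$ on $E$; once this is known, smoothness of $\eta_f$ is automatic (it is the product of the smooth function $f$ with the real-analytic Gaussian $h$), and non-negativity is clear since $h > 0$. First I would record the automorphy factor $\upsilon_1(z) = 1$, $\upsilon_\tau(z) = \exp(-2\pi i \xi)\varphi(z)^k$ of $\kL_{k,\xi}$, so that $f \in \mathsf{B}_{k,\xi}$ means precisely $f(z+1) = f(z)$ and $f(z+\tau) = \upsilon_\tau(z) f(z)$. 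The invariance of $\eta_f$ under $z \mapsto z+1$ is then immediate: writing $z = x + \tau y$, the coordinate $y$ is unchanged, so $h(z+1) = h(z)$ by (\ref{E:MetricLB1}), while $|f(z+1)|^2 = |f(z)|^2$.

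The crux is invariance under $z \mapsto z + \tau$, which shifts $y \mapsto y+1$. Here I would compute the two competing factors. On the one hand, $|f(z+\tau)|^2 = |\upsilon_\tau(z)|^2 |f(z)|^2$, and a direct evaluation using $\xi = a\tau + b$ and $z = x + \tau y$ (so that $\mathfrak{Im}(z) = ty$) gives $|\exp(-2\pi i \xi)|^2 = \exp(4\pi a t)$ and $|\varphi(z)^k|^2 = \exp(2\pi k t + 4\pi k t y)$, hence $|\upsilon_\tau(z)|^2 = \exp(4\pi a t + 2\pi k t + 4\pi k t y)$. On the other hand, expanding $h(z+\tau) = \exp(-2\pi k t(y+1)^2 - 4\pi a t (y+1))$ produces exactly the reciprocal of this factor times $h(z)$. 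The point I want to emphasize is that the Gaussian weight $h$ is engineered precisely so that the $y$-linear and constant terms cancel and only $-2\pi k t y^2 - 4\pi a t y$ survives; this is the one genuine calculation, and the only place where the specific shape of $h$ in (\ref{E:MetricLB1}) is used. Equivalently, it verifies the defining relation $h(z+\lambda) = |\upsilon_\lambda(z)|^{-2} h(z)$ of a hermitian metric on $\kL_{k,\xi}$.

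Having established this, the hermitian metric itself follows by polarization: the pointwise pairing $f(z)\overline{g(z)} h(z)$ satisfies the same periodicity (the automorphy factors of $f$ and of $\overline{g}$ multiply to $|\upsilon_\lambda(z)|^2$, which is killed by the transformation of $h$), so it descends to a smooth hermitian form on the fibres of $\kL_{k,\xi}$. Integrating this pointwise product over the fundamental domain $\{x + \tau y : (x,y) \in [0,1]^2\}$ with respect to $dx \wedge dy$ then yields the scalar product (\ref{E:scalarproduct}). Finally, the case $f \in \mathsf{B}_{k,\xi}^\sharp$ (and the bundle $\kL_{k,\xi}^\sharp = \kL_{k,\xi} \otimes \kS$) is handled by the identical argument: the extra signs in the quasi-periodicity relations, $f(z+1) = -f(z)$ and $f(z+\tau) = -\upsilon_\tau(z) f(z)$, square to $1$ in $|f|^2$ and hence do not affect any of the cancellations above. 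I expect no real obstacle beyond organizing the $z \mapsto z+\tau$ computation cleanly.
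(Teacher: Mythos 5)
Your proposal is correct and follows essentially the same route as the paper: the paper's proof likewise reduces everything to the two periodicity facts $h(x+1,y)=h(x,y)$ and $h(x,y+1)=\exp\bigl(-2\pi t(2ky+2a+k)\bigr)h(x,y)$, combined with the quasi-periodicity of $f$, which is exactly your cancellation computation for $z\mapsto z+\tau$ (and your treatment of smoothness, polarization, and the $\kL_{k,\xi}^\sharp$ case just spells out what the paper leaves implicit).
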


\begin{proof} The result follows from the facts that 
$$
h(x+1, y) = h(x,y) \; \mbox{and} \; h(x, y+1) = \exp(- 2 \pi t(2 ky + 2 a + k)) h(x, y)
$$ 
combined with the quasi-periodicity properties of $f$. \end{proof}

\begin{remark} In a standard manner, we can also introduce the Hilbert space of square integrable global sections of the Hermitian line bundle $\kL_{k, \xi}$: 
\begin{equation}\label{E:Hilbert space}
\mathsf{H}_{k, \xi} := \left\{
\CC \stackrel{f}\lar \CC \left| \, \begin{array}{l}
 f(z+1) = f(z), f(z + \tau) = \exp(-2\pi i \xi) \varphi(z)^k f(z) \\
\iint\limits_{[0, 1]^2} \exp(-2\pi k ty^2 - 4 \pi aty)
\big|f(x, y)\big|^2 dx \wedge dy < \infty
\end{array}
\right.
\right\}.
\end{equation}
\end{remark}

\begin{lemma} We write $z = u + i v \in \CC$ with $u, v \in \RR$ and $\partial = \frac{1}{2}(\partial_u - i \partial_v)$, $\bar\partial = \frac{1}{2}(\partial_u + i \partial_v)$. Then we have a $\CC$-linear map $\bar\partial: \mathsf{B}_{k, \xi} \lar 
\mathsf{B}_{k, \xi}$, which admits an adjoint operator given by the formula
$\bar\partial^\ast = - \left(\partial + \dfrac{\partial(h)}{h}\right)$. The same result is true for the space $\mathsf{B}_{k, \xi}^\sharp$.
\end{lemma}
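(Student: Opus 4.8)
The plan is to verify two independent facts: that $\bar\partial$ maps $\mathsf{B}_{k,\xi}$ into itself, and that the stated operator is its formal adjoint with respect to the scalar product (\ref{E:scalarproduct}). For the first, the crucial point is that the automorphy factor $\upsilon_\tau(z) = \exp(-2\pi i\xi)\varphi(z)^k$ defining $\mathsf{B}_{k,\xi}$ is \emph{holomorphic}, so $\bar\partial\upsilon_\tau = 0$. Since $\bar\partial$ commutes with the translations $z\mapsto z+1$ and $z\mapsto z+\tau$, applying it to the quasi-periodicity relations of $f\in\mathsf{B}_{k,\xi}$ shows at once that $\bar\partial f$ obeys the same relations, hence $\bar\partial f\in\mathsf{B}_{k,\xi}$. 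Before computing the adjoint I would also check that $\bar\partial^\ast g := -\bigl(\partial + \tfrac{\partial(h)}{h}\bigr)g$ lies in $\mathsf{B}_{k,\xi}$: writing $z = x+\tau y$ one finds $\tfrac{\partial(h)}{h} = 2\pi i(ky + a)$ and $\tfrac{\partial\upsilon_\tau}{\upsilon_\tau} = -2\pi i k$, so the two anomalous terms $\pm 2\pi i k$ generated under $z\mapsto z+\tau$ by $\partial g$ and by $\tfrac{\partial(h)}{h}g$ cancel, restoring the correct transformation law.

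For the adjoint identity itself I would compute the defect $\langle\bar\partial f, g\rangle - \langle f, \bar\partial^\ast g\rangle$ directly. Using $\overline{\partial g} = \bar\partial\bar g$ together with the reality of $h$ (whence $\overline{\partial(h)/h} = \bar\partial(h)/h$), the three resulting terms assemble, by the Leibniz rule, into a single total derivative:
\[
\langle\bar\partial f, g\rangle - \langle f, \bar\partial^\ast g\rangle = \iint\limits_{[0,1]^2}\bar\partial\bigl(h\, f\,\bar g\bigr)\,dx\wedge dy .
\]
Everything therefore reduces to showing that the integral of this total $\bar\partial$-derivative over the fundamental domain vanishes.

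The structural input here is that $P := h f\bar g$ descends to a smooth function on $E$. It is $1$-periodic because $h$, $f$ and $\bar g$ each are; and under $z\mapsto z+\tau$ the factor $|\upsilon_\tau|^2$ acquired by $f\bar g$ is exactly cancelled by the relation $h(z+\tau)\,|\upsilon_\tau(z)|^2 = h(z)$, which is precisely the compatibility making $\lVert f\rVert^2 = |f|^2 h$ well defined on $E$ in Lemma~\ref{L:MetricOnLB}. Because the \emph{linear} coordinate change $z = x+\tau y$ writes $\bar\partial$ as a constant-coefficient combination of $\partial_x$ and $\partial_y$, and $dx\wedge dy$ is the flat (constant-density) measure, Fubini and the double periodicity of $P$ give $\iint_{[0,1]^2}\partial_x P = \iint_{[0,1]^2}\partial_y P = 0$, and hence $\iint_{[0,1]^2}\bar\partial P = 0$. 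This establishes the adjoint formula on $\mathsf{B}_{k,\xi}$.

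Finally, the case of $\mathsf{B}_{k,\xi}^\sharp$ requires no new work: although $f$ and $g$ now each pick up an extra minus sign under both $z\mapsto z+1$ and $z\mapsto z+\tau$, these signs cancel in the product $f\bar g$, so $P = h f\bar g$ is again a genuine function on $E$ and the boundary terms vanish as before. The one genuinely delicate step is this vanishing of the boundary contributions, which rests entirely on the metric compatibility of Lemma~\ref{L:MetricOnLB}; the remainder is bookkeeping between the two coordinate systems $z = u+iv$ and $z = x+\tau y$.
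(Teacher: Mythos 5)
Your proof is correct and follows essentially the same route as the paper: holomorphy of the automorphy factor gives $\bar\partial$-invariance of $\mathsf{B}_{k,\xi}$, the adjoint defect collapses by the Leibniz rule to $\iint_{[0,1]^2}\bar\partial\bigl(h f\bar g\bigr)\,dx\wedge dy$, and this vanishes because $hf\bar g$ descends to $E$. The only difference is that you spell out the two steps the paper dismisses as ``straightforward''/``easily shown'' (the anomaly cancellation showing $\partial + \partial(h)/h$ preserves $\mathsf{B}_{k,\xi}$, and the periodicity argument killing the boundary terms), which is a faithful completion rather than a different approach.
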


\begin{proof} Since $\bar\partial(\varphi) = 0$, the differential operator $\bar\partial$ is indeed an endomorphism of $\mathsf{B}_{k, \xi}$. It is a straightforward computation that $\partial + \dfrac{\partial(h)}{h}$ also maps $\mathsf{B}_{k, \xi}$ to itself. Finally, it remains to check that 
\begin{equation}\label{E:equationadjoint}
\big\langle \bar\partial f, g \big\rangle + \Big\langle f, \left(\partial + \dfrac{\partial(h)}{h}\right) g \Big\rangle = 0
\end{equation}
for all $f, g \in \mathsf{B}_{k, \xi}$. Since $\overline{\partial g} = \bar\partial \bar{g}$ and $h$ is real-valued, the left hand side of (\ref{E:equationadjoint}) takes the form
$$
\iint\limits_{[0, 1]^2} h  \bigl(\bar{\partial} f) \bar{g} \, dx \wedge dy + 
\iint\limits_{[0, 1]^2} h f \overline{\left(\partial(g) + g \dfrac{\partial(h)}{h} \right)} dx \wedge dy  = 
\iint\limits_{[0, 1]^2} \bar{\partial}\bigl(h f \bar{g}\bigr) dx \wedge dy.
$$
The latter integral can easily be shown to be zero. 
\end{proof}

\smallskip
\noindent
\textbf{Postulate}. The quantum mechanical motion of a charged particle on the torus
$E$ in the presence of a  constant magnetic field of strength $k \in \ZZ$  is described (after an appropriate rescaling) by the following  self-adjoint  operator 
\begin{equation}\label{E:MagneticSchroedingerTorus}
\Delta^{(k)} := - \left(\partial + \dfrac{\partial(h_{k, \xi})}{h_{k, \xi}}\right) \bar{\partial}: \; \mathsf{H}_{k, \xi} \lar \mathsf{H}_{k, \xi};
\end{equation}
see for instance \cite[Proposition 2]{Prieto} and the references therein.

\begin{remark} 
The  magnetic Schr\"odinger operator $\Delta^{(k)}$ is explicitly given by the formula  $$\Delta^{(k)}  = -\dfrac{1}{4}\bigl(\partial_u^2 + \partial_v^2 \bigr) - 2\pi i \dfrac{kv}{t}\bar\partial.$$ 
Moreover, it  essentially coincides with the Bochner Laplacian of the hermitian holomorphic line bundle $\bigl(\kL_{k,\xi}, h_{k, \xi}\bigr)$, see e.g.~\cite{Prieto, MaMarinescu}. For $k > 0$, the ground state of $\Delta^{(k)}$ is the space of holomorphic sections of the line bundle $\kL_{k, \xi}$. By Proposition \ref{P:LibeBundlesTorus}, the ground state of $\Delta^{(k)}$  is $k$-dimensional with a distinguished basis given by (\ref{E:BasisThetaFunctions}). Since 
$\bigl[\bar\partial, \bar\partial^\ast\bigr] = \dfrac{\pi k}{t}$, it follows that the spectrum of $\Delta^{(k)}$ is the set $\left\{ \left. \dfrac{2\pi k}{t} n \, \right| \,  n \in \NN_0 \right\}$.
\end{remark}

\begin{lemma}\label{L:MagneticTranslations}
We have a pair of \emph{unitary} operators $T_1, T_2: \mathsf{H}_{k, \xi} \lar \mathsf{H}_{k, \xi}$ (called \emph{magnetic translations}) given for any $f \in \mathsf{H}_{k, \xi}$ by the formulae:
\begin{equation}\label{E:MagneticTranslationsFormulae}
\left\{
\begin{array}{l}
\bigl(T_1(f)\bigr)(z) = f\Bigl(z + \dfrac{1}{k}\Bigr) \\
\bigl(T_2(f)\bigr)(z) = \exp\Bigl(\dfrac{2\pi i \xi + \pi i \tau}{k}\Bigr)\exp(2\pi iz )f\Bigl(z + \dfrac{\tau}{k}\Bigr).
\end{array}
\right.
\end{equation}
These operators satisfy the following relations:
\begin{equation}\label{E:RelationsTranslations}
\left\{
\begin{array}{l}
T_1^k = \mathsf{Id} = T_2^k \\
T_1 T_2 = q T_2 T_1,
\end{array}
\right.
\end{equation}
where $q = \exp\bigl(\frac{2\pi i}{k}\bigr)$. Moreover, for $k > 0$ the linear operators $T_1, T_2: \mathsf{W}_{k, \xi} \lar \mathsf{W}_{k, \xi}$ are given in  by the  matrices:
\begin{equation}\label{E:HeisenbergRepresentation}
\bigl[T_1\bigr] = \left(
\begin{array}{cccc}
1 & 0 & \dots & 0 \\
0 & q & \dots & 0 \\
\vdots & \vdots& \ddots & \vdots \\
0 & 0 & \dots & q^{k-1}
\end{array}
\right) \quad \mbox{\rm and} \quad
\bigl[T_2\bigr] = \left(
\begin{array}{cccc}
0 & \dots & 0 & 1 \\
1 & \dots & 0 & 0 \\
\vdots & \ddots& \vdots & \vdots \\
0 & \dots & 1 & 0
\end{array}
\right)
\end{equation}
with respect to the distinguished basis $(h_1, \dots, h_k)$ of $\mathsf{W}_{k, \xi}$ given by
(\ref{E:BasisThetaFunctions}).
\end{lemma}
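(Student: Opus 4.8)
The plan is to verify the four assertions in turn; all the work is elementary bookkeeping with the automorphy factor $\varphi(z)=\exp(-\pi i\tau-2\pi i z)$ and the weight $h=h_{k,\xi}$. First I would check that $T_1,T_2$ map $\mathsf{H}_{k,\xi}$ to itself, i.e.\ preserve the two quasi-periodicity conditions in \eqref{E:Hilbert space}. Invariance under $z\mapsto z+1$ is immediate, the shift $\tfrac1k$ (resp.\ $\tfrac\tau k$) commuting with $z\mapsto z+1$ while the prefactor of $T_2$ only contributes $\exp(2\pi i)=1$. For $z\mapsto z+\tau$ the inputs are $\varphi(z+\tfrac1k)=\exp(-\tfrac{2\pi i}{k})\varphi(z)$ and $\varphi(z+\tfrac\tau k)=\exp(-\tfrac{2\pi i\tau}{k})\varphi(z)$; thus $\varphi(z+\tfrac1k)^k=\varphi(z)^k$ settles $T_1$, while for $T_2$ the factor $\varphi(z+\tfrac\tau k)^k=\exp(-2\pi i\tau)\varphi(z)^k$ is cancelled exactly by the $\exp(2\pi i(z+\tau))$ produced by its prefactor. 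Since both operators are composed of a holomorphic translation and multiplication by the entire function $\exp(2\pi i z)$, they automatically restrict to the holomorphic subspace $\mathsf{W}_{k,\xi}$.

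Next I would establish unitarity against \eqref{E:scalarproduct}. Writing $z=x+\tau y$, the translation in $T_1$ shifts only $x\mapsto x+\tfrac1k$, leaving $h$ (a function of $y$ alone) invariant; with the $1$-periodicity of $h\,f\bar g$ in $x$ this gives $\langle T_1 f,T_1 g\rangle=\langle f,g\rangle$. For $T_2$ the shift is $y\mapsto y+\tfrac1k$, and the crux is that $|C\exp(2\pi i z)|^2=\exp(\tfrac{-4\pi at-2\pi t}{k})\exp(-4\pi ty)$ equals precisely the ratio $h(y+\tfrac1k)/h(y)$; hence $\langle T_2 f,T_2 g\rangle$ becomes the weighted integrand evaluated at $y+\tfrac1k$, and the $1$-periodicity of $h\,f\bar g$ in $y$ (coming from the quasi-periodicity of $f,g$ and the transformation rule for $h$ established in the proof of Lemma \ref{L:MetricOnLB}) closes the argument. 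This is where the normalizing constant $C$ first earns its keep, and I expect the careful matching of these three exponential factors to be the most error-prone part of the computation.

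For the relations, $T_1^k=\mathsf{Id}$ is clear, as $k$ iterations of the shift $\tfrac1k$ give $f(z+1)=f(z)$. Iterating $T_2$ yields $(T_2^m f)(z)=C^m\exp(\pi i\tau\tfrac{m(m-1)}{k})\exp(2\pi i m z)f(z+\tfrac{m\tau}{k})$; at $m=k$, applying the $z\mapsto z+\tau$ rule for $f$ together with $C^k=\exp(2\pi i\xi+\pi i\tau)$ makes every factor cancel, so $T_2^k=\mathsf{Id}$. The braiding relation is quickest: substituting the definitions, $T_1T_2$ and $T_2T_1$ differ only by evaluating $\exp(2\pi i z)$ at $z+\tfrac1k$ versus $z$, producing the scalar $q=\exp(\tfrac{2\pi i}{k})$ and hence $T_1T_2=qT_2T_1$.

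Finally, on $\mathsf{W}_{k,\xi}$ I would compute the action on the basis \eqref{E:BasisThetaFunctions}. The first quasi-periodicity rule of Proposition \ref{P:Basics}, applied to the characteristic $a=\tfrac{j-1}{k}$, gives $h_j(z+\tfrac1k)=\exp(\tfrac{2\pi i(j-1)}{k})h_j(z)=q^{\,j-1}h_j(z)$, which is the diagonal $[T_1]$. For $T_2$ the essential input is the characteristic-shift identity $\vartheta[\tfrac{j-1}{k},0](w+\tau\,|\,k\tau)=\exp(-\tfrac{\pi i\tau}{k}-\tfrac{2\pi i w}{k})\,\vartheta[\tfrac{j}{k},0](w\,|\,k\tau)$, proved by completing the square in the defining series (the subtlety being that shifting the argument by $\tau=\tfrac1k(k\tau)$ raises the first characteristic by $\tfrac1k$, not by an integer). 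With $w=kz+\xi$ the prefactor $C\exp(2\pi i z)$ cancels this exponential exactly, giving $T_2 h_j=h_{j+1}$ for $1\le j\le k-1$ and $T_2 h_k=\vartheta[0,0](kz+\xi\,|\,k\tau)=h_1$ since $\vartheta[1,0]=\vartheta[0,0]$; this is precisely the cyclic permutation matrix $[T_2]$. I regard this characteristic-shift identity as the only genuinely non-routine step, the remainder being the exponential bookkeeping already encountered in the unitarity argument.
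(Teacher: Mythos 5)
Your proposal is correct and follows essentially the same route as the paper: the paper declares the endomorphism and relation checks ``straightforward'', proves unitarity by exactly your observation that $\eta_{T_1(f)}(x,y)=\eta_f\bigl(x+\tfrac1k,y\bigr)$ and $\eta_{T_2(f)}(x,y)=\eta_f\bigl(x,y+\tfrac1k\bigr)$ (your matching of $|C\exp(2\pi i z)|^2$ with the ratio $h(y+\tfrac1k)/h(y)$ is precisely the content of that identity), and then reads off the matrices from theta-function transformation rules. Your one genuine addition is making explicit the fractional characteristic-shift identity behind $T_2 h_j = h_{j+1}$ — needed because the shift by $\tau=\tfrac1k(k\tau)$ is not a lattice shift for the modulus $k\tau$ — which the paper subsumes, somewhat loosely, under its citation of (\ref{E:ThetaFunctionsTransfRules}).
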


\begin{proof} It is a straightforward check that $T_1$ and $T_2$ are endomorphisms of  $\mathsf{H}_{k, \xi}$ and satisfy the relations (\ref{E:RelationsTranslations}). To prove their unitarity, observe that for any $f \in \mathsf{H}_{k, \xi}$ we have:
$$
\left\{\begin{array}{ccc}
\eta_{T_1(f)}(x, y) = \eta_f\left(x + \dfrac{1}{k}, y\right) \\
\eta_{T_2(f)}(x, y) = \eta_f\left(x, y + \dfrac{1}{k}\right),
\end{array}
\right.
$$
where the function $\eta_f: E \lar \RR$ is given by (\ref{E:NormSection}). It follows that
$$
\big\lVert T_l(f) \big\rVert = \iint\limits_{\RR^2/\ZZ^2} \eta_{T_l(f)} dx \wedge dy = \iint\limits_{\RR^2/\ZZ^2} \eta_{f} dx \wedge dy  = \lVert f \rVert
$$
for $l = 1, 2$, implying  that the operator $T_l$ is indeed unitary. The formulae for the action of $T_1$ and $T_2$ on the basis $(h_1, \dots, h_k)$ of $\mathsf{W}_{k, \xi}$
follow from  (\ref{E:ThetaFunctionsTransfRules}). 
\end{proof}

\begin{remark}
It is easy to see that $\bigl[\bar\partial, T_l\bigr] = 0$ for $l = 1, 2$. Since the operator $T_l$ is unitary, it also commutes with $\bar\partial^\ast$ and, as a consequence, with the magnetic Schr\"odinger operator $\Delta^{(k)}$ given by (\ref{E:MagneticSchroedingerTorus}).
\end{remark}

\begin{definition}\label{D:HeisenbergGroup} For any $k \in \NN$, let  $\Gamma_k = \bigl\langle \exp(\frac{2\pi i}{k}) \bigr\rangle \subset \CC^\ast$.
The Heisenberg group $G_k$ is the set $(\ZZ_k \times \ZZ_k) \times \Gamma_k$ equipped with the group law given by the rule
\begin{equation}
\bigl(([a_1], [b_1]), \gamma_1\bigr) \cdot \bigl(([a_2], [b_2]), \gamma_2\bigr) = 
\left(\bigl([a_1 + a_2], [b_1 + b_2]\bigr), \exp\left(\frac{2\pi i}{k} a_1 b_2\right) \gamma_1 \gamma_2\right).
\end{equation}
Note that $G_k$ is a central extension of $\ZZ_k \times \ZZ_k$ by $\Gamma_k$.
Moreover, for any $n \in \NN$ we have a surjective group homomorphism
\begin{equation}
G_{kn} \rightarrowdbl G_k, \quad 
\bigl((a \; \mod \; kn, b \; \mod \; kn), \gamma\bigr) \mapsto \bigl((a \; \mod \; k, b\;  \mod \; k), \gamma^n\bigr).
\end{equation}
\end{definition}

\begin{lemma} For any $k \in \NN$, the assignment  
\begin{equation}\label{E:ActionHeisenberg}
\bigl((\bar{1}, \bar{0}), 1 \bigr) \mapsto T_1,
\quad 
\bigl((\bar{0}, \bar{1}), 1 \bigr)
\mapsto T_2 \quad \mbox{\rm and} \quad 
\bigl((\bar{0}, \bar{0}), \gamma \bigr)
 \mapsto \mathsf{mult}_\gamma
\end{equation}
defines  a unitary representation of the Heisenberg group $G_k$ on  the Hilbert space $\mathsf{H}_{k, \xi}$ and its finite dimensional  subspace $\mathsf{W}_{k, \xi}$.  Moreover, the  representation of $G_k$ on 
$\mathsf{W}_{k, \xi}$ is irreducible.
\end{lemma}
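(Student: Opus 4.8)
The plan is to verify the three assertions in turn: that the assignment extends to a genuine group homomorphism out of $G_k$, that the resulting operators are unitary, and that the restriction to $\mathsf{W}_{k, \xi}$ admits no proper invariant subspace.

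For the first point, I would exploit the fact that every element of $G_k = (\ZZ_k \times \ZZ_k) \times \Gamma_k$ is uniquely of the form $\bigl(([a], [b]), \gamma\bigr)$, and set $\varrho\bigl(([a], [b]), \gamma\bigr) := \gamma\, T_2^{b} T_1^{a}$. This is well defined independently of the chosen integer representatives $a, b$ because $T_1^k = \mathsf{Id} = T_2^k$ by \eqref{E:RelationsTranslations}. To check that $\varrho$ is a homomorphism, I would reduce the product $\varrho(g_1)\varrho(g_2) = \gamma_1 \gamma_2\, T_2^{b_1} T_1^{a_1} T_2^{b_2} T_1^{a_2}$ to normal form by moving $T_1^{a_1}$ past $T_2^{b_2}$ using the commutation relation $T_1 T_2 = q T_2 T_1$, which yields $T_1^{a_1} T_2^{b_2} = q^{a_1 b_2} T_2^{b_2} T_1^{a_1}$; the emerging scalar $q^{a_1 b_2}$ is exactly the cocycle $\exp\bigl(\frac{2\pi i}{k} a_1 b_2\bigr)$ appearing in the group law of $G_k$. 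The only delicate part here is bookkeeping: one must order the factors and orient the commutation relation so that the power of $q$ produced matches the cocycle and not its inverse.

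Unitarity is then immediate: $T_1$ and $T_2$ are unitary by Lemma \ref{L:MagneticTranslations}, every $\gamma \in \Gamma_k$ has modulus one so $\mathsf{mult}_\gamma$ is unitary, and a product of unitary operators is unitary. That $\mathsf{W}_{k, \xi}$ is a (finite-dimensional) invariant subspace follows because $T_1, T_2$ commute with $\bar\partial$, hence preserve $\ker(\bar\partial) = \mathsf{W}_{k, \xi}$; thus $\varrho$ restricts to a subrepresentation there.

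The substantive claim is irreducibility on $\mathsf{W}_{k, \xi}$, and here I would argue directly from the matrices \eqref{E:HeisenbergRepresentation} in the distinguished basis $(h_1, \dots, h_k)$. Since $q = \exp\bigl(\frac{2\pi i}{k}\bigr)$ is a primitive $k$-th root of unity, the operator $T_1 = \mathrm{diag}(1, q, \dots, q^{k-1})$ has $k$ \emph{distinct} eigenvalues, so each eigenline is $\CC h_j$ and every $T_1$-invariant subspace is a sum of these coordinate lines. A nonzero $G_k$-invariant subspace $U$ is in particular $T_1$-invariant, hence contains some basis vector $h_{j_0}$; as $T_2$ acts by the cyclic shift $h_j \mapsto h_{j+1}$ (indices modulo $k$), repeated application of $T_2$ to $h_{j_0}$ produces every $h_j$, forcing $U = \mathsf{W}_{k, \xi}$. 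This ``clock-and-shift'' argument is the crux; alternatively one could invoke Schur's lemma after observing that the only operators commuting with both a diagonal operator of distinct eigenvalues and a full cyclic permutation are scalars. I expect no genuine difficulty beyond the cocycle bookkeeping in the first step.
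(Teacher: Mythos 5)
Your proposal is correct, but it proves the lemma by a genuinely different route than the paper. The paper's own proof is a two-line deferral: unitarity is read off from Lemma \ref{L:MagneticTranslations}, and irreducibility is postponed to Theorem \ref{T:HeisenbergGroupRepr}, the multivariate generalization ($G_K$ acting on $\mathsf{W}_{K,\vec\xi}$), where it is established by character theory --- one observes that elements with nonzero shift part have vanishing trace, computes $(\chi,\chi)=1$, and concludes irreducibility from the standard criterion. You instead give a self-contained argument in the case at hand: the normal form $\varrho\bigl(([a],[b]),\gamma\bigr)=\gamma\,T_2^{b}T_1^{a}$ with the cocycle check (and you correctly note that the ordering $T_2^{b}T_1^{a}$, not $T_1^{a}T_2^{b}$, is what makes the emerging power $q^{a_1b_2}$ match the group law rather than its inverse), followed by the clock-and-shift irreducibility argument: since $T_1=\mathrm{diag}(1,q,\dots,q^{k-1})$ has distinct eigenvalues, any invariant subspace is a sum of coordinate lines, and the cyclic action of $T_2$ on $(h_1,\dots,h_k)$ then forces it to be everything. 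Both proofs ultimately rest on the same structural facts (a diagonal operator with distinct eigenvalues plus a transitive permutation of its eigenlines --- in the paper these appear as the decomposition $S \cong \bigoplus_{\vec{a}} X_{\vec{a}}$ into distinct characters and the rule $R_{\vec{b}}(H_{\vec{c}})=H_{\vec{c}+\vec{b}}$); yours is more elementary and immediately verifiable, while the paper's packaging via the general theorem avoids duplicating the argument when the multivariate case is needed later anyway.
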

\begin{proof} The fact that (\ref{E:ActionHeisenberg}) defines  a unitary representation of the group $G_k$ follows from 
Lemma \ref{L:MagneticTranslations}. A more general statement (including the irreducibility of $\mathsf{W}_{k, \xi}$) will be proven in Theorem \ref{T:HeisenbergGroupRepr}.
\end{proof}

\begin{remark}\label{R:MagneticTranslations}
In a similar fashion, we have  unitary operators $T_1, T_2: \mathsf{H}_{k, \xi}^\sharp \lar \mathsf{H}_{k, \xi}^\sharp$, given by the \emph{same formulae} (\ref{E:MagneticTranslationsFormulae}). They satisfy the relations 
\begin{equation}
\left\{
\begin{array}{l}
T_1^k = - \mathsf{Id} = T_2^k \\
T_1 T_2 = q T_2 T_1.
\end{array}
\right.
\end{equation}
 It follows that  $\mathsf{H}_{k, \xi}^\sharp$ (as well as  $\mathsf{W}_{k, \xi}^\sharp$) is the  representation space  of an appropriate   central extension $\widetilde{G}_k$ of the Heisenberg group $G_k$:
\begin{equation}
0 \lar Z \lar \widetilde{G}_k \lar G_k \lar 1,
\end{equation} 
where $Z = \left\langle c \, \big| \, c^2 = e\right\rangle \cong \ZZ_2$ and $c$ acts on $\mathsf{H}_{k, \xi}^\sharp$ as $-\mathsf{Id}$. 
\end{remark}

\section{Haldane--Rezayi wave functions on a torus}\label{S:HaldaneRezayi}
To define the model, we  fix two natural numbers: $n$ (the number of particles) and $m$ (this number reflects the statistic of the system and the strength of the magnetic field on $E$) and put
\begin{equation}\label{E:HRCentral1}
D_m(z_1, \dots, z_n) := \left(\prod\limits_{p < q} \vartheta(z_p - z_q)\right)^m,
\end{equation}
where $\vartheta(z)$ is the odd theta-function of Jacobi (\ref{E:ThetaOdd}). Recall that $\vartheta(z)$ has a unique simple zero at $z = 0 \; \mathsf{mod}\;  \Lambda$ (see Proposition \ref{P:Basics}). 
For any $1 \le j \le m$ and $\xi = a \tau + b$ we denote
\begin{equation}\label{E:HRCentral2}
\Phi_j^c(z_1, \dots, z_n) := \vartheta\left[\dfrac{j-1}{m}, 0\right](mw + \xi\,|\,m\tau),
\end{equation}
where $w := z_1 + \dots + z_n$. Finally, we put:
\begin{equation}\label{E:HRSpace}
\Phi_j := \Phi_j^c \cdot D_m = \vartheta\left[\dfrac{j-1}{m}, 0\right](mw + \xi\,|\,m\tau) \left(\prod\limits_{p < q} \vartheta(z_p - z_q)\right)^m
\end{equation}
and $\mathsf{V}_{m, n, \xi} := 
\left\langle \Phi_1, \dots, \Phi_m \right\rangle_{\CC}$.
Note that the functions $\Phi_j$ are symmetric for $m \in 2 \mathbb{N}$ (bosonic case) and antisymmetric for  $m \in 2 \mathbb{N}_0 +1$ (fermionic case). The space 
$\mathsf{V}_{m,  n, \xi}$ was introduced by Haldane and Rezayi 
 in  \cite{HaldaneRezayi}.  For $m, n \in \NN$ we put:
 $$
 \epsilon_{m, n} := (-1)^{m(n-1)} = 
 \left\{
\begin{array}{cl}
\, 1
& \mbox{\rm if} \, m \in 2 \NN \; \mbox{\rm or} \; m, n \in 2 \NN_0+1  \\
-1
 & \mbox{\rm if} \; m \in 2 \NN_0+1 \;  \mbox{\rm and} \;  n \in 2\NN.
\end{array}
\right. 
 $$ 
\begin{lemma}\label{L:HRwavefunction} Let $X := 
\underbrace{E \times \dots \times E}_{n  \; \mbox{\scriptsize{\sl times}}}$ and 
\begin{equation}
\kW_{m, n, \xi} := \left\{
\begin{array}{cl}
\kL_{mn,_\xi} \boxtimes \dots \boxtimes \kL_{mn, \xi}
& \mbox{\rm if} \; \epsilon_{m, n} = \; 1  \\
\kL_{mn,_\xi}^\sharp \boxtimes \dots \boxtimes \kL^\sharp_{mn, \xi}
 & \mbox{\rm if} \; \epsilon_{m, n} = -1.
\end{array}
\right. 
\end{equation}
Then the following results are  true:
$$
\mathsf{V}_{m,n,  \xi} \subset \Gamma\bigl(X, \kW_{m, n, {\xi}}\bigr) \cong
\left\{
\begin{array}{cl}
\mathsf{W}_{mn, \xi} \otimes \dots \otimes \mathsf{W}_{mn, \xi}
& \mbox{\rm if} \; \epsilon_{m, n} = \; 1  \\
\mathsf{W}_{mn, \xi}^\sharp \otimes \dots \otimes \mathsf{W}_{mn, \xi}^\sharp
 & \mbox{\rm if} \; \epsilon_{m, n} = -1.
\end{array}
\right. 
$$
In particular, $\mathsf{V}_{m, n,  \xi}$ is a subspace of the ground space of the  self-adjoint operator
\begin{equation}\label{E:manybodySchroedinger}
\Delta = \sum\limits_{j = 1}^n \Delta_j^{(mn)} = -\dfrac{1}{4} \sum\limits_{j = 1}^n \bigl(\partial_{u_j}^2 + \partial_{v_j}^2 \bigr) - 2\pi i \dfrac{mn}{t} \sum\limits_{j = 1}^n 
v_j \bar\partial_j
\end{equation} 
acting on  the Hilbert space  of global  square integrable sections of  $\kW_{m, n, {\xi}}$.
\end{lemma}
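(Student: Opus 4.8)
The plan is to verify directly that each basis function $\Phi_j$ is a holomorphic section of the line bundle $\kW_{m,n,\xi}$ by checking its quasi-periodicity separately in each variable $z_i$, and then to deduce the asserted inclusion, the isomorphism, and the ground-state property from standard facts. Using (\ref{E:SmoothSections}) and the Remark preceding it, I would first recall that a function $f(z_1,\dots,z_n)$ is a section of $\kL_{mn,\xi}\boxtimes\dots\boxtimes\kL_{mn,\xi}$ (respectively of its $\sharp$-twist) precisely when, for every $1\le i\le n$, it satisfies $f(\dots,z_i+1,\dots)=\varepsilon\, f(\dots,z_i,\dots)$ and $f(\dots,z_i+\tau,\dots)=\varepsilon\exp(-2\pi i\xi)\varphi(z_i)^{mn}f(\dots,z_i,\dots)$, with $\varepsilon=1$ (respectively $\varepsilon=-1$). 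Thus the whole computation reduces to tracking the automorphy factors of the two building blocks $\Phi_j^c$ and $D_m$ under $z_i\mapsto z_i+1$ and $z_i\mapsto z_i+\tau$.

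For the center-of-mass factor $\Phi_j^c=\vartheta[\frac{j-1}{m},0](mw+\xi\,|\,m\tau)$ I would apply the quasi-periodicity rules (\ref{E:ThetaFunctionsTransfRules}) for the parameter $m\tau$: since $z_i\mapsto z_i+1$ sends $mw+\xi\mapsto mw+\xi+m$ and $m\cdot\frac{j-1}{m}=j-1\in\ZZ$, the factor $\Phi_j^c$ is invariant; whereas $z_i\mapsto z_i+\tau$ sends $mw+\xi\mapsto mw+\xi+m\tau$ and produces the factor $\exp(-2\pi i(mw+\xi)-\pi i m\tau)$. For the Jastrow factor $D_m=(\prod_{p<q}\vartheta(z_p-z_q))^m$, each variable $z_i$ occurs in exactly $n-1$ factors; using $\vartheta(u\pm 1)=-\vartheta(u)$, $\vartheta(u+\tau)=-\varphi(u)\vartheta(u)$ and the oddness $\vartheta(-u)=-\vartheta(u)$ from Proposition \ref{P:Basics} (the oddness being needed to handle the factors $\vartheta(z_p-z_i)$ with $p<i$), I would show that under $z_i\mapsto z_i+1$ the product $\prod_{p<q}\vartheta(z_p-z_q)$ acquires $(-1)^{n-1}$, and under $z_i\mapsto z_i+\tau$ it acquires $(-1)^{n-1}\exp(-(n-1)\pi i\tau-2\pi i(nz_i-w))$, where the identity $\sum_{q\ne i}(z_i-z_q)=nz_i-w$ is used. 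Raising to the $m$-th power turns the sign into $\epsilon_{m,n}=(-1)^{m(n-1)}$.

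The decisive step is to multiply the two automorphy factors for $z_i\mapsto z_i+\tau$ and observe the cancellation of the $w$-dependence: the term $+2\pi i mw$ coming from $D_m$ exactly cancels the $-2\pi i mw$ coming from $\Phi_j^c$, while the $\tau$-exponents combine as $-\pi i m\tau-m(n-1)\pi i\tau=-\pi i mn\tau$, so that $\Phi_j=\Phi_j^c\cdot D_m$ transforms by $\epsilon_{m,n}\exp(-2\pi i\xi)\varphi(z_i)^{mn}$, which is exactly the automorphy factor of $\kL_{mn,\xi}$ when $\epsilon_{m,n}=1$ and of its $\sharp$-twist when $\epsilon_{m,n}=-1$; under $z_i\mapsto z_i+1$ one gets the sign $\epsilon_{m,n}$, matching the same bundle. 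I expect this cancellation --- forced by the interplay between the degree $mn$ of the one-particle bundle and the center-of-mass argument $mw$ --- to be the only genuinely delicate point, the rest being bookkeeping. This establishes $\Phi_j\in\Gamma(X,\kW_{m,n,\xi})$ for all $j$, hence $\mathsf{V}_{m,n,\xi}\subset\Gamma(X,\kW_{m,n,\xi})$.

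Finally, the isomorphism $\Gamma(X,\kW_{m,n,\xi})\cong\mathsf{W}_{mn,\xi}^{\otimes n}$ (respectively its $\sharp$-version) is the standard Künneth identity $\Gamma(E^n,\kL^{\boxtimes n})\cong\Gamma(E,\kL)^{\otimes n}$ for global sections of an external tensor product, combined with Proposition \ref{P:LibeBundlesTorus}. For the last assertion, since each $\Phi_j$ is holomorphic we have $\bar\partial_i\Phi_j=0$ for every $i$, hence $\Delta_i^{(mn)}\Phi_j=0$ and $\Delta\Phi_j=0$; as $\Delta=\sum_i\Delta_i^{(mn)}$ is a sum of commuting non-negative operators whose common kernel is precisely the space of holomorphic sections (by the spectral description of $\Delta^{(mn)}$ recorded after (\ref{E:MagneticSchroedingerTorus})), the space $\mathsf{V}_{m,n,\xi}$ lies in the ground space of $\Delta$.
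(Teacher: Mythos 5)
Your proof is correct and follows essentially the same route as the paper: both verify the quasi-periodicity of $\Phi_j = \Phi_j^c \cdot D_m$ variable by variable, track the two automorphy factors (with the same cancellation of the $w$-dependence and the combination of $\tau$-exponents into $\varphi(z_i)^{mn}$), and identify the resulting factor with that of $\kL_{mn,\xi}$ or $\kL^\sharp_{mn,\xi}$ according to the sign $\epsilon_{m,n}$. The only cosmetic difference is that the paper computes the shift in $z_1$ and invokes oddness of $\vartheta$ to transfer the rules to the other variables, whereas you handle a general $z_i$ directly; your explicit treatment of the K\"unneth identification and of the ground-state property merely spells out what the paper leaves implicit.
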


\begin{proof}
Note that 
$
D_m(z_1 + 1, z_2, \dots, z_n) = \epsilon_{m, n} D_m(z_1, z_2, \dots, z_n),
$
whereas 
$$
D_m(z_1 + \tau, z_2, \dots, z_n) = \epsilon_{m, n} \exp\bigl(-\pi i m(n-1) \tau - 2\pi i mn z_1 + 2\pi i mw\bigr) D_m(z_1, z_2, \dots, z_n).
$$
In a similar way, for any $1 \le j \le m$ we have: $\Phi_j^c(z_1 +1, z_2, \dots, z_n) = \Phi_j^c(z_1, z_2, \dots, z_n)$, whereas 
$
\Phi_j^c(z_1 +\tau, z_2, \dots, z_n) = \varphi(w)^m \exp(-2\pi i \xi) \Phi_j^c(z_1 +\tau, z_2, \dots, z_n).
$ It follows that
$$
\left\{
\begin{array}{lcl}
\Phi_j(z_1 + 1, z_2, \dots, z_n) & = &  \epsilon_{m, n} \Phi_j(z_1, z_2, \dots, z_n) \\
\Phi_j(z_1 + \tau, z_2, \dots, z_n) & = &  \epsilon_{m, n}  \varphi(z_1)^{mn}\cdot \exp(-2\pi i \xi) \Phi_j(z_1, z_2, \dots, z_n). 
\end{array}
\right.
$$
Since $\vartheta(z)$ is an odd function,  we get for any $1 \le k \le n$ the same  transformation rules for the function $\Phi_j$ with respect to  the shifts $z_k \mapsto z_k +1$ and $z_k \mapsto z_k + \tau$, implying the statement. 
\end{proof}

\begin{lemma}\label{L:SectionsPullback}
Consider the map $X \stackrel{\mu}\lar E, (z_1, \dots, z_n) \mapsto z_1 + \dots + z_n$. Then for any vector bundle $\kV$ on $E$,  the canonical morphism
$
\Gamma(E, \kV) \stackrel{\mu^\ast}\lar \Gamma\bigl(X, \mu^\ast(\kV)\bigr)
$
is an isomorphism. 
\end{lemma}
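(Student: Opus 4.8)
The plan is to reduce the summation map $\mu$ to a coordinate projection by means of a suitable automorphism of $X$, and then to exploit the triviality of global holomorphic functions on a compact connected variety.

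First I would introduce the morphism $X \stackrel{\psi}\lar X$ given by $\psi(z_1, \dots, z_n) = (z_1 + \dots + z_n, z_2, \dots, z_n)$. This is an automorphism of $X = E^n$ (as a complex torus), with inverse $(w, z_2, \dots, z_n) \mapsto (w - z_2 - \dots - z_n, z_2, \dots, z_n)$; concretely it is induced by the integral, unipotent matrix with $1$'s on the diagonal and along the first row, which has determinant one. By construction $\mu = \mathrm{pr}_1 \circ \psi$, where $\mathrm{pr}_1 \colon X = E \times E^{n-1} \lar E$ is the projection onto the first factor. Since $\psi$ is an isomorphism, the induced map $\psi^\ast \colon \Gamma(X, \mathrm{pr}_1^\ast \kV) \lar \Gamma(X, \mu^\ast \kV)$ is a $\CC$-linear isomorphism (using $\psi^\ast \mathrm{pr}_1^\ast \kV = (\mathrm{pr}_1 \circ \psi)^\ast \kV = \mu^\ast \kV$), and by functoriality of pullback it is compatible with the canonical morphisms, namely $\mu^\ast = \psi^\ast \circ \mathrm{pr}_1^\ast$. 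Hence it suffices to prove the assertion for $\mathrm{pr}_1$ in place of $\mu$.

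Next I would treat the projection $\mathrm{pr}_1 \colon E \times Y \lar E$ with $Y := E^{n-1}$. Each global section $s$ of $\mathrm{pr}_1^\ast \kV$ restricts, on every slice $E \times \{y\}$, to a section $s_y \in \Gamma(E, \kV)$ under the canonical identification $E \times \{y\} \cong E$. Because $\Gamma(E, \kV)$ is a finite-dimensional $\CC$-vector space ($E$ being compact and $\kV$ coherent) and $Y$ is a compact connected complex manifold, the resulting holomorphic map $Y \lar \Gamma(E, \kV)$, $y \mapsto s_y$, is constant. Therefore $s = \mathrm{pr}_1^\ast(s_0)$ for $s_0 = s_{y_0}$, which gives surjectivity of $\mathrm{pr}_1^\ast$; injectivity is immediate by restricting to a single slice. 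Equivalently and more structurally, one has $\mathrm{pr}_{1,\ast}\kO_X \cong \kO_E \otimes_\CC \Gamma(Y, \kO_Y) = \kO_E$ since $\Gamma(Y, \kO_Y) = \CC$, whereupon the projection formula yields $\mathrm{pr}_{1,\ast}\mathrm{pr}_1^\ast \kV \cong \kV \otimes \mathrm{pr}_{1,\ast}\kO_X \cong \kV$; passing to global sections then identifies $\mathrm{pr}_1^\ast$ with an isomorphism. (The degenerate case $n = 1$, where $\mu = \mathrm{id}_E$, is trivial, and $Y$ reduces to a point.)

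The only genuinely substantive point — the step I expect to be the main obstacle — is the vanishing of the dependence along $Y$: the fact that the only global holomorphic functions on the compact connected variety $Y = E^{n-1}$ are the constants, and correspondingly that a section of $\mathrm{pr}_1^\ast \kV$ cannot twist nontrivially along the fibres. This is precisely where the compactness of $E$ enters; over a non-compact base the analogous pullback map is far from surjective. I would make it precise either via the slice argument above (a holomorphic map from a compact connected complex manifold into a finite-dimensional vector space is constant) or by invoking the projection formula together with $\mathrm{pr}_{1,\ast}\kO_X = \kO_E$, whichever best matches the conventions used elsewhere in the paper. Everything else is formal bookkeeping.
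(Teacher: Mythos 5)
Your proof is correct, and it shares with the paper exactly one key ingredient: the unipotent automorphism $(z_1, \dots, z_n) \mapsto (z_1 + \dots + z_n, z_2, \dots, z_n)$ relating $\mu$ to the first-coordinate projection (the paper calls it $\varrho$, you call it $\psi$). After that the two arguments genuinely diverge. The paper does \emph{not} reduce the statement itself to the projection; it keeps working with $\mu$, notes that injectivity of $\mu^\ast$ is free because $\mu$ is surjective, and then matches dimensions: by Serre duality on $X$ and adjunction, $\bigl(\Gamma(X, \mu^\ast\kV)\bigr)^\ast \cong \Hom_E\bigl(\kV, R\mu_\ast\kO_X[n]\bigr)$, the automorphism and the K\"unneth formula give $R\mu_\ast\kO_X \cong \bigoplus_{j=0}^{n-1}\bigl(\kO_E[-j]\bigr)^{\oplus\binom{n-1}{j}}$, and since $\Hom_E(\kV, \kO_E[j])$ vanishes for $j \ge 2$ only the top summand survives, which Serre duality on $E$ identifies with $\bigl(\Gamma(E,\kV)\bigr)^\ast$. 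Your route instead proves surjectivity directly for $\mathrm{pr}_1$, using that a holomorphic map from the compact connected variety $E^{n-1}$ to the finite-dimensional space $\Gamma(E, \kV)$ is constant (equivalently, $\mathrm{pr}_{1,\ast}\kO_X \cong \kO_E$ plus the projection formula). What your approach buys is elementarity: no derived categories, no Serre duality, no Euler-characteristic bookkeeping, and it exhibits the inverse map concretely (restriction to a slice). What the paper's approach buys is reusability: the explicit computation of the full complex $R\mu_\ast\kO_X$, not just its $H^0$, is precisely the ingredient recycled in the multi-layer generalization (Lemma \ref{L:sectionsofpullback2}) and in the computation of $R\widehat{\pi}_\ast(\kF)$ in Section \ref{S:Multilayer}, so the derived-category formulation is not overhead but an investment. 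One small point in your write-up deserves a sentence of justification: the holomorphy of $y \mapsto s_y$ as a map into $\Gamma(E, \kV)$ (e.g.\ by evaluating sections at finitely many points of $E$ at which evaluation is injective on $\Gamma(E,\kV)$, or by a local trivialization); your alternative projection-formula formulation sidesteps this issue entirely and is the cleaner of your two variants.
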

\begin{proof}
Since $\mu$ is a surjective morphism, the induced linear map $\mu^\ast$ is injective. Hence, it is sufficient to show that $\dim_{\CC}\bigl(\Gamma\bigl(X, \mu^\ast(\kV)\bigr)\bigr)  = \dim_{\CC}\bigl(\Gamma(E, \kV)\bigr)$. Since $X$ is an abelian variety of dimension $n$, Serre duality implies that
$$
\bigl(\Hom_X(\kO_X, \mu^\ast \kV)\bigr)^\ast  \cong \Ext^n_X(\mu^\ast\kV , \kO_X) \cong 
\Hom_E(\kV, R\mu_\ast\kO_X[n]),
$$
where the latter $\Hom$--space is taken in the derived category of coherent sheaves of $E$. 
In order to compute the complex $R\mu_\ast\kO_X$, consider the commutative diagram 
$$
\xymatrix{
X \ar[rr]^-{\varrho} \ar[rd]_-\mu & &  X \ar[ld]^-{\pi} \\
& E & 
}
$$
where $\varrho(z_1, z_2, \dots, z_n) = (z_1 + \dots + z_n, z_2, \dots, z_n)$ and 
$\pi(z_1, z_2, \dots, z_n) = z_1$. Since $\varrho$ is an isomorphism, we have:
$
R\varrho_\ast \kO_X \cong \varrho_\ast \kO_X \cong \kO_X.
$
It follows from the K\"unneth formula that 
$$
R\mu_\ast\kO_X \cong R\pi_\ast \left(R \varrho_\ast \kO_X\right) \cong R\pi_\ast \kO_X \cong \kO_E \otimes \mathit{R\Gamma}(\widetilde{X}, \kO_{\widetilde{X}}),$$ where 
$\widetilde{X} := 
\underbrace{E \times \dots \times E}_{n-1  \; \mbox{\scriptsize{\sl times}}}$. Hence, 
$
R\mu_\ast\kO_X \cong \bigoplus\limits_{j = 0}^{n-1} 
\left(\kO_E[-j]\right)^{\oplus \binom{n-1}{j}}.
$
Since $\Hom_E(\kV, \kO_E[j]) = 0$ for $j \ge 2$, applying Serre duality once again,  we get the isomorphisms
$$
\bigl(\Hom_X(\kO_X, \mu^\ast \kV)\bigr)^\ast  \cong  \Hom_E\bigl(\kV, \kO_E[1]\bigr) \cong \bigl(\Hom_E(\kO_E, \kV)\bigr)^\ast. 
$$
Lemma is proven. 
\end{proof}

\smallskip
\noindent
For any $1 \le p < q \le n$, consider the map 
$X \stackrel{\sigma_{pq}}\lar E, (z_1, \dots, z_n) \mapsto z_p - z_q$. Then we have: 
$
\sigma_{pq}^\ast \bigl(\kO_E\bigl([0]\bigr)\bigr) \cong 
\kO_X\bigl(\bigl[\Sigma_{pq}\bigr]\bigr),
$
where
$
\Sigma_{pq} := \bigl\{(z_1, \dots, z_n) \in X\, \big| \, z_p = z_q\bigr\}.
$
In particular, one can view the function $\vartheta^m(z_p - z_q)$ as a global holomorphic section of the line bundle $\sigma_{pq}^\ast \bigl(\kO_E\bigl(m[0]\bigr)\bigr) 
\cong \kO_A\bigl(m\bigr[\Sigma_{pq}\bigr]\bigr)$.

\begin{lemma}\label{L:HRSpace}
We have the following description of the Haldane--Rezayi space:
\begin{equation}\label{E:Axiomatic}
\mathsf{V}_{m, n,  \xi} \cong \Gamma\bigl(X, \kW_{m, n, \xi}\big(- m \bigl[\Sigma\bigr]\bigr)\bigr),
\end{equation}
where $\Sigma = \cup_{p < q} \Sigma_{pq}$.
\end{lemma}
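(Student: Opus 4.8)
The inclusion $\mathsf{V}_{m, n, \xi}\subseteq\Gamma\bigl(X,\kW_{m,n,\xi}(-m[\Sigma])\bigr)$ is immediate from the construction: by Lemma \ref{L:HRwavefunction} each $\Phi_j=\Phi_j^c\cdot D_m$ lies in $\Gamma(X,\kW_{m,n,\xi})$, and since $\vartheta$ has a simple zero along $[0]$ (Proposition \ref{P:Basics}), the factor $D_m=\prod_{p<q}\vartheta(z_p-z_q)^m$ vanishes to order exactly $m$ along each diagonal $\Sigma_{pq}$, hence along $\Sigma$. The real content is the reverse inclusion, and the plan is to prove it not by a dimension count but by exhibiting an isomorphism of line bundles on $X$ implemented by multiplication with $D_m$.

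First I would identify the relevant line bundle. Viewing $\vartheta^m(z_p-z_q)$ as a section of $\sigma_{pq}^\ast\bigl(\kO_E(m[0])\bigr)\cong\kO_X(m[\Sigma_{pq}])$, as recorded before the statement, the product $D_m$ becomes the canonical global section of $\kO_X(m[\Sigma])=\bigotimes_{p<q}\kO_X(m[\Sigma_{pq}])$ whose divisor of zeros is precisely $m[\Sigma]$. On the other hand $\Phi_j^c$ depends only on $w=z_1+\dots+z_n$ and is the pullback under $\mu$ of the distinguished basis vector $h_j\in\mathsf{W}_{m,\xi}=\Gamma(E,\kL_{m,\xi})$; thus $\Phi_j^c\in\Gamma(X,\mu^\ast\kL_{m,\xi})$, and $\Phi_j$ is a section of the tensor product $\mu^\ast\kL_{m,\xi}\otimes\kO_X(m[\Sigma])$ with automorphy factor the product of the two. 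The key step is the line-bundle identity
\[
\kW_{m,n,\xi}\;\cong\;\mu^\ast\kL_{m,\xi}\otimes\kO_X(m[\Sigma]).
\]
This is in fact already contained in the computation proving Lemma \ref{L:HRwavefunction}: there the transformation rules of $\Phi_j$ under $z_k\mapsto z_k+1$ and $z_k\mapsto z_k+\tau$ are shown to coincide with the automorphy factor defining $\kW_{m,n,\xi}$, the sign $\epsilon_{m,n}$ and the $\sharp$-twist being accounted for by the oddness of $\vartheta$. Since $\Phi_j$ is a nonzero holomorphic function satisfying simultaneously the automorphy factor of $\mu^\ast\kL_{m,\xi}\otimes\kO_X(m[\Sigma])$ and that of $\kW_{m,n,\xi}$, the two multipliers must agree identically, which yields the displayed isomorphism.

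Then I would tensor this identity by $\kO_X(-m[\Sigma])$ to obtain $\kW_{m,n,\xi}(-m[\Sigma])\cong\mu^\ast\kL_{m,\xi}$, the isomorphism being realized concretely by division by $D_m$. The one point that genuinely requires an argument, and which I regard as the only remaining obstacle beyond the line-bundle identity above, is that this division stays holomorphic: a global section of $\kW_{m,n,\xi}$ vanishing to order $\ge m$ along every $\Sigma_{pq}$ must be divisible by $D_m$ as a holomorphic function. This follows from smoothness of $X$, so that its local rings are factorial, together with the fact that the $\vartheta(z_p-z_q)$ cut out distinct reduced irreducible divisors and are therefore pairwise coprime, whence divisibility by each $\vartheta(z_p-z_q)^m$ forces divisibility by their product. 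Taking global sections and invoking Lemma \ref{L:SectionsPullback} then gives
\[
\Gamma\bigl(X,\kW_{m,n,\xi}(-m[\Sigma])\bigr)\;\cong\;\Gamma\bigl(X,\mu^\ast\kL_{m,\xi}\bigr)\;\cong\;\mathsf{W}_{m,\xi},
\]
an $m$-dimensional space with basis $h_1,\dots,h_m$; under multiplication by $D_m$ this basis maps exactly to $\Phi_1,\dots,\Phi_m$. Hence the image of the isomorphism is $\langle\Phi_1,\dots,\Phi_m\rangle_\CC=\mathsf{V}_{m,n,\xi}$, and equality with $\Gamma\bigl(X,\kW_{m,n,\xi}(-m[\Sigma])\bigr)$ follows. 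The upshot is that once the line-bundle identity of the second paragraph is in place the statement is forced, with no independent dimension estimate needed.
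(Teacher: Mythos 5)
Your proposal is correct and follows essentially the same route as the paper's proof: the line-bundle identity $\kW_{m,n,\xi} \cong \mu^\ast(\kL_{m,\xi}) \otimes \kO_X\bigl(m[\Sigma]\bigr)$ (read off from the quasi-periodicity computations of Lemma \ref{L:HRwavefunction}), the observation that $\mathsf{div}(D_m) = m[\Sigma]$ so that multiplication by $D_m$ realizes the twist, and Lemma \ref{L:SectionsPullback} to identify $\Gamma\bigl(X, \mu^\ast(\kL_{m,\xi})\bigr)$ with $\mathsf{W}_{m,\xi}$. The extra details you supply (matching of automorphy factors on the complement of the zero set of $\Phi_j$, and the factoriality argument for divisibility by $D_m$) are points the paper leaves implicit, so your write-up is just a more explicit version of the same argument.
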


\begin{proof}
Note that for any $m \in \NN$, the function $D_m$ given by 
(\ref{E:HRCentral1}) is a section  of the line bundle
of $\kO_X\bigl(m \bigl[\Sigma\bigr]\bigr) \cong \bigotimes\limits_{p < q} 
\sigma_{pq}^\ast\bigl(\kO_E\bigl(m[0]\bigr)\bigr)
$
and $\mathsf{div}(D_m) = m [\Sigma]$.  We have an isomorphism
$$
\kW_{m, n, \xi} \cong \mu^\ast(\kL_{m, \xi}) \otimes \kO_X \bigl(m \bigl[\Sigma\bigr]\bigr),
$$
hence $\mathsf{V}_{m, \, n, \, \xi} \cong 
\Gamma\bigl(X, \mu^\ast(\kL_{m, \xi})\bigr)$. By Lemma \ref{L:SectionsPullback},
the map  $\Gamma(E, \kL_{m, \xi}) \stackrel{\mu^\ast}\lar 
\Gamma\bigl(X, \mu^\ast(\kL_{m, \xi})\bigr)$ is an isomorphism,  what implies the result. 
\end{proof}

\begin{remark}
In \cite[Definition 2]{Klevtsov}, the space of  \emph{genus one Laughlin's  wave functions} with the filling fraction $\frac{1}{m}$ was defined (following   the original Ansatz of Laughlin \cite{Laughlin}) to be the subspace
$\mathsf{V}(\kL) \subseteq \Gamma(X, \kL \boxtimes \dots \boxtimes \kL)$, where $\kL$ is some ample line bundle on $E$ of degree $mn$ consisting of those sections $\Phi$, which satisfy the following two conditions:
\begin{enumerate}
\item $\Phi$ has zero of order at least $m$ on each partial diagonal $\Sigma_{pq}$ for all $1 \le p < q \le n$. 
\item $\Phi$ is symmetric if $m$ is even, and antisymmetric if $m$ is odd. 
\end{enumerate}
Our considerations show that  it is sufficient to impose only the first condition, whereas the second condition turns out to be \emph{automatically satisfied}. Indeed, the first condition already provides an axiomatic description of the space $\mathsf{V}_{m,n,\xi}$; see (\ref{E:Axiomatic}). On the other hand, $\mathsf{V}_{m,n,\xi} = \left\langle \Phi_1, \dots, \Phi_m\right\rangle_\CC$  with  $\Phi_j$ given by (\ref{E:HRSpace}) for $1\le j \le m$. It is clear all elements of $\mathsf{V}_{m,n,\xi}$ satisfy the second condition.
\end{remark}

\begin{theorem}\label{T:GramMatrixHRSpace} Let $C = \bigl(\langle \Phi_i, \Phi_j\rangle\bigr)_{1 \le i, j \le m}$ be the Gram matrix of the basis $(\Phi_1, \dots, \Phi_m)$ of the Haldane--Rezayi space $\mathsf{V}_{m, n,  \xi}$. Then we have: $C = c(\xi) I_m$ for some  $c(\xi) \in \CC$. 
\end{theorem}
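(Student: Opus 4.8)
The plan is to exploit the \emph{magnetic translations} acting on the center of mass and to combine their unitarity with the explicit matrices of (\ref{E:HeisenbergRepresentation}). For each particle index $1 \le k \le n$ let $T_1^{(k)}, T_2^{(k)}$ denote the single-particle magnetic translations of Lemma \ref{L:MagneticTranslations} (with field strength $mn$, acting on the $k$-th tensor factor $\mathsf{H}_{mn, \xi}$ or $\mathsf{H}_{mn,\xi}^\sharp$ according to the sign of $\epsilon_{m,n}$; see also Remark \ref{R:MagneticTranslations}), and set $\mathbb{T}_l := T_l^{(1)} \otimes \dots \otimes T_l^{(n)}$ for $l = 1, 2$. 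Since the many-body Hilbert space of square-integrable sections of $\kW_{m,n,\xi}$ is the Hilbert tensor product of the $n$ single-particle spaces and each $T_l^{(k)}$ is unitary, the operator $\mathbb{T}_l$ is unitary, and hence restricts to an isometry on any invariant subspace.

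First I would compute the effect of $\mathbb{T}_1$ and $\mathbb{T}_2$ on the basis $(\Phi_1, \dots, \Phi_m)$. The operator $\mathbb{T}_1$ (resp. $\mathbb{T}_2$) translates each variable $z_k$ simultaneously by $\frac{1}{mn}$ (resp. $\frac{\tau}{mn}$, together with the multiplier from (\ref{E:MagneticTranslationsFormulae})); consequently the factor $D_m$, which depends only on the differences $z_p - z_q$, is left invariant, while the center-of-mass variable $w = z_1 + \dots + z_n$ is shifted by $\frac{1}{m}$ (resp. $\frac{\tau}{m}$). A direct comparison then shows that on the center-of-mass factor $\Phi_j^c = \mu^\ast(h_j)$ the operator $\mathbb{T}_l$ acts exactly as the single-particle translation $T_l$ of Lemma \ref{L:MagneticTranslations} with $k = m$ and $z = w$, the multipliers matching verbatim. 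In particular $\mathbb{T}_l$ preserves $\mathsf{V}_{m,n,\xi}$ and, by (\ref{E:HeisenbergRepresentation}), acts on $(\Phi_1, \dots, \Phi_m)$ through the diagonal matrix $\mathrm{diag}(1, q, \dots, q^{m-1})$ for $l=1$ and through the cyclic permutation $\Phi_j \mapsto \Phi_{j+1}$ (indices mod $m$) for $l=2$, where $q = \exp(2\pi i/m)$.

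The theorem then follows from two elementary consequences of isometry. Since $\mathbb{T}_1 \Phi_j = q^{j-1}\Phi_j$, for $i \ne j$ we get $\langle \Phi_i, \Phi_j\rangle = \langle \mathbb{T}_1 \Phi_i, \mathbb{T}_1 \Phi_j\rangle = q^{\,i-j}\langle \Phi_i, \Phi_j\rangle$, and since $q^{i-j} \ne 1$ for $1 \le i \ne j \le m$ this forces $\langle \Phi_i, \Phi_j\rangle = 0$; thus $C$ is diagonal. Since $\mathbb{T}_2$ permutes the basis cyclically, $\langle \Phi_{j+1}, \Phi_{j+1}\rangle = \langle \mathbb{T}_2 \Phi_j, \mathbb{T}_2 \Phi_j\rangle = \langle \Phi_j, \Phi_j\rangle$, so all diagonal entries coincide. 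Writing $c(\xi) := \langle \Phi_1, \Phi_1\rangle$ we conclude $C = c(\xi) I_m$.

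The only genuinely delicate point is the bookkeeping in the second paragraph: one must verify that the \emph{product} of the $n$ strength-$mn$ single-particle translations restricts, via the isomorphism $\mu^\ast$ of Lemma \ref{L:HRSpace}, to the strength-$m$ translation on $\mathsf{W}_{m,\xi}$ — that is, that the $n$ phase factors collected in $\mathbb{T}_2$ assemble precisely into the single multiplier of (\ref{E:MagneticTranslationsFormulae}) at $k = m$, $z = w$ — and that $\mathbb{T}_l$ indeed maps $\mathsf{V}_{m,n,\xi}$ into itself, where the invariance of $D_m$ under the diagonal translation is exactly what preserves the prescribed vanishing along the partial diagonals $\Sigma_{pq}$. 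Everything else is the standard ``distinct eigenvalues / transitive permutation'' argument.
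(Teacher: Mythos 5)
Your proposal is correct and follows essentially the same route as the paper's proof: the paper also defines the many-body magnetic translations factor-wise (i.e.\ as the tensor product of the strength-$mn$ single-particle translations), verifies that $\mathsf{V}_{m,n,\xi}$ is invariant because $D_m$ depends only on differences while the center-of-mass variable $w$ is shifted by $\frac{1}{m}$ resp.\ $\frac{\tau}{m}$ with exactly the multiplier of Lemma \ref{L:MagneticTranslations} at $k=m$, and then deduces $C = c(\xi) I_m$ from unitarity via the eigenvalue/cyclic-permutation structure of (\ref{E:HeisenbergRepresentation}). Your phase bookkeeping for $\mathbb{T}_2$ matches the paper's explicit formula, so there is no gap.
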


\begin{proof} Let $\mathsf{H}_{m, n, \xi} = L_2(X, \kW_{m, n, \xi})$. By Lemma \ref{L:HRSpace} we have:
\begin{equation}\label{E:HRSpaceCharterized}
\mathsf{V}_{m, n, \xi} = \left\{\Psi \in \mathsf{H}_{m, n, \xi}
\left| \, \begin{array}{l}
\Psi \; \mbox{is holomorphic} \\
\Psi \; \mbox{has vanishing of order at least} \; m \; \mbox{along } \\  \mbox{each diagonal} \;  \Sigma_{pq}, 1 \le p \ne q \le n
\end{array}
\right.
\right\}.
\end{equation}
For a function
$
\Psi(z_1, \dots, z_n) = \psi_1(z_1)\dots  \psi_n(z_n) \in \mathsf{H}_{m, n, \xi}
$
and $k = 1, 2$
we put:
$$
\bigl(T_k(\Psi)\bigr)(z_1, \dots, z_n) = \bigl(T_k(\psi_1)(z_1)\bigr) \dots
\bigl(T_k(\psi_n)(z_n)\bigr), 
$$
where $T_k(\psi_j)$ is given by (\ref{E:MagneticTranslationsFormulae}) for  all $1 \le j \le n$ (regardless of the parity of $\epsilon_{m, n}$). 
This defines a pair of  unitary operators $T_1$ and $T_2$ acting on the Hilbert space $\mathsf{H}_{m, n, \xi}$. For 
 an arbitrary  $\Psi \in \mathsf{H}_{m, n, \xi}$, we have the following explicit formulae: 
\begin{equation*}
\left\{
\begin{array}{l}
\bigl(T_1(\Psi)\bigr)(z_1, \dots, z_n) = \Psi\left(z_1 + \dfrac{1}{mn}, \dots, 
z_n + \dfrac{1}{mn}\right) \\
\bigl(T_2(\Psi)\bigr)(z_1, \dots, z_n) = \exp\Bigl(\dfrac{2\pi i \xi + \pi i \tau}{m}\Bigr) \exp(2\pi iw)\Psi\Bigl(z_1 + \dfrac{\tau}{mn}, \dots, 
z_n + \dfrac{\tau}{mn}\Bigr),
\end{array}
\right.
\end{equation*}
where $w = z_1 + \dots + z_n$. The characterization (\ref{E:HRSpaceCharterized}) allows to conclude that the Haldane--Rezayi subspace $\mathsf{V}_{m, n,  \xi}$ is invariant under the action of $T_1$ and $T_2$. For  
$\mathsf{V}_{m, n,  \xi} \ni \Psi  = \Psi(z_1, \dots, z_n) = \Phi^c(w) D_m(z_1, \dots, z_n)$ we get: 
\begin{equation*}\label{E:ActionCentralFunctions}
\left\{
\begin{array}{l}
\bigl(T_1(\Psi)\bigr)(z_1, \dots, z_n) = \Phi^c\left(w + \dfrac{1}{m}\right) D_m(z_1, \dots, z_n)\\
\bigl(T_2(\Psi)\bigr)(z_1, \dots, z_n) = \exp\Bigl(\dfrac{2\pi i \xi + \pi i \tau}{m}\Bigr) \exp(2\pi iw)
\Phi^c\left(w + \dfrac{\tau}{m}\right) D_m(z_1, \dots, z_n).
\end{array}
\right.
\end{equation*}
It follows that the action of magnetic translations  on the space $\mathsf{V}_{m, n, \xi}$ matches with the analogous action on  $\mathsf{W}_{m, \xi}$, given in Lemma \ref{L:MagneticTranslations}.
According to (\ref{E:HeisenbergRepresentation}) we have: 
\begin{equation*}
T_1\bigl(\Phi_j) = q^{j-1} \Phi_j \; \mbox{and} \; T_2\bigl(\Phi_j) = \Phi_{j+1} \; \mbox{\rm for any} \; 1 \le j \le m,
\end{equation*}
where $\Phi_{m+1} = \Phi_1$. Since $T_1$ and $T_2$ are \emph{unitary} operators, we conclude that
\begin{equation}\label{E:Orthogonality}
\langle \Phi_i, \Phi_j\rangle = 0 \; \mbox{\rm and} \; \langle \Phi_i, \Phi_i\rangle = \langle \Phi_j, \Phi_j\rangle \; 
\mbox{\rm for all} \; 1 \le i \ne j \le m,
\end{equation}
which finishes  the proof. 
\end{proof}

\begin{remark} In the case $\epsilon_{m, n} = 1$, the Heisenberg group $G_{mn}$ acts on the Hilbert space $\mathsf{H}_{m, n, \xi}$ by magnetic translations; see Lemma \ref{L:MagneticTranslations}. If $\epsilon_{m, n} = -1$, then the group $G_{mn}$ has to replaced by its central extension $\widetilde{G}_{mn}$; see Remark \ref{R:MagneticTranslations}.   Regardless of the parity of $\epsilon_{m, n}$, the Haldane--Rezayi subspace $\mathsf{V}_{m, n, \xi}$ remains invariant under these group actions. The obtained representation of  $G_{mn}$ (respectively, of  $\widetilde{G}_{mn}$) on 
 $\mathsf{V}_{m, n, \xi}$ coincides with the representation of 
$G_m$ on $\mathsf{W}_{m, \xi}$ via the surjective group homomorphism  
$G_{mn} \rightarrowdbl  G_m$ (respectively, 
$\widetilde{G}_{mn} \rightarrowdbl  G_m$). 
Note that no sign issues arise in the bosonic case ($m \in 2\NN$), regardless of the number of particles $n$. 
\end{remark}

\section{Multivariate theta functions  and generalized Heisenberg groups}\label{S:CenterMass}
Let $g \in \NN$ and $\Omega \in \Mat_{g \times g}(\CC)$ be a symmetric matrix whose  imaginary part $\mathfrak{Im}(\Omega)$ is positive definite. Analogously to (\ref{E:ThetaWithCharacteristics}) we define for any 
$\vec{z} \in \CC^g$ and $\vec{a}, \vec{b} \in \RR^g$ the series:
\begin{equation}\label{E:MultiThetaWithCharacteristics}
\Theta[\vec{a}, \vec{b}](\vec{z}\,|\, \Omega):= \sum\limits_{\vec{k} \in \mathbbm{Z}^{g}} 
\exp\bigl(\pi i (\vec{k} + \vec{a})^t \Omega (\vec{k} + \vec{a})
+  2\pi i (\vec{k} + \vec{a})^t (\vec{z}+\vec{b})\bigr).
\end{equation}
The following results can be found in \cite[Section II.1]{Mumford}.

\begin{proposition}\label{P:BasicTheta} As a function of $\vec{z}$, the theta-series (\ref{E:MultiThetaWithCharacteristics}) converges absolutely and uniformly on {compact subsets of} $\CC^g$ and satisfies the following quasi-periodic conditions:
\begin{equation}\label{E:MultiThetaFunctionsTransfRules}
\left\{
\begin{array}{lcl}
\Theta[\vec{a}, \vec{b}](\vec{z} + \vec{l} | \Omega ) & = & \exp(2\pi i \vec{a}^t \vec{l})  \Theta[\vec{a}, \vec{b}](\vec{z} | \Omega ) \\

\Theta[\vec{a}, \vec{b}](\vec{z} + \Omega \vec{l} | \Omega )  & = & \exp(- 2\pi i \vec{l}^t (\vec{z} + \vec{b}) - \pi i \vec{l}^t \Omega \vec{l}) \Theta[\vec{a}, \vec{b}](\vec{z} | \Omega )
\end{array}
\right.
\end{equation}
for any $\vec{l} \in \ZZ^g$. 

Next, consider the abelian variety $B := \CC^g/(\ZZ^g + \Omega \ZZ^g)$ and the so-called theta line bundle $\kT_{\vec\xi}$ on $B$ (where  $\vec{\xi} = \Omega\vec{a} + \vec{b}$) defined as $\kT_{\vec\xi} = \CC^g \times \CC/\sim$, where 
\begin{equation*}
 (\vec{z}, v) \sim \bigl(\vec{z} + \vec{l}, \exp(2\pi i \vec{a}^t \vec{l})v\bigr) \sim \bigl(\vec{z} + \Omega \vec{l}, \exp(- 2\pi i \vec{l}^t (\vec{z} + \vec{b}) - \pi i \vec{l}^t \Omega \vec{l})\bigr)
\end{equation*}
for all $\vec{l} \in \ZZ^g$. Then $\kT_{\vec\xi}$ is ample and $\Gamma(B, \kT_{\vec\xi}) = 
\CC \cdot \Theta[\vec{a}, \vec{b}](\vec{z} | \Omega )$. In other words, the vector space of holomorphic functions on $\CC^g$ satisfying the quasi-periodicity constraints 
(\ref{E:MultiThetaFunctionsTransfRules}) has dimension one and is generated by the multi-variate theta-function $\Theta[\vec{a}, \vec{b}](\vec{z} | \Omega)$. 
\end{proposition}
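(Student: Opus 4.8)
The plan is to handle the three assertions separately: convergence and holomorphy, the two quasi-periodicity laws, and the description of the global sections together with ampleness. The first two are routine estimates and manipulations of the series (\ref{E:MultiThetaWithCharacteristics}), and the real content is in the third. For convergence, since $\mathfrak{Im}(\Omega)$ is positive definite there is a constant $c > 0$ with $\vec{k}^t \mathfrak{Im}(\Omega) \vec{k} \geq c |\vec{k}|^2$ for all $\vec{k} \in \RR^g$. The modulus of the general term of (\ref{E:MultiThetaWithCharacteristics}) is $\exp\bigl(-\pi (\vec{k}+\vec{a})^t \mathfrak{Im}(\Omega)(\vec{k}+\vec{a}) - 2\pi (\vec{k}+\vec{a})^t \mathfrak{Im}(\vec{z}+\vec{b})\bigr)$, whose quadratic part in $\vec{k}$ produces Gaussian decay. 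On a compact subset of $\CC^g$ the linear term in $\vec{k}$ stays uniformly bounded, so the series is dominated by a convergent Gaussian sum; this gives absolute and uniform convergence on compacta and hence holomorphy of $\Theta[\vec{a},\vec{b}](\,\cdot\,|\,\Omega)$.

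For the transformation laws, the first is immediate: replacing $\vec{z}$ by $\vec{z}+\vec{l}$ with $\vec{l}\in\ZZ^g$ multiplies the $\vec{k}$-th term by $\exp\bigl(2\pi i(\vec{k}+\vec{a})^t\vec{l}\bigr)=\exp(2\pi i\,\vec{a}^t\vec{l})$, because $\vec{k}^t\vec{l}\in\ZZ$. For the second, I would substitute $\vec{z}\mapsto\vec{z}+\Omega\vec{l}$ and complete the square, using symmetry of $\Omega$ to write $(\vec{k}+\vec{a})^t\Omega(\vec{k}+\vec{a})+2(\vec{k}+\vec{a})^t\Omega\vec{l}=(\vec{k}+\vec{l}+\vec{a})^t\Omega(\vec{k}+\vec{l}+\vec{a})-\vec{l}^t\Omega\vec{l}$; re-indexing the sum by $\vec{k}\mapsto\vec{k}-\vec{l}$ then reproduces $\Theta[\vec{a},\vec{b}](\vec{z}|\,\Omega)$ up to the prefactor $\exp\bigl(-\pi i\,\vec{l}^t\Omega\vec{l}-2\pi i\,\vec{l}^t(\vec{z}+\vec{b})\bigr)$, which is exactly the claimed factor.

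These two laws are precisely the compatibility conditions making $\Theta[\vec{a},\vec{b}]$ descend to a global section of $\kT_{\vec\xi}$, so what remains is to show this section space is one-dimensional and that $\kT_{\vec\xi}$ is ample. For the dimension I would avoid Riemann--Roch and argue directly by Fourier analysis. Any holomorphic $f$ on $\CC^g$ satisfying the first law makes $f(\vec{z})\exp(-2\pi i\,\vec{a}^t\vec{z})$ invariant under $\ZZ^g$, hence $f(\vec{z})=\sum_{\vec{n}\in\ZZ^g}c_{\vec{n}}\exp\bigl(2\pi i(\vec{n}+\vec{a})^t\vec{z}\bigr)$. Substituting into the second law and matching the coefficient of $\exp\bigl(2\pi i(\vec{n}+\vec{a})^t\vec{z}\bigr)$ after the index shift $\vec{n}\mapsto\vec{n}+\vec{l}$ yields the recursion $c_{\vec{n}+\vec{l}}=c_{\vec{n}}\exp\bigl(2\pi i(\vec{n}+\vec{a})^t\Omega\vec{l}+2\pi i\,\vec{l}^t\vec{b}+\pi i\,\vec{l}^t\Omega\vec{l}\bigr)$, which determines every $c_{\vec{n}}$ from $c_{\vec{0}}$. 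Hence the space has dimension at most one; as $\Theta[\vec{a},\vec{b}]$ is a nonzero solution (with $c_{\vec{0}}\neq 0$) it has dimension exactly one. Ampleness I would deduce from the Appell--Humbert data: the Hermitian form attached to $\kT_{\vec\xi}$ has imaginary part integral on $\ZZ^g+\Omega\ZZ^g$ and is positive definite exactly because $\mathfrak{Im}(\Omega)$ is, so the Lefschetz/Kodaira criterion for line bundles on abelian varieties (\cite{Mumford,BirkenhakeLange}) applies. The step needing the most care is the coefficient recursion: one must confirm that the system obtained by letting $\vec{l}$ range over a basis of $\ZZ^g$ is self-consistent, which is guaranteed a posteriori by the fact that $\Theta[\vec{a},\vec{b}]$ itself solves it.
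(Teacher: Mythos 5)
Your proof is correct. Note, however, that the paper offers no argument of its own for this proposition: it is stated as a collection of classical results with a pointer to Mumford's Tata Lectures, Section II.1. What you have written is essentially a self-contained reconstruction of that classical proof. The convergence estimate (Gaussian decay from positive definiteness of $\mathfrak{Im}(\Omega)$ dominating the linear term, uniformly on compacta) and the two quasi-periodicity computations (completing the square in the exponent using the symmetry of $\Omega$, then re-indexing the sum) are the standard ones. Your dimension count is exactly the method of the cited source: expand any solution of the first functional equation in the basis $\exp\bigl(2\pi i(\vec{n}+\vec{a})^t\vec{z}\bigr)$ and observe that the second functional equation forces the recursion $c_{\vec{n}+\vec{l}}=c_{\vec{n}}\exp\bigl(2\pi i(\vec{n}+\vec{a})^t\Omega\vec{l}+2\pi i\,\vec{l}^t\vec{b}+\pi i\,\vec{l}^t\Omega\vec{l}\bigr)$, so that $c_{\vec{0}}$ determines all coefficients. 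Your logic on the consistency point is sound: uniqueness (dimension at most one) needs no compatibility check at all, since taking $\vec{n}=\vec{0}$ already pins down every $c_{\vec{l}}$ in terms of $c_{\vec{0}}$, while existence of a nonzero solution is witnessed by $\Theta[\vec{a},\vec{b}]$ itself, whose coefficients are all nonzero. The only step you delegate to the literature is ampleness; to make it fully rigorous one must check that the automorphy factor defining $\kT_{\vec\xi}$ is cohomologous to the canonical Appell--Humbert factor of the Hermitian form with matrix $\bigl(\mathfrak{Im}\,\Omega\bigr)^{-1}$, whose positive definiteness is exactly that of $\mathfrak{Im}(\Omega)$; this computation is carried out in the references you cite, and the paper itself relies on the same identification later (Section 6, where the Hermitian form of $\kT$ is identified with $\tfrac{1}{t}K^{-1}$ for $\Omega = \tau K$). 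So your route does not differ from the paper's in substance; it simply unfolds the proof that the paper is content to quote.
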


\begin{definition}\label{D:Kmatrix}
For a  symmetric and positive definite matrix $K \in \Mat_{g \times g}(\ZZ)$ we put 
\begin{equation}\label{E:GroupPi}
\Pi := \Pi_K = K^{-1} \ZZ^g/\ZZ^g \cong \ZZ^g/ K \ZZ^g.
\end{equation}
\end{definition}
\begin{remark}
Since the matrix $K$ is non-degenerate, the group  $\Pi$ is finite. Moreover, we have: $|\Pi| = \delta:= \big|\det(K)\big|$. 
\end{remark}

\noindent
Let $\Gamma = \Gamma_\delta = \left\langle \exp\left(\dfrac{2\pi i}{\delta}\right) \right\rangle \subset \CC^\ast$. Then we have a pairing
\begin{equation}\label{E:FormUpsilon}
\Pi \times \Pi \stackrel{\upsilon}\lar \Gamma, \quad \bigl([\vec{a}],  [\vec{b}]\bigr) \mapsto \exp\bigl(2\pi i \vec{a}^t K \vec{b}\bigr).
\end{equation}
Consider the abelian group  $\Pi^{\oplus 2} = \Pi \times \Pi$ and the pairing
\begin{equation}
\Pi^{\oplus 2} \times \Pi^{\oplus 2} \stackrel{\omega}\lar \Gamma, 
\left(\bigl([\vec{a}_1], [\vec{b}_1]\bigr), \bigl([\vec{a}_2], [\vec{b}_2]\bigr)\right) \mapsto \upsilon\bigl([\vec{a}_1], [\vec{b}_2]\bigr).
\end{equation}
It is easy to see that $\omega$ satisfies the $2$-cocycle conditions 
$$
\omega(c_1, c_2) \cdot \omega(c_1 + c_2, c_3) = \omega(c_2, c_3) \cdot \omega(c_1, c_2 + c_3) \quad \mbox{\rm and} \quad \omega(c, 0) = 1 =  \omega(0, c)
$$
for any elements $c_1, c_2, c_3, c \in \Pi^{\oplus 2}$. 
\begin{definition}
The finite Heisenberg group $G_K$ associated with  a symmetric and positive definite matrix $K \in \Mat_{g \times g}(\ZZ)$   is the set $\Pi^{\oplus 2}_K \times \Gamma_\delta$ equipped with the product
$$
(c_1, \gamma_1) \cdot (c_2, \gamma_2) := \bigl(c_1 + c_2, \omega(c_1, c_2) \gamma_1 \gamma_2\bigr).
$$
In particular, $G_K$ is a central extension
$$
0 \lar \Gamma_\delta \lar G_K \lar \Pi^{\oplus 2}_K \lar 1. 
$$
\end{definition}

\smallskip
\noindent
Note that for $K = (k)$, the group $G_K$ coincides with the group $G_k$ introduced in Definition \ref{D:HeisenbergGroup}. Our next goal is to generalize Lemma \ref{L:MagneticTranslations} constructing  a unitary  action of $G_K$ on an appropriate space of multi-variate theta-functions. 

\smallskip
\noindent
We put  $A := 
\underbrace{E \times \dots \times E}_{g  \; \mbox{\scriptsize{\sl times}}}$ and $\Omega = \tau K$.  Obviously,
\begin{equation}
A =   \CC^g/(\ZZ^g + \tau \ZZ^g) \stackrel{\kappa}\lar B := \CC^g/(\ZZ^g + \Omega \ZZ^g), \;  \vec{z} \mapsto K\vec{z}
\end{equation}
is an isogeny of $g$-dimensional abelian varieties, 
whose kernel is equal to $\Pi$. In particular, $\kappa$ has degree  $\delta$. 

\smallskip
\noindent
For any $\vec{l} \in \RR^g$ consider the function 
\begin{equation}
U_{\vec{l}}(\vec{z}, \vec{\xi}) =  \exp\bigl(-\pi i (\vec{l}, 2 \vec{\xi} + 2K \vec{z} + \Omega \vec{l})\bigr).
\end{equation}
It is easy to see that
$$
U_{\vec{l}_1 + \vec{l}_2}(\vec{z}, \vec{\xi}) = U_{\vec{l}_1}(\vec{z} + \tau \vec{l}_2, \vec{\xi})\cdot  U_{\vec{l}_2}(\vec{z}, \vec{\xi}) \quad \mbox{for all} \quad \vec{l}_1,  \vec{l}_2 \in \RR^g.
$$
Moreover, $U_{\vec{l}}(\vec{z} + \vec{n}, \vec{\xi}) = U_{\vec{l}}(\vec{z}, \vec{\xi})$ for any $\vec{n} \in \ZZ^g$ provided $\vec{l} \in K^{-1} \ZZ^g$. 
Now, consider the line bundle on $B$ given by the automorphy factor 
\begin{equation}\label{E:ThetaBundleMod}
 (\vec{z}, v) \sim \bigl(\vec{z} + \vec{m}, v\bigr) \sim \bigl(\vec{z} + \Omega \vec{m}, \exp\bigl(-\pi i (\vec{m}, 2 \vec{\xi} + 2\vec{z} + \Omega \vec{m}) v\bigr)\bigr)
\end{equation}
for all $\vec{m} \in \ZZ^g$. It is easy to see that (\ref{E:ThetaBundleMod}) provides an equivalent description of the line bundle $\kT_{\vec\xi}$ introduced in the proof of  Proposition \ref{P:BasicTheta}. Let
$\kR_{\vec\xi} := \kappa^\ast\bigl(\kT_{\vec\xi}\bigr)$. Then  we have: 
\begin{equation}\label{E:HolomSectionsDegreeg}
\mathsf{W}_{K, \vec\xi} :=  \left\{
\CC^g \stackrel{H}\lar \CC \; \mbox{\rm holomorphic}\left| \, 
\begin{array}{l} 
\begin{array}{l}
H(\vec{z}+ \vec{l}) = H(\vec{z}) \\
H(\vec{z}+ \tau \vec{l}) = 
U_{\vec{l}}(\vec{z}, \vec{\xi})  H(\vec{z}) 
\end{array}
 \mbox{\rm for all} \; \vec{l} \in \ZZ^g\\
\end{array}
\right.
\right\}
\end{equation}
is the space of global holomorphic sections of the line bundle $\kR_{\vec\xi}$.

\smallskip
\noindent
For any $\vec{z} \in \CC^g$ we shall write:
$$
\vec{z} = \vec{x} + \tau \vec{y} = (\vec{x} + s \vec{y}) + it \vec{y},
$$
where $\vec{x}, \vec{y} \in \RR^g$ and $\tau = s + i t$ with $s, t \in \RR$. 
Similarly, for $\vec{\xi} \in \CC^g$ we shall write:
$$
\vec{\xi} = \vec{b} + \tau \vec{a} = (\vec{b} + s \vec{a}) + it \vec{a},
$$
where $\vec{a}, \vec{b} \in \RR^g$. Analogously to (\ref{E:MetricLB1}), consider the function
\begin{equation}\label{E:MetricLBmv}
h(\vec{z}) = h_{K, \vec\xi}\,(\vec{x}, \vec{y}) := 
\exp\bigl(- 2 \pi t (\vec{y}, K \vec{y} + 2 \vec{a})\bigr).
\end{equation}
It is clear that $h$ has the following quasi-periodicity properties: 
$$
h(\vec{x} + \vec{l}, \vec{y}) = h(\vec{x}, \vec{y}) \quad \mbox{\rm and} \quad h(\vec{x}, \vec{y} + \vec{l}) = \exp\bigl(-2 \pi t (\vec{l}, K \vec{l} + 2 K \vec{y} + 2\vec{a})\bigr) h(\vec{x}, \vec{y})
$$
for any $\vec{l} \in \ZZ^g$. 

\begin{lemma} The function $h$ defines a hermitian metric on the line bundle 
$\kR_{\vec{\xi}}$. 
\end{lemma}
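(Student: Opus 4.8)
The plan is to mimic the single-variable argument of Lemma~\ref{L:MetricOnLB}: giving a hermitian metric on the holomorphic line bundle $\kR_{\vec\xi}$ is the same as specifying a smooth, strictly positive function $h$ on $\CC^g$ such that, for every holomorphic section $H$ of $\kR_{\vec\xi}$, the quantity $\eta_H(\vec z) := |H(\vec z)|^2\, h(\vec z)$ is invariant under the lattice $\ZZ^g + \tau\ZZ^g$ and hence descends to a well-defined function on $A$. Since the given $h$ is visibly real and positive, positive-definiteness is automatic, and the whole content lies in verifying this invariance, i.e.\ that $h$ transforms reciprocally to the squared modulus of the automorphy factor of $\kR_{\vec\xi}$ recorded in (\ref{E:HolomSectionsDegreeg}).

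For the direction $\vec z \mapsto \vec z + \vec l$ with $\vec l \in \ZZ^g$ there is nothing to do: $H$ is invariant by the first line of (\ref{E:HolomSectionsDegreeg}), and $h(\vec x + \vec l, \vec y) = h(\vec x, \vec y)$ by the first quasi-periodicity property of $h$ stated above. The substance is in the direction $\vec z \mapsto \vec z + \tau\vec l$, where I must check
\[
\bigl|U_{\vec l}(\vec z, \vec\xi)\bigr|^2 \cdot h(\vec x, \vec y + \vec l) = h(\vec x, \vec y).
\]
To this end I would compute $|U_{\vec l}|^2$. Writing $U_{\vec l} = \exp(-\pi i E)$ with $E = (\vec l,\, 2\vec\xi + 2K\vec z + \Omega\vec l)$, one has $|U_{\vec l}|^2 = \exp\bigl(2\pi\, \mathfrak{Im}(E)\bigr)$. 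Substituting $\Omega = \tau K$, $\tau = s + i t$, $\vec z = \vec x + \tau\vec y$ and $\vec\xi = \vec b + \tau\vec a$, the summands $(\vec l, 2\vec b)$ and $(\vec l, 2K\vec x)$ are real and drop out of the imaginary part, while each remaining summand carries exactly one factor of $\tau$; collecting the coefficients of $t$ yields
\[
\mathfrak{Im}(E) = t\bigl(\vec l,\, K\vec l + 2K\vec y + 2\vec a\bigr),
\]
so that $|U_{\vec l}(\vec z, \vec\xi)|^2 = \exp\bigl(2\pi t(\vec l,\, K\vec l + 2K\vec y + 2\vec a)\bigr)$.

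This is precisely the reciprocal of the factor $\exp\bigl(-2\pi t(\vec l,\, K\vec l + 2K\vec y + 2\vec a)\bigr)$ in the second quasi-periodicity relation for $h$, so the two cancel and the displayed identity holds. Consequently $\eta_H$ is $(\ZZ^g + \tau\ZZ^g)$-invariant and defines a smooth function on $A$, which is the asserted hermitian metric on $\kR_{\vec\xi}$ (recovering Lemma~\ref{L:MetricOnLB} when $g = 1$ and $K = (k)$). I do not expect a genuine obstacle: the argument is a bookkeeping computation, and the only step requiring care is the correct extraction of $\mathfrak{Im}(E)$ — tracking which terms are real and which are proportional to $\tau$ — so that the cancellation against the transformation rule of $h$ comes out exactly.
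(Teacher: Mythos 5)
Your proposal is correct and takes essentially the same approach as the paper: both proofs reduce the statement to checking that $|U_{\vec{l}}(\vec{z},\vec{\xi})|^2\, h(\vec{x},\vec{y}+\vec{l}) = h(\vec{x},\vec{y})$, and your computation $\bigl|U_{\vec{l}}\bigr|^2 = \exp\bigl(2\pi t(\vec{l},\, K\vec{l} + 2K\vec{y} + 2\vec{a})\bigr)$ agrees exactly with the modulus the paper records, which cancels against the stated quasi-periodicity of $h$. The only cosmetic remark is that the metric should be checked on \emph{smooth} sections (as in the paper), not just holomorphic ones, but since the verification concerns only $h$ and the automorphy factor, your argument applies verbatim.
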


\begin{proof} A smooth function $\CC^g \stackrel{\Phi}\lar \CC$ is a smooth section of the line bundle $\kR_{\vec{\xi}}$ if 
$$
\Phi(\vec{x} + \vec{l}, \vec{y}) = \Phi(\vec{x}, \vec{y}) \quad \mbox{\rm and} \quad \Phi(\vec{x}, \vec{y}  + \vec{l}) = U_{\vec{l}}(\vec{z}, \vec{\xi}) \Phi(\vec{x}, \vec{y})
$$
for any $\vec{l} \in \ZZ^g$. Observe that
$
\big|U_{\vec{l}}(\vec{z}, \vec{\xi})\big| = \exp\bigl(\pi t (\vec{l}, 2 \vec{a} + K (2 \vec{y} + \vec{l})\bigr). 
$ Hence, the function
$$
\CC^g \stackrel{\eta_\Phi}\lar \RR_+, \; \vec{z} \mapsto 
\big|\Phi(\vec{z})\big|^2 h(\vec{z})
$$
is periodic:
$
\eta_\Phi(\vec{x} + \vec{l}, \vec{y}) = \eta_\Phi(\vec{x}, \vec{y})  = \eta_\Phi(\vec{x}, \vec{y}  + \vec{l})
$
for any $\vec{l} \in \ZZ^g$. Hence, $\eta_\Phi$ descends to a function 
$A \stackrel{\eta_\Phi}\lar \RR_+$ and $h$ defines a hermitian metric on $\kR_{\vec\xi}$, as asserted.  Note  that  $\eta_{\Phi}(\vec{z})$ is the square of the length of the value of the section $\Phi$ at the point $[\vec{z}] \in A$. 
\end{proof}

\begin{corollary} The vector space $\mathsf{W}_{K, \vec{\xi}}$ gets equipped with a hermitian scalar product given by the formula
\begin{equation}\label{E:scalarproductmv}
\langle \Phi_1, \Phi_2 \rangle := \iint\limits_{[0, 1]^{2g}} 
\exp\bigl(-2\pi  t(\vec{y}, K \vec{y} + 2\vec{a})\bigr)
 \Phi_1(\vec{x}, \vec{y})  \overline{\Phi_2(\vec{x}, \vec{y})}  d\vec{x} \wedge d \vec{y}.
\end{equation}
\end{corollary}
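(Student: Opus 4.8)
The plan is to realize $\langle\,\cdot\,,\,\cdot\,\rangle$ as the standard $L^2$-inner product on the space of global holomorphic sections of the hermitian line bundle $\bigl(\kR_{\vec\xi}, h\bigr)$ over the \emph{compact} abelian variety $A$. Concretely, for two smooth sections $\Phi_1, \Phi_2$ of $\kR_{\vec\xi}$ I would first form the pointwise hermitian pairing $\CC^g \lar \CC$, $\vec z \mapsto \Phi_1(\vec z)\,\overline{\Phi_2(\vec z)}\,h(\vec z)$, and show that it is $(\ZZ^g + \tau\ZZ^g)$-periodic, hence descends to a smooth function on $A$. Integrating this function over the fundamental domain $[0,1]^{2g}$ in the real coordinates $(\vec x, \vec y)$ then yields precisely the right-hand side of (\ref{E:scalarproductmv}); since every element of $\mathsf{W}_{K, \vec\xi}$ is in particular a smooth section of $\kR_{\vec\xi}$, this produces a well-defined sesquilinear form on $\mathsf{W}_{K, \vec\xi}$.

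The key step is the descent, which is essentially the cancellation already carried out in the proof of the preceding Lemma. Under $\vec z \mapsto \vec z + \vec l$ with $\vec l \in \ZZ^g$, the smoothness conditions give $\Phi_i(\vec x + \vec l, \vec y) = \Phi_i(\vec x, \vec y)$ together with $h(\vec x + \vec l, \vec y) = h(\vec x, \vec y)$, so the integrand is invariant. Under $\vec z \mapsto \vec z + \tau\vec l$, each $\Phi_i$ acquires the factor $U_{\vec l}(\vec z, \vec\xi)$, so $\Phi_1\overline{\Phi_2}$ is multiplied by $|U_{\vec l}(\vec z, \vec\xi)|^2 = \exp\bigl(2\pi t(\vec l, 2\vec a + K(2\vec y + \vec l))\bigr)$, whereas $h$ is multiplied by the reciprocal factor $\exp\bigl(-2\pi t(\vec l, K\vec l + 2K\vec y + 2\vec a)\bigr)$. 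These cancel exactly, so $\Phi_1\overline{\Phi_2}\,h$ is invariant under the full lattice and descends to $A$. This is precisely the computation recorded as $\eta_\Phi(\vec x, \vec y + \vec l) = \eta_\Phi(\vec x, \vec y)$ in the Lemma, now applied to a pair of distinct sections rather than to $\Phi = \Phi_1 = \Phi_2$.

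Finally I would verify the three defining properties of a hermitian scalar product. Sesquilinearity is immediate from the formula. Hermitian symmetry $\langle \Phi_1, \Phi_2\rangle = \overline{\langle \Phi_2, \Phi_1\rangle}$ holds because $h$ is real-valued. For positive definiteness, observe that $\langle \Phi, \Phi\rangle = \iint_{[0,1]^{2g}} \eta_\Phi \, d\vec x \wedge d\vec y$ with $\eta_\Phi = |\Phi|^2 h \geqslant 0$, as $h > 0$ everywhere on $\CC^g$; since $\eta_\Phi$ is continuous and nonnegative, the integral vanishes only if $\eta_\Phi \equiv 0$, which by strict positivity of $h$ forces $\Phi \equiv 0$. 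There is no genuine obstacle in this corollary: the only point requiring care is the lattice-invariance of the integrand, and that cancellation has already been established in the Lemma, so the statement amounts to assembling these observations and invoking compactness of $A$ to guarantee finiteness of the integral.
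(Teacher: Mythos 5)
Your proposal is correct and follows exactly the route the paper intends: the Corollary is stated as an immediate consequence of the preceding Lemma (that $h$ is a hermitian metric on $\kR_{\vec\xi}$), and your argument simply polarizes the Lemma's computation $\eta_\Phi(\vec{x}+\vec{l},\vec{y}) = \eta_\Phi(\vec{x},\vec{y}) = \eta_\Phi(\vec{x},\vec{y}+\vec{l})$ to the pairing $\Phi_1\overline{\Phi_2}\,h$ of two distinct sections and then integrates over the compact torus $A$. The verification of sesquilinearity, hermitian symmetry, positive definiteness (via $h>0$) and finiteness is exactly what the paper leaves implicit.
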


\noindent
For any smooth function $\CC^g \stackrel{\Phi}\lar \CC$ and $\vec{a}, \vec{b} \in \RR^g$ we put: 
\begin{equation}\label{E:MagneticMultivariate}
\left\{
\begin{array}{l c l}
\bigl(S_{\vec{a}}(\Phi)\bigr)(\vec{z})  & = & \Phi(\vec{z} + \vec{a}) \\
\bigl(R_{\vec{b}}(\Phi)\bigr)(\vec{z})  & = & U_{\vec{b}}(\vec{z}, \vec\xi)^{-1} \Phi(\vec{z} + \tau \vec{b}).
\end{array}
\right.
\end{equation}
It is easy to see that for any $\vec{a}_1, \vec{a}_2, \vec{b}_1, \vec{b}_2 \in \RR^g$ we have: 
\begin{equation}\label{E:HG1}
S_{\vec{a}_1} \cdot  S_{\vec{a}_2} = S_{\vec{a}_1 + \vec{a}_2} \quad \mbox{\rm and} \quad   R_{\vec{b}_1} \cdot  R_{\vec{b}_2} = R_{\vec{b}_1 + \vec{b}_2}
\end{equation}
Moreover,
\begin{equation}\label{E:HG2}
S_{\vec{a}} \cdot R_{\vec{b}} = \exp\bigl(2\pi i(\vec{a}, K\vec{b})\bigr) R_{\vec{b}} \cdot S_{\vec{a}} \quad \mbox{\rm for all} \quad  \vec{a}, \vec{b} \in \RR^g.
\end{equation}

\begin{theorem}\label{T:HeisenbergGroupRepr} The following results are true.
\begin{itemize}
\item The Heisenberg group $H_K$ acts on $\mathsf{W}_{K, \vec{\xi}}$ by unitary operators: 
\begin{equation}\label{E:HG3}
H_K \ni \bigl(([\vec{a}], [\vec{b}]), \gamma\bigr) \mapsto 
\gamma R_{\vec{b}} \cdot S_{\vec{a}}.
\end{equation}
Moreover, this representation is irreducible. 
\item The dimension of $\mathsf{W}_{K, \vec{\xi}}$ is $\delta$. Moreover, it has a distinguished basis  $\bigl(H_{\vec{c}}\bigr)_{\vec{c} \in \Pi}$, where 
\begin{equation}\label{E:CentralWaveFunctions}
H_{\vec{c}}(\vec{z}) := \Theta\bigl[\vec{c}, \vec{0}\bigr](K \vec{z} + \vec{\xi} | \Omega).
\end{equation}
\item For any $\vec{a}, \vec{b}, \vec{c} \in \Pi$ we have: 
\begin{equation}\label{E:ActionThetaBasis}
S_{\vec{a}}(H_{\vec{c}})  =  \upsilon(\vec{a}, \vec{c}) H_{\vec{c}} \quad
\mbox{\rm and} \quad 
R_{\vec{b}}(H_{\vec{c}}) =  H_{\vec{c} + \vec{b}}.
\end{equation}
\item For any $\vec{c}_1 \ne \vec{c}_2 \in \Pi$ we have:
\begin{equation}\label{E:ThetaOrthogonal}
\bigl\langle H_{\vec{c}_1}, H_{\vec{c}_2}\bigr\rangle = 0 \quad \mbox{\rm and} \quad 
\bigl\langle H_{\vec{c}_1}, H_{\vec{c}_1}\bigr\rangle = \bigl\langle  H_{\vec{c}_2}, H_{\vec{c}_2}\bigr\rangle.
\end{equation}
\end{itemize}
\end{theorem}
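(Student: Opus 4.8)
The plan is to treat the four assertions in the logical order in which they unlock one another, with the nondegeneracy of the pairing $\upsilon$ from (\ref{E:FormUpsilon}) as the structural engine. First I would verify that each $H_{\vec c}$ defined by (\ref{E:CentralWaveFunctions}) actually lies in $\mathsf{W}_{K,\vec\xi}$. This is a direct check of the two quasi-periodicity constraints in (\ref{E:HolomSectionsDegreeg}) against the transformation rules (\ref{E:MultiThetaFunctionsTransfRules}) for $\Theta[\vec c,\vec 0]$: the $\ZZ^g$-periodicity uses that $\vec c\in K^{-1}\ZZ^g$ forces $\vec c^{\,t}K\vec l\in\ZZ$, while the factor produced by the $\Omega\vec l$-shift is seen to coincide with $U_{\vec l}(\vec z,\vec\xi)$. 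Since $\Theta[\vec c+\vec n,\vec 0]=\Theta[\vec c,\vec0]$ for $\vec n\in\ZZ^g$ (reindex the defining series), the family $(H_{\vec c})_{\vec c\in\Pi}$ is well defined.

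Next I would establish the action of the commuting family $S_{\vec a}$. Evaluating $S_{\vec a}(H_{\vec c})$ via (\ref{E:MagneticMultivariate}) shifts the argument by $K\vec a\in\ZZ^g$, and the first rule of (\ref{E:MultiThetaFunctionsTransfRules}) produces the scalar $\exp(2\pi i\,\vec c^{\,t}K\vec a)=\upsilon(\vec a,\vec c)$ (using symmetry of $K$), which is the first half of (\ref{E:ActionThetaBasis}). Because $\upsilon$ is nondegenerate on $\Pi$ — for $[\vec a]\neq0$ one exhibits $\vec b\in K^{-1}\ZZ^g$ with $\vec a^{\,t}K\vec b\notin\ZZ$ by testing $\vec b=K^{-1}\vec e_j$ — the characters $\vec a\mapsto\upsilon(\vec a,\vec c)$ are pairwise distinct, so the $H_{\vec c}$ are joint eigenvectors with distinct eigencharacters and hence linearly independent. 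To conclude they form a basis I would compute $\ddim\mathsf{W}_{K,\vec\xi}$ cohomologically: $\kR_{\vec\xi}=\kappa^\ast\kT_{\vec\xi}$ is the pullback, under the degree-$\delta$ isogeny $\kappa$, of the theta bundle (for which $h^0=\chi=1$ by Proposition \ref{P:BasicTheta}), so $\chi(\kR_{\vec\xi})=\delta$, and ampleness of $\kR_{\vec\xi}$ gives $h^0=\chi=\delta=|\Pi|$.

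For the action of $R_{\vec b}$ I would use the characteristic-shift identity $\Theta[\vec c,\vec 0](\vec w+\Omega\vec b\,|\,\Omega)=\exp(-\pi i\,\vec b^{\,t}\Omega\vec b-2\pi i\,\vec b^{\,t}\vec w)\,\Theta[\vec c+\vec b,\vec 0](\vec w\,|\,\Omega)$, valid for arbitrary (not merely integral) $\vec b$ and proved by completing the square in the defining series. Substituting $\vec w=K\vec z+\vec\xi$ into the definition of $R_{\vec b}$ from (\ref{E:MagneticMultivariate}), the prefactor $U_{\vec b}(\vec z,\vec\xi)^{-1}$ cancels $\exp(-\pi i\,\vec b^{\,t}\Omega\vec b-2\pi i\,\vec b^{\,t}\vec w)$ exactly, leaving $R_{\vec b}(H_{\vec c})=H_{\vec c+\vec b}$, the second half of (\ref{E:ActionThetaBasis}). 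I expect this cancellation — checking that every $\vec b^{\,t}\vec\xi$, $\vec b^{\,t}K\vec z$ and $\vec b^{\,t}\Omega\vec b$ contribution matches — to be the one genuinely delicate computation, though it is purely bookkeeping.

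The remaining claims then follow formally. Unitarity of $S_{\vec a}$ and $R_{\vec b}$ with respect to (\ref{E:scalarproductmv}) I would obtain exactly as in Lemma \ref{L:MagneticTranslations}, by checking that the integrand $\eta_\Phi$ built from (\ref{E:MetricLBmv}) is merely translated (in the $\vec x$- and $\vec y$-directions respectively), the $R_{\vec b}$ case using the identity $|U_{\vec b}|^{-2}h(\vec z)=h(\vec z+\tau\vec b)$; together with the relations (\ref{E:HG1}) and (\ref{E:HG2}) this shows that (\ref{E:HG3}) is a unitary representation of $G_K$. Orthogonality (\ref{E:ThetaOrthogonal}) is then immediate: applying the unitary $S_{\vec a}$ gives $\langle H_{\vec c_1},H_{\vec c_2}\rangle=\upsilon(\vec a,\vec c_1)\overline{\upsilon(\vec a,\vec c_2)}\langle H_{\vec c_1},H_{\vec c_2}\rangle$, and nondegeneracy of $\upsilon$ supplies an $\vec a$ making the scalar $\neq1$ when $\vec c_1\neq\vec c_2$; equality of norms follows from $R_{\vec c_2-\vec c_1}H_{\vec c_1}=H_{\vec c_2}$ and unitarity of $R$. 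Finally, for irreducibility, any nonzero invariant subspace is $S_{\vec a}$-stable, hence a sum of the one-dimensional eigenlines $\CC H_{\vec c}$, and the transitive action $R_{\vec b}\colon H_{\vec c}\mapsto H_{\vec c+\vec b}$ then forces it to be the whole of $\mathsf{W}_{K,\vec\xi}$ (equivalently, this is the Stone--von Neumann theorem for the central character $\gamma\mapsto\gamma$).
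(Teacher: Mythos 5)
Your proposal is correct, and for most of the statement it coincides with the paper's own argument: membership of the $H_{\vec{c}}$ in $\mathsf{W}_{K, \vec{\xi}}$ via the transformation rules (\ref{E:MultiThetaFunctionsTransfRules}); the dimension count $h^0\bigl(A, \kR_{\vec{\xi}}\bigr) = \chi\bigl(A, \kappa^\ast \kT_{\vec{\xi}}\bigr) = \mathrm{deg}(\kappa)\,\chi\bigl(B, \kT_{\vec{\xi}}\bigr) = \delta$ using ampleness plus vanishing; the computation of the $S_{\vec{a}}$- and $R_{\vec{b}}$-actions, where the paper invokes exactly your characteristic-shift identity; unitarity by translation invariance of the integrand as in Lemma \ref{L:MagneticTranslations}; and orthogonality by testing against unitaries of the form $S_{K^{-1}\vec{e}_j}$ (your test vectors $\vec{b} = K^{-1}\vec{e}_j$ are literally the paper's $\vec{u}_j$; the paper derives linear independence \emph{from} orthogonality, while you get it earlier from distinctness of the eigencharacters -- a purely cosmetic reordering). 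The one step where you genuinely diverge is irreducibility. The paper runs a character computation: the trace of $\gamma R_{\vec{b}} S_{\vec{a}}$ vanishes unless $\vec{b} = \vec{0}$, the decomposition $S \cong \bigoplus_{\vec{a} \in \Pi} X_{\vec{a}}$ into the $\delta$ distinct characters of $\Pi$ gives $\sum_{\vec{a}} \big|\mathsf{tr}(S_{\vec{a}})\big|^2 = \delta^2$, and hence $(\chi, \chi) = 1$. You instead argue directly: a nonzero invariant subspace is stable under the commuting unitaries $S_{\vec{a}}$, hence -- since the joint eigencharacters $\upsilon(-, \vec{c})$ are pairwise distinct by nondegeneracy of $\upsilon$, so every joint eigenspace is a single line $\CC H_{\vec{c}}$ -- it contains some $H_{\vec{c}}$, and the transitivity $R_{\vec{b}}(H_{\vec{c}}) = H_{\vec{c}+\vec{b}}$ forces it to be all of $\mathsf{W}_{K, \vec{\xi}}$. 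Both arguments rest on the same input (the $S$-action is the regular representation of $\Pi$, i.e.\ multiplicity-free with full character spectrum); yours is the more elementary Stone--von Neumann-style route that makes the mechanism explicit, while the paper's is the standard one-line certificate once characters are set up, and generalizes painlessly to situations where one does not have an explicit eigenbasis in hand.
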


\begin{proof} First note that for any $\vec{a}, \vec{b} \in K^{-1} \ZZ^g$ and $H \in \mathsf{W}_{K, \vec{\xi}}$ we have: 
$S_{\vec{a}}(H), R_{\vec{b}}(H) \in \mathsf{W}_{K, \vec{\xi}}$. Moreover,
$S_{\vec{a}}(H) = H$ (respectively, $R_{\vec{b}}(H) = H$) if $\vec{a} \in \ZZ^g$ (respectively, $\vec{b} \in \ZZ^g$). For a simplicity of notation, we identify 
an element of $K^{-1} \ZZ^g$ with its residue class in the group $\Pi$. It follows that the assignments $\Pi \ni \vec{a} \mapsto S_{\vec{a}}$ and $\Pi \ni \vec{b} \mapsto R_{\vec{b}}$ give representations of the group $\Pi$ on the vector space $\mathsf{W}_{K, \vec{\xi}}$. It follows from (\ref{E:HG2}) that $S_{\vec{a}} \cdot R_{\vec{b}} = \upsilon(\vec{a}, \vec{b}) R_{\vec{b}} \cdot S_{\vec{a}}$ for any $\vec{a}, \vec{b} \in \Pi$.  As a consequence, the assignment (\ref{E:HG3}) indeed provides a representation of the Heisenberg group $G_K$ on the vector space $\mathsf{W}_{K, \vec{\xi}}$.

\smallskip
\noindent
According to (\ref{E:scalarproductmv}), for any $\Phi \in \mathsf{W}_{K, \vec{\xi}}$ we have: 
$$
\big\lVert\Phi \big\rVert^2 = \iint\limits_{[0, 1]^{2g}} 
\exp\bigl(-2\pi  t(\vec{y}, K \vec{y} + 2\vec{a})\bigr)
 \big|\Phi(\vec{x}, \vec{y})\big|^2  d\vec{x} \wedge d \vec{y}.
$$
Hence, for any $\vec{a} \in \Pi$ we get: 
$$
\big\lVert S_{\vec{a}}(\Phi) \big\rVert^2 = \iint\limits_{[0, 1]^{2g}} 
\exp\bigl(-2\pi  t(\vec{y}, K \vec{y} + 2\vec{a})\bigr)
 \big|\Phi(\vec{x} + \vec{a}, \vec{y})\big|^2  d\vec{x} \wedge d \vec{y} = \big\lVert\Phi \big\rVert^2.
$$
An analogous but longer computation shows that
$$
\big\lVert R_{\vec{b}}(\Phi) \big\rVert^2 = \iint\limits_{[0, 1]^{2g}} 
\exp\bigl(-2\pi  t(\vec{y} + \vec{b}, K \vec{y} + K \vec{b} + 2\vec{a})\bigr)
 \big|\Phi(\vec{x}, \vec{y} + \vec{b})\big|^2  d\vec{x} \wedge d \vec{y} = \big\lVert\Phi \big\rVert^2
$$
for any $\vec{b} \in \Pi$. See also the proof of Lemma  \ref{L:MagneticTranslations} for an analogous argument in the case $g = 1$. Thus, we proved  that the constructed representation of $G_K$ on $\mathsf{W}_{K, \vec{\xi}}$ is indeed unitary. Its irreducibility will be shown below.

Recall that $\kR_{\vec{\xi}} = \kappa^\ast\bigl(\kT_{\vec\xi}\bigr)$, where 
$\kT_{\vec\xi}$ is the theta line bundle  introduced in the proof of  Proposition \ref{P:BasicTheta}.
Since $\kT_{\vec\xi}$ is  ample  and the canonical sheaf of $B$ is trivial, the Kodaira vanishing theorem \cite[Section I.1]{GH} implies that $H^p(B, \,  \kT_{\vec{\xi}}) = 0$ for all $p \ge 1$. As  $\kappa$ is an affine  morphism, it follows that  $\kR_{\vec{\xi}} := \kappa^\ast \kT_{\vec{\xi}}$ is an ample line bundle on $A$ as well. Since the canonical sheaf of $A$ is also trivial, the Kodaira vanishing theorem implies again that $H^p\bigl(A,  \kR_{\vec{\xi}}\bigr) = 0$ for  all $p \ge 1$.  By \cite[Corollary 3.6.6]{BirkenhakeLange} we have: $\chi\bigl(A, \kR_{\vec{\xi}}\bigr) = \mathrm{deg}(\kappa) \, \chi\bigl(B, \kT_{\vec{\xi}}\bigr) = \delta$. Hence,  the vector space $\mathsf{W}_{K, \vec\xi} \cong \Gamma\bigl(A,  \kR_{\vec{\xi}}\bigr)$ has dimension $\delta$, as asserted.

It follows from the transformation rules (\ref{E:MultiThetaFunctionsTransfRules}) that $H_{\vec{c}} \in \mathsf{W}_{K, \vec\xi}$ for all $\vec{c} \in \Pi$. Their linear independence will follow from the orthogonality (\ref{E:ThetaOrthogonal}), which will be shown later.

\smallskip
\noindent
Using (\ref{E:MultiThetaFunctionsTransfRules}) we get: 
\begin{equation*}
S_{\vec{a}}\bigl(H_{\vec{c}}\bigr) = \Theta\bigl[\vec{c}, \vec{0}\bigr]\bigl(K\vec{z} + \vec{\xi} + K \vec{a} \, \big|\, \Omega\bigr) = \exp\bigl(2\pi i (\vec{c}, K\vec{a})\bigr) \Theta\bigl[\vec{c}, \vec{0}\bigr]\bigl(K\vec{z} + \vec{\xi} \, \big|\, \Omega\bigr) = \upsilon(\vec{a}, \vec{c})  H_{\vec{c}}.
\end{equation*}
Using the formula
$$
\Theta\bigl[\vec{c}, \vec{0}\bigr]\bigl(\vec{z}\, \big|\, \Omega\bigr) = 
\exp\bigl(\pi i (\vec{c}, \Omega\vec{c}) + 2\pi i (\vec{c}, \vec{z})\bigr) \Theta\bigl(\vec{z} + \Omega\vec{c}\, \big|\, \Omega\bigr)
$$
we also deduce that  $R_{\vec{b}}\bigl(H_{\vec{c}}\bigr) = H_{\vec{c} + \vec{b}}$. Since $R_{\vec{b}}$ is a unitary operator, we conclude that
$\bigl\langle H_{\vec{c}_1}, H_{\vec{c}_1}\bigr\rangle = \bigl\langle H_{\vec{c}_2}, H_{\vec{c}_2}\bigr\rangle$ for all $\vec{c}_1, \vec{c}_2 \in \Pi$. 

For any $1 \le j \le g$ let $\vec{e}_j \in \ZZ^g$ be the corresponding element of the canonical basis and $\vec{u}_j = K^{-1} \vec{e}_j$. Using unitarity of $S_{\vec{u}_j}$ and the formula for its action on $H_{\vec{c}}$ we get:
$$
\bigl\langle H_{\vec{c}_1}, H_{\vec{c}_2}\bigr\rangle  = 
\bigl\langle S_{\vec{u}_j}(H_{\vec{c}_1}), S_{\vec{u}_j}(H_{\vec{c}_2})\bigr\rangle = \upsilon(\vec{c}_1 - \vec{c}_2, \vec{u}_j) \bigl\langle H_{\vec{c}_1}, H_{\vec{c}_2}\bigr\rangle.
$$
If $\vec{c}_1 \ne \vec{c}_2$ then there exists $1 \le j \le g$ such that 
$\upsilon(\vec{c}_1 - \vec{c}_2, \vec{u}_j) \ne 1$. Hence, $\bigl(H_{\vec{c}_1}, H_{\vec{c}_2}\bigr) = 0$, as asserted. It follows that the family of elements 
$\bigl(H_{\vec{c}}\bigr)_{\vec{c} \in \Pi}$ is linearly independent. Since the dimension of $\mathsf{W}_{K, \vec{\xi}}$ is already known to be 
$\delta = \big| \Pi\big|$, this shows that $\bigl(H_{\vec{c}}\bigr)_{\vec{c} \in \Pi}$ is indeed a basis of this space. 

It remains to prove that the action of the Heisenberg group $G_K$ on $\mathsf{W}_{K, \vec{\xi}}$ is irreducible. First note that for any $\vec{b} \in \Pi$ we have a group homomorphism
$$
\Pi \stackrel{X_{\vec{b}}}\lar \CC^\ast, \; \vec{a} \mapsto \upsilon(\vec{a}, \vec{b}) = \exp\bigl(2\pi i(\vec{a}, K\vec{b})\bigr),
$$
i.e.~$X_{\vec{b}}$ is a one-dimensional representation of $\Pi$. It is easy to see that $X_{\vec{b'}} \cong X_{\vec{b}''}$ if and only if $\vec{b'} = \vec{b''}$. Since  $\Pi$ is a finite abelian group, all its irreducible representations are one dimensional and there are precisely $\delta$ of them (up to an isomorphism). Next, $\Pi \ni \vec{a} \stackrel{S}\mapsto S_{\vec{a}}$ gives a representation of $\Pi$ on the vector space $\mathsf{W}_{K, \vec{\xi}}$. It follows from (\ref{E:ActionThetaBasis}) that this representations splits into a direct sum: 
$
S \cong \bigoplus\limits_{\vec{a} \in \Pi} X_{\vec{a}}.
$
From the corresponding character formula it follows that
$
\delta = \frac{1}{\delta} \sum\limits_{\vec{a} \in \Pi} \big| \mathsf{tr}(S_{\vec{a}})\big|^2.
$
Now, let $\chi$ be the character of the representation of $G_K$ on $\mathsf{W}_{K, \vec{\xi}}$. Taking into account the formulae (\ref{E:ActionThetaBasis}) we see that the trace of the actions of  $\bigl((\vec{a}, \vec{b}), \gamma\bigr)$ is zero provided $\vec{b} \ne \vec{0} \in \Pi$. Now, let $H_K \stackrel{\chi}\lar \CC$ be the character of the representation of $G_K$ on $\mathsf{W}_{K, \vec{\xi}}$. Since $\big|G_K\big| = \delta^3$, we get: 
$$
(\chi, \chi) = \frac{1}{\delta^3} \sum\limits_{\gamma \in \Gamma}\sum\limits_{\vec{a} \in \Pi} \big| \mathsf{tr}(\gamma S_{\vec{a}})\big|^2 = 1,
$$
where the fact that $|\gamma| = 1$ for all $\gamma \in \Gamma$ was used. This shows that the action of $G_K$ on $\mathsf{W}_{K, \vec{\xi}}$ is irreducible, as asserted. 
 \end{proof}

\begin{remark} One can prove by a straightforward computation that 
\begin{equation}\bigl\langle H_{\vec{c}_1}, H_{\vec{c}_2}\bigr\rangle= \kappa(\vec\xi) \cdot \delta_{\vec{c}_1,\vec{c}_2},
\end{equation} 
where $\kappa(\vec\xi)= \left(2t\delta\right)^{-\frac{g}{2}}\cdot 
\exp\bigl(2\pi t (\vec{a},K^{-1} \vec{a})\bigr)$; see \cite[Proposition 5.1]{BurbanKlevtsov}. 
\end{remark}

\section{Multi-layer model of FQHE on a torus}\label{S:WaveFunctions} 

\noindent
Following \cite{KeskiVakkuriWen, Wen}, 
the multi-layer torus model of FQHE is defined by the following datum.
 
\begin{definition}[Wen datum]\label{D:WenData} 
Let  $K \in \Mat_{g \times g}(\NN_0)$ be a matrix satisfying the following conditions.
\begin{itemize}
\item $K$ is symmetric and positive definite.
\item All diagonal entries of $K$ are either even ($\epsilon(K) := 1$,  bosonic case) or odd ($\epsilon(K) := -1$, fermionic case). 
\item All entries of the vector $\vec{u} := K^{-1} \vec{e} \in \mathbb{Q}^g$
are positive, where $\vec{e} = \left(\begin{array}{c} 1 \\ \vdots \\ 1\end{array}\right)$. 
\end{itemize}
 Next, let  $\vec\nn = \left(\begin{array}{c} n_1 \\ \vdots \\ n_g\end{array}\right) \in \NN^g$ be such that $K \vec\nn = d \vec{e}$ for some $d \in \NN$. 
Following \cite{Wen} (see also \cite{WenZee, KeskiVakkuriWen}) we call such a  pair $(K, \vec\nn)$ a \emph{Wen datum} and $K$ a \emph{Wen matrix}. We put:
$n := n_1 + \dots + n_g$, $\delta := \det(K)$ and $\rho:=  \dfrac{n\delta}{d}$. 
\end{definition}

\begin{lemma}\label{L:WenDatumCyclic} For any Wen datum $(K, \vec\nn)$ we have: $\vec{u} \in K^{-1} \ZZ^g$ and $\rho \in \ZZ$. Moreover, if $\mathsf{gcd}\bigl(\delta,  \rho\bigr) = 1$ then the class of $\vec{u}$ generates the group $\Pi = \Pi_K$.
\end{lemma}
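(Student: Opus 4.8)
The plan is to dispatch the three assertions separately; the first two are one-line computations and the real content lies in the third, which I would reduce to constructing a single character on $\Pi$.

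The containment $\vec{u} \in K^{-1}\ZZ^g$ is immediate from the definition $\vec{u} = K^{-1}\vec{e}$ together with $\vec{e} \in \ZZ^g$; its only purpose is to guarantee that the class $[\vec{u}]$ is a well-defined element of $\Pi = K^{-1}\ZZ^g/\ZZ^g$. For the integrality of $\rho$ I would use the adjugate identity $K^\sharp K = \delta I_g$: multiplying $K\vec\nn = d\vec{e}$ on the left by $K^\sharp$ gives $\delta\vec\nn = d\,K^\sharp\vec{e}$, hence $K^\sharp\vec{e} = \tfrac{\delta}{d}\vec\nn$. Pairing with $\vec{e}^t$ on the left yields $\vec{e}^t K^\sharp\vec{e} = \tfrac{\delta}{d}\,\vec{e}^t\vec\nn = \tfrac{n\delta}{d} = \rho$. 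As $K^\sharp \in \Mat_{g\times g}(\ZZ)$, the left-hand side is an integer equal to the sum of all entries of $K^\sharp$; this both proves $\rho \in \ZZ$ and reconciles the two descriptions of $\rho$ used in the paper.

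For the generation statement the key idea is to produce a group homomorphism carrying $[\vec{u}]$ to a unit of $\ZZ/\delta\ZZ$. I would work through the standard isomorphism $\Pi = K^{-1}\ZZ^g/\ZZ^g \cong \ZZ^g/K\ZZ^g$ given by multiplication by $K$, under which $[\vec{u}] = [K^{-1}\vec{e}]$ corresponds to $[\vec{e}]$, and define $\phi\colon \ZZ^g/K\ZZ^g \lar \ZZ/\delta\ZZ$ by $\phi\bigl([\vec{v}]\bigr) := \vec{e}^t K^\sharp\vec{v} \bmod \delta$. The only step that is not purely formal is the well-definedness of $\phi$, and this is exactly where the adjugate reappears: if $\vec{v} = K\vec{w}$ then $K^\sharp\vec{v} = \delta\vec{w}$, so $\vec{e}^t K^\sharp\vec{v} = \delta\,\vec{e}^t\vec{w} \equiv 0 \pmod{\delta}$. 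By the computation of the second paragraph, $\phi\bigl([\vec{e}]\bigr) = \rho \bmod \delta$. Now if $\gcd(\delta,\rho)=1$ then $\rho \bmod\delta$ is a unit, hence of order $\delta$ in $\ZZ/\delta\ZZ$; since the order of $\phi\bigl([\vec{e}]\bigr)$ divides that of $[\vec{e}]$, the latter is a multiple of $\delta$, while it also divides $|\Pi|=\delta$. Therefore $[\vec{e}]$ --- equivalently $[\vec{u}]$ --- has order exactly $\delta = |\Pi|$ and generates $\Pi$. The whole difficulty is thus concentrated in recognising $\rho$ as the value of a well-chosen character on the distinguished class; once that character is available, the order argument via Lagrange's theorem is automatic.
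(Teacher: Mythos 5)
Your proof is correct. The first two assertions are handled essentially as in the paper: both arguments rest on the identity $\delta\vec{u} = K^\sharp\vec{e}$ and the integrality of the adjugate, and your pairing $\vec{e}^t K^\sharp \vec{e} = \rho$ is the same computation as the paper's ``sum the entries of $\delta\vec{u}$''. For the generation statement, however, your packaging is genuinely different. The paper argues by contradiction, staying with the particle data: writing $\vec{u} = \frac{1}{d}\vec{\nn}$, it assumes the order of $[\vec{u}]$ is a proper divisor $\bar{\delta}$ of $\delta$, deduces $\frac{n_i \bar{\delta}}{d} \in \ZZ$ componentwise, sums to get $\frac{n\bar{\delta}}{d} \in \ZZ$, and concludes that $s = \delta/\bar{\delta} > 1$ divides both $\delta$ and $\rho$, contradicting primality. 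You instead transport the problem to $\ZZ^g/K\ZZ^g$ and construct the homomorphism $\phi\bigl([\vec{v}]\bigr) = \vec{e}^t K^\sharp \vec{v} \bmod \delta$, whose value at $[\vec{e}]$ is the unit $\rho \bmod \delta$, forcing the order of $[\vec{e}]$ to be a multiple of $\delta$. The underlying arithmetic is identical --- both proofs reduce to the observation that $m\vec{u} \in \ZZ^g$ implies $\delta \mid m\rho$ --- but your formulation has two advantages: it is intrinsic to $K$ (once $\rho = \vec{e}^t K^\sharp \vec{e}$ is established, the vector $\vec{\nn}$ plays no role, so the generation statement holds for any symmetric positive definite integer matrix with $\gcd(\delta,\rho)=1$), and it is a direct argument, giving in general the lower bound $\delta/\gcd(\delta,\rho)$ on the order of $[\vec{u}]$ rather than only a contradiction. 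The paper's version is marginally shorter and makes visible exactly where the Wen-datum condition $K\vec{\nn} = d\vec{e}$ enters.
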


\begin{proof} 
By the definition of $\vec{u}$ we have: 
 $$\vec{u}  = \frac{1}{d}\left(\begin{array}{c} n_1 \\ \vdots \\ n_g\end{array}\right) = 
K^{-1} \vec{e} = \frac{1}{\delta} K^\sharp \vec{e},
$$
where $K^\sharp$ is the adjunct matrix of $K$ (i.e. $K K^\sharp = \delta I$, where $I$ is the identity matrix). It is clear that $\vec{u} \in K^{-1} \ZZ^g$.
Since $K^\sharp \in \Mat_{g \times g}(\ZZ)$, we conclude  that  $\delta \vec{u} \in \ZZ^g$. The sum of all entries of $\delta \vec{u}$ is an integer, hence  
$\frac{n\delta}{d} \in \ZZ$, as asserted.

Suppose now that $\mathsf{gcd}\bigl(\delta,  \rho\bigr) = 1$. Since $|\Pi| = \delta$, the order of $[\vec{u}] \in \Pi$ is a divisor of $\delta$. If $\delta = s \bar{\delta}$ with $s > 1$ and $\bar\delta \vec{u} \in \ZZ^g$ then $\frac{n_i \bar\delta}{d} \in \ZZ$ for all $1\le i \le g$. It follows that 
$\frac{n\bar\delta}{d} \in \ZZ$ and $s \big| \mathsf{gcd}\bigl(\delta,  \rho\bigr)$, yielding a contradiction. 
\end{proof}

\begin{remark} The identity $\vec{n} = \dfrac{d}{\delta} K^\sharp \vec{e}$ implies that
\begin{equation}\label{E:SumAdjunct}
\sum_{i, j =1}^g K_{ij}^\sharp =  \frac{n\delta}{d} = \rho.
\end{equation}
In particular, $\rho$ is determined by Wen's matrix $K$ itself. We shall say that $K$ is \emph{primary} if $\mathsf{gcd}\bigl(\delta,  \rho\bigr) = 1$.
\end{remark}

\begin{example}\label{E:JainWen} For any $p, g \in \NN$ consider the following matrix
\begin{equation}\label{E:WenFroehlich}
K = K_{p, g} = 
\left(
\begin{array}{cccc}
p+1 & p & \dots & p \\
p & p+1 & \dots & p \\
\vdots & \vdots & \ddots & \vdots \\
p & p & \ddots & p+1
\end{array}
\right)
\in \mathsf{Mat}_{g \times g}(\NN).
\end{equation}
Then $K$ has precisely two eigenvalues: $1$ (with multiplicity $g-1$) and $\delta = pg +1$ (the corresponding eigenvector is $\vec{e}$). Hence, $K$ is positive definite and its determinant is $\delta$. We can take 
$\vec{\nn}= m \vec{e}$  for any $m \in \NN$ (hence, $n = mg$). It follows that  $d= m \delta$ and 
$\rho = \frac{n \delta}{d} = g$ is coprime to $pg +1$. Hence, $K$ is a primary Wen's matrix.

Let $N = \left(\begin{array}{ccc} 
1 & \dots & 1 \\
\vdots & \ddots & \vdots \\
1 & \dots & 1
\end{array}\right)$. Then we have: $K = I + p N$. Since $N^2 = g N$, we get: 
$(I + pN)(\delta I - pN) = \delta I$. We get the following expression for the adjunct matrix $K^\sharp$: 
\begin{equation}\label{E:Kadjunct}
K^\sharp = 
\left(
\begin{array}{cccc}
\delta - p  & -p & \dots & -p \\
-p &  \delta- p & \dots & -p \\
\vdots & \vdots & \ddots & \vdots \\
-p & -p & \dots & \delta - p
\end{array}
\right).
\end{equation}
In particular, we see that  $\sum\limits_{i, j = 1}^g K^\sharp_{ij} = g$, which of course matches with the formula (\ref{E:SumAdjunct}).

The matrix $K_{p, g}$ appeared in the context of multi-layer models in a work of Wen and Zee \cite{WenZee}. Shortly afterwards, Fr\"ohlich and Zee pointed out in \cite[Section 5]{FZ} that   $K_{p, g}$ is connected with the root system of type $A_{p-1}$. It would be very interesting to establish a closer relation between the  multi-layer torus models of Keski-Vakkuri and Wen  which we shall introduce below with the approach of Fr\"ohlich et al. \cite{Froehlich} to the FQHE.
\end{example}

\begin{definition}\label{D:WenDatum}
Let $E$ be a complex torus and $(K, \vec\nn)$ be a Wen datum. Then we put:

\noindent
\begin{itemize}
\item $X = \underbrace{E \times \dots \times E}_{n_1  \, \mbox{\scriptsize{\sl times}}} \times \dots \times \underbrace{E \times \dots \times E}_{n_g  \, \mbox{\scriptsize{\sl times}}}$ and $n = n_1 + \dots + n_g = \mathsf{dim}(X)$. 
\item $D_{K, \vec\nn} = 
\prod\limits_{k = 1}^g \left(\prod\limits_{1 \le p < q \le n_k} 
\vartheta\left(z_p^{(k)} - z_q^{(k)} \right)\right)^{K_{kk}} \cdot \prod\limits_{ 1 \le k < l \le g} \left(\prod\limits_{p = 1}^{n_k} \prod\limits_{q = 1}^{n_l}
\vartheta\left(z_p^{(k)} - z_q^{(l)} \right)\right)^{K_{kl}},
$
where $\left(z_1^{(1)}, \dots, z_{n_1}^{(1)}; \dots; z_1^{(g)}, \dots, z_{n_g}^{(g)}\right)$ are the standard local coordinates on the complex torus $X$. 
\item For any $1 \le k \le g$ we put: $w_k = z_1^{(k)} + \dots + z_{n_k}^{(k)}$ and  $\vec{w} = (w_1, \dots, w_g)$. 
\item For any $\vec{c} \in \Pi$  and $\vec\xi \in \CC^g$, the  corresponding wave function of Keski-Vakkuri and Wen \cite{KeskiVakkuriWen} is given by the expression 
\begin{equation}\label{E:KV-Wen-wavefunct}
\Phi_{\vec{c}}\left(z_1^{(1)}, \dots, z_{n_1}^{(1)}; \dots;  z_1^{(g)}, \dots, z_{n_g}^{(g)}\right) =  \Theta\bigl[\vec{c}, \vec{0}\bigr](K \vec{w} + \vec{\xi} | \Omega) \cdot D_{K, \vec\nn}.
\end{equation}
\end{itemize}
\end{definition}

\begin{remark} In the case $g =1$, the functions (\ref{E:KV-Wen-wavefunct}) coincide with wave functions (\ref{E:HRSpace}) of Haldane and Rezayi. For $g > 1$, $1 \le k \le g$ and $\vec{c} \in \Pi$, the wave function $\Phi_{\vec{c}}$ is anti-symmetric for $\epsilon(K) = -1$ and symmetric for $\epsilon(K) = 1$ under the permutations of  $\bigl(z_1^{(k)}, \dots, z^{(k)}_{n_k}\bigr)$. However, they are neither symmetric nor anti-symmetric with respect to the arbitrary permutations of  the variables $\left(z_1^{(1)}, \dots, z^{(1)}_{n_1}, \dots, z_1^{(g)}, \dots, z^{(g)}_{n_g}\right)$. The idea to use non-antisymmetric wave functions with such type of symmetry  in the framework of the FQHE goes back to a work of Halperin \cite{Halperin}. Ancestors of the wave functions  (\ref{E:KV-Wen-wavefunct}) for the FQHE on a plane or on  a sphere were first introduced by Wen and Zee \cite{WenZee}.
\end{remark}

\noindent
We have the following straightforward generalization of Lemma \ref{L:HRwavefunction}.

\begin{lemma}\label{L:KWwavefunction}  Consider the following line bundle $\kW_{K, \vec{\nn}, \vec{\xi}}$ on $X$:
\begin{equation}\label{E:linebundleNbody}
\kW_{K, \vec{\nn}, \vec{\xi}} := \left\{
\begin{array}{cl}
\underbrace{\kL_{d, \xi_1} \boxtimes \dots \kL_{d, \xi_1}}_{n_1  \; \mbox{\scriptsize{\sl times}}} \boxtimes \dots \boxtimes \underbrace{\kL_{d, \xi_g} \boxtimes \dots \boxtimes \kL_{d, \xi_g}}_{n_g  \; \mbox{\scriptsize{\sl times}}} 
& \mbox{\rm if} \, \epsilon(K) + d \in 2 \NN \\
\underbrace{\kL^\sharp_{d, \xi_1} \boxtimes \dots \kL^\sharp_{d, \xi_1}}_{n_1  \; \mbox{\scriptsize{\sl times}}} \boxtimes \dots \boxtimes \underbrace{\kL^\sharp_{d, \xi_g} \boxtimes \dots \boxtimes \kL^\sharp_{d, \xi_g}}_{n_g  \; \mbox{\scriptsize{\sl times}}} & \mbox{\rm if} \, \epsilon(K) + d \in 2 \NN +1.
\end{array}
\right. 
\end{equation}
Then  the wave functions $\bigl(\Phi_{\vec{c}}\bigr)_{\vec{c} \in \Pi}$ of Keski-Vakkuri and Wen belong to the ground state of the analogue of the many-body magnetic Schr\"odinger operator (\ref{E:manybodySchroedinger}).
\end{lemma}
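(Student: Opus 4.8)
The plan is to reduce the statement, exactly as in the proof of Lemma~\ref{L:HRwavefunction}, to the purely algebraic claim that each $\Phi_{\vec c}$ is a global holomorphic section of the line bundle $\kW_{K,\vec\nn,\vec\xi}$ on $X$; once this is established, the identification of the space of holomorphic sections with the ground state of the many-body magnetic Schr\"odinger operator (the Bochner Laplacian of $\kW_{K,\vec\nn,\vec\xi}$) is the same as in the single-layer case. So the entire content is to verify the quasi-periodicity of $\Phi_{\vec c}$ under the two generators $z_r^{(k)} \mapsto z_r^{(k)} + 1$ and $z_r^{(k)} \mapsto z_r^{(k)} + \tau$ of the lattice in each of the $n$ variables, and to check that the resulting automorphy factor is the one defining the $(r,k)$-th tensor factor of $\kW_{K,\vec\nn,\vec\xi}$, namely $\kL_{d,\xi_k}$ (resp.\ $\kL^\sharp_{d,\xi_k}$). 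The identity $K\vec\nn = d\,\vec e$ will be the crucial input throughout.

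For the shift $z_r^{(k)} \mapsto z_r^{(k)} + 1$ I would argue separately on the two factors of $\Phi_{\vec c} = \Theta[\vec c,\vec 0](K\vec w + \vec\xi \mid \Omega)\cdot D_{K,\vec\nn}$. Since $w_k \mapsto w_k + 1$, the argument of $\Theta$ is shifted by $K\vec e_k \in \ZZ^g$, and the first rule in (\ref{E:MultiThetaFunctionsTransfRules}) produces the factor $\exp(2\pi i\,\vec c^{\,t}K\vec e_k)$; because $\vec c \in K^{-1}\ZZ^g$ one has $\vec c^{\,t}K\vec e_k \in \ZZ$, so this factor equals $1$ and $\Theta$ is invariant. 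For $D_{K,\vec\nn}$ I would use $\vartheta(z+1) = -\vartheta(z)$: the variable $z_r^{(k)}$ occurs in $n_k-1$ diagonal factors (of power $K_{kk}$) and in $n_l$ factors of each off-diagonal block $(k,l)$ and $(l,k)$ (of power $K_{kl}$), so the overall sign is $(-1)^{e}$ with $e = (n_k-1)K_{kk} + \sum_{l\ne k}n_l K_{kl} = (K\vec\nn)_k - K_{kk} = d - K_{kk}$. This sign is $+1$ or $-1$ precisely according to the case distinction defining $\kW_{K,\vec\nn,\vec\xi}$.

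The shift $z_r^{(k)} \mapsto z_r^{(k)} + \tau$ is the substantial computation. Here $K\vec w$ is shifted by $\tau K\vec e_k = \Omega\vec e_k$, so by the second rule in (\ref{E:MultiThetaFunctionsTransfRules}) the factor $\Theta$ acquires $\exp\bigl(-2\pi i (K\vec w)_k - 2\pi i \xi_k - \pi i\tau K_{kk}\bigr)$. For $D_{K,\vec\nn}$ I would apply $\vartheta(z+\tau) = -\varphi(z)\vartheta(z)$ to each factor containing $z_r^{(k)}$, taking care of whether $z_r^{(k)}$ sits in the first or the second argument of $\vartheta$ (the latter producing a shift by $-\tau$, handled by the same transformation rule together with the oddness of $\vartheta$). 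Summing the exponents over all factors, the coefficient of $z_r^{(k)}$ collapses to $-2\pi i\,d\,z_r^{(k)}$ (again by $\sum_l K_{kl}n_l = d$), the constant $\tau$-term to $-\pi i\tau(d - K_{kk})$, and there appears a spurious term $+2\pi i (K\vec w)_k$; the overall sign is once more $(-1)^{d - K_{kk}}$. Multiplying by the $\Theta$-contribution, the two $(K\vec w)_k$-terms cancel and the two $\tau K_{kk}$-terms cancel, leaving exactly $(-1)^{d-K_{kk}}\exp(-2\pi i\xi_k)\,\varphi(z_r^{(k)})^{d}$, which by Proposition~\ref{P:LibeBundlesTorus} is the automorphy factor of $\kL_{d,\xi_k}$ (resp.\ $\kL^\sharp_{d,\xi_k}$).

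Since both generators produce the same sign $(-1)^{d-K_{kk}}$, and since all diagonal entries $K_{kk}$ share a common parity (so the sign is independent of $k$), the functions $\Phi_{\vec c}$ are consistently global holomorphic sections of $\kW_{K,\vec\nn,\vec\xi}$, the twisted or untwisted form being selected exactly as in the statement; the passage to the ground state then proceeds as in Lemma~\ref{L:HRwavefunction}. I expect the main obstacle to be the bookkeeping in the $\tau$-shift: correctly accounting for the first-versus-second-slot occurrences of $z_r^{(k)}$ in $\vartheta$, and verifying that the $\vec w$-dependent and $\tau K_{kk}$ contributions coming from $\Theta$ and from $D_{K,\vec\nn}$ cancel exactly. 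Everything else is a direct generalization of the single-layer computation carried out in Lemma~\ref{L:HRwavefunction}.
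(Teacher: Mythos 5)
Your proof is correct and is essentially the paper's own argument carried out in full: the paper's proof of this lemma consists precisely of the two quasi-periodicity transformation rules you derive (stated there as ``a straightforward computation shows''), followed by the same reduction to the single-layer reasoning of Lemma~\ref{L:HRwavefunction}, and your bookkeeping for the $\tau$-shift (the cancellation of the $(K\vec{w})_k$-terms and of the $\pi i \tau K_{kk}$-terms, leaving $(-1)^{d-K_{kk}}\exp(-2\pi i\xi_k)\varphi\bigl(z_p^{(k)}\bigr)^d$) is exactly right. One remark: your sign $(-1)^{d-K_{kk}}$ is the correct one, and it agrees with the paper's $\epsilon=(-1)^{\epsilon(K)+d}$ only if $\epsilon(K)$ is read as the common parity ($0$ or $1$) of the diagonal entries of $K$ rather than literally as the value $\pm 1$ fixed in Definition~\ref{D:WenData} --- under the literal reading the paper's exponent would not even distinguish the bosonic from the fermionic case --- so your formulation, together with the observation that the Wen parity condition on the $K_{kk}$ is what makes the case distinction in (\ref{E:linebundleNbody}) well posed, is the more precise one.
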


\begin{proof} Let $\epsilon := (-1)^{\epsilon(K) + d}$. A straightforward computation shows that for any $\vec{c} \in \Pi$, $1 \le k \le g$ and $1 \le p \le n_k$ we have:
$$
\Phi_{\vec{c}}\left(z_1^{(1)}, \dots, z_{p}^{(k)} + 1,  \dots, z_{n_g}^{(g)}\right) = \epsilon \Phi_{\vec{c}}\left(z_1^{(1)}, \dots,  z_{p}^{(k)},  \dots, z_{n_g}^{(g)}\right)
$$
and 
$$
\Phi_{\vec{c}}\left(z_1^{(1)}, \dots, z_{p}^{(k)} + \tau,  \dots, z_{n_g}^{(g)}\right) = \epsilon  \exp(\-2\pi i \xi_{k}) 
\, \varphi(z_{p}^{(k)})^d \, \Phi_{\vec{c}}\left(z_1^{(1)}, \dots,  z_{p}^{(k)},  \dots, z_{n_g}^{(g)}\right),
$$
implying the result. 
\end{proof}

\smallskip
\noindent
We need the following slight generalization of Lemma \ref{L:SectionsPullback}.

\begin{lemma}\label{L:sectionsofpullback2} Consider the following morphism of abelian varieties:
$$
X \stackrel{\mu}\lar A, \bigl(z_1^{(1)}, \dots, z_{n_1}^{(1)};  \dots; z_1^{(g)}, \dots, z_{n_g}^{(g)}\bigr) \mapsto \bigl(z_1^{(1)}+  \dots + z_{n_1}^{(1)},   \dots,  z_1^{(g)}+ \dots + z_{n_g}^{(g)}\bigr).
$$
Then for any vector bundle $\kV$ on $A$, the  map $\Gamma(A, \kV) \stackrel{\mu^\ast}\lar \Gamma\bigl(X, \mu^\ast \kV\bigr)$ is an isomorphism. 
\end{lemma}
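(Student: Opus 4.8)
The plan is to imitate the proof of Lemma \ref{L:SectionsPullback} almost verbatim, the only genuinely new ingredient being a slightly more involved computation of the derived pushforward $R\mu_\ast\kO_X$. First I would reduce to an equality of dimensions. Since $\mu$ is surjective, the unit $\kO_A \lar \mu_\ast\kO_X$ is injective; tensoring with the locally free sheaf $\kV$ and using the projection formula $\kV \otimes \mu_\ast\kO_X \cong \mu_\ast\mu^\ast\kV$ exhibits $\kV$ as a subsheaf of $\mu_\ast\mu^\ast\kV$. Taking global sections identifies $\mu^\ast$ with the induced injection $\Gamma(A,\kV) \hookrightarrow \Gamma\bigl(X,\mu^\ast\kV\bigr)$, so it suffices to prove $\dim_\CC\Gamma\bigl(X,\mu^\ast\kV\bigr) = \dim_\CC\Gamma(A,\kV)$.

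Since $X$ is an abelian variety of dimension $n$ with trivial canonical sheaf, Serre duality together with the adjunction $(\mu^\ast, R\mu_\ast)$ (note $\mu$ is flat, so $L\mu^\ast = \mu^\ast$) gives
\begin{equation*}
\bigl(\Gamma(X,\mu^\ast\kV)\bigr)^\ast \cong \Ext^n_X\bigl(\mu^\ast\kV,\kO_X\bigr) \cong \Hom_{D^b(\Coh(A))}\bigl(\kV, R\mu_\ast\kO_X[n]\bigr),
\end{equation*}
so everything reduces to computing the complex $R\mu_\ast\kO_X$. To do this I would factor $\mu = \pi\circ\varrho$, where $\varrho = \varrho_1 \times \dots \times \varrho_g$ is the automorphism of $X$ acting on the $k$-th block by $\varrho_k\bigl(z_1^{(k)},\dots,z_{n_k}^{(k)}\bigr) = \bigl(w_k, z_2^{(k)}, \dots, z_{n_k}^{(k)}\bigr)$, and $\pi\colon X \cong A \times \widetilde{X} \lar A$ is the projection onto the first coordinate of each block, with $\widetilde{X} := E^{n-g}$. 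As $\varrho$ is an isomorphism, $R\varrho_\ast\kO_X \cong \kO_X$, so by the K\"unneth formula
\begin{equation*}
R\mu_\ast\kO_X \cong R\pi_\ast\kO_X \cong \kO_A \otimes R\Gamma\bigl(\widetilde{X},\kO_{\widetilde{X}}\bigr) \cong \bigoplus_{j=0}^{n-g}\bigl(\kO_A[-j]\bigr)^{\oplus\binom{n-g}{j}},
\end{equation*}
where the last isomorphism uses that $\widetilde{X}$ is a product of $n-g$ elliptic curves.

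Plugging this in, the contribution of the $j$-th summand is $\Hom_{D^b(\Coh(A))}\bigl(\kV,\kO_A[n-j]\bigr) = \Ext^{n-j}_A(\kV,\kO_A)$, which vanishes whenever $n-j > \dim(A) = g$; since the summation index satisfies $0 \le j \le n-g$, i.e.\ $n-j \ge g$, only the top term $j = n-g$ (for which $n-j=g$ and $\binom{n-g}{n-g}=1$) survives. Hence $\bigl(\Gamma(X,\mu^\ast\kV)\bigr)^\ast \cong \Ext^g_A(\kV,\kO_A) \cong \bigl(\Gamma(A,\kV)\bigr)^\ast$ by Serre duality on $A$, which yields the required equality of dimensions and, combined with the injectivity already established, the isomorphism. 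The only point requiring care—the main, if mild, obstacle—is exactly this degree bookkeeping: one must check that the cohomological degrees produced by $R\mu_\ast\kO_X$ and the dimension bound $\dim(A)=g$ conspire to leave precisely one nonvanishing term, which is what replaces the vanishing ``$\Hom_E(\kV,\kO_E[j])=0$ for $j\ge 2$'' used in the single-layer case.
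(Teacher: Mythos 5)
Your proof is correct and follows essentially the same route as the paper: injectivity from surjectivity of $\mu$, Serre duality plus adjunction to reduce to computing $R\mu_\ast\kO_X$, the block-wise shear automorphism $\varrho$ and projection to get $R\mu_\ast\kO_X \cong \bigoplus_{j=0}^{n-g}\bigl(\kO_A[-j]\bigr)^{\oplus\binom{n-g}{j}}$, and the vanishing $\Ext^{n-j}_A(\kV,\kO_A)=0$ for $n-j>g$ to isolate the single surviving term. The paper compresses the computation of $R\mu_\ast\kO_X$ into the phrase ``analogously to the proof of Lemma \ref{L:SectionsPullback}''; your explicit factorization $\mu = \pi\circ\varrho$ is exactly the intended argument.
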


\begin{proof}
Since the morphism  $\mu$ is surjective, the induced linear map $\mu^\ast$ is injective.  Let $n = n_1 + \dots + n_g$. Applying the Serre duality and adjunction we get: 
$$
\bigl(\Gamma(X, \mu^\ast \kV)\bigr)^\ast  \cong \Hom_X(\mu^\ast\kV , \kO_X[n]) \cong 
\Hom_A(\kV, R\mu_\ast\kO_X[n]).
$$
Analogously to the proof of Lemma \ref{L:SectionsPullback} one can show that
$
R\mu_\ast\kO_X \cong \bigoplus\limits_{j = 0}^{n-g} 
\left(\kO_A[-j]\right)^{\oplus \binom{n-g}{j}}.
$
Since $\Hom_A(\kV, \kO_E[p]) = 0$ for $p > g$, we conclude that
$$
\Hom_A(\kV, R\mu_\ast\kO_X[n]) \cong \bigoplus\limits_{j = 0}^{n-g} \left(\Hom_A(\kV, \kO_A[n-j])\right)^{\oplus \binom{n-g}{j}} \cong \Hom_A(\kV, \kO_A[g]) \cong \bigl(\Gamma(A, \kV)\bigr)^\ast. 
$$
It follows that $\Gamma(A, \kV) \stackrel{\mu^\ast}\lar \Gamma\bigl(X, \mu^\ast \kV\bigr)$ is an injective linear map of vector spaces of equal dimension, hence it is an isomorphism. 
\end{proof}

\begin{definition} Let $\bigl(E, (K, \vec\nn)\bigr)$ be a datum as in Definition \ref{D:WenDatum} (\emph{KV-W datum} for short).
\begin{itemize}
\item For any $1 \le k \le g$ and $1 \le p < q \le n_k$ we put:
$$
\Xi^{(k)}_{p, q} := \left\{\bigl(z_1^{(1)}, \dots, z_{n_1}^{(1)};  \dots; z_1^{(g)}, \dots, z_{n_g}^{(g)}\bigr) \in X \, \big| \, z_p^{(k)} = z_q^{(k)} \right\}.
$$
\item Similarly, for any $1 \le k < l \le g$, $1 \le p \le n_k$  and $1 \le q \le n_l$ we put:
$$
\Xi_{p, q}^{(k, l)} := \left\{\bigl(z_1^{(1)}, \dots, z_{n_1}^{(1)};  \dots; z_1^{(g)}, \dots, z_{n_g}^{(g)}\bigr) \in X \, \big| \, z_p^{(k)} = z_q^{(l)} \right\}.
$$
\item The matrix $K$ from Definition \ref{D:WenData} determines  the divisor 
\begin{equation}\label{E:WenDivisor}
\Xi_K:= \sum\limits_{k = 1}^g K_{kk}\sum\limits_{1 \le p < q \le n_k} \left[\Xi_{p, q}^{(k)}\right] + 
\sum\limits_{1 \le k < l \le g} K_{kl} \sum\limits_{p = 1}^{n_k} \sum\limits_{q = 1}^{n_l} \left[\Xi_{p, q}^{(k, l)}\right]
\end{equation}
on the abelian variety $X$. 
\end{itemize}
We define the  space of wave functions of Keski-Vakkuri and Wen by the formula
\begin{equation}\label{E:SpaceKV-WenWaveFunctions}
\mathsf{V}_{K, \vec{\nn}, \vec{\xi}} := \Gamma\bigl(X, \kW_{K, \vec{\nn}, \vec{\xi}}(-[\Xi_K])\bigr).
\end{equation}
\end{definition}

\begin{remark}\label{R:CenterMassAllWave}
By Lemma \ref{L:KWwavefunction} we have: $\kW_{K, \vec{\nn}, \vec{\xi}}\,(-[\Xi_K])\cong \mu^\ast \kR_{\vec\xi}$.
Applying Lemma \ref{L:sectionsofpullback2}, we obtain: 
\begin{equation}\label{E:CentralWaveIdentify}
\mathsf{V}_{K, \vec{\nn}, \vec{\xi}} \cong \Gamma\bigl(X, \mu^\ast(\kR_{\vec\xi})\bigr) \cong \Gamma\bigl(A, \; \kR_{\vec\xi}\bigr) = \mathsf{W}_{K, \vec{\xi}}, 
\end{equation}
see (\ref{E:HolomSectionsDegreeg}) for a description of $\mathsf{W}_{K, \vec{\xi}}$.
We put: $\mathsf{W}_{K, \vec{\nn}, \vec{\xi}} := \Gamma(X, \kW_{K, \vec{\nn}, \vec{\xi}})$. By the K\"unneth formula we have: 
\begin{equation}\label{E:FullSpaceOfWaveFunctions}
\mathsf{W}_{K, \vec{\nn}, \vec{\xi}} =
\left\{
\begin{array}{cl}
\underbrace{\mathsf{W}_{d, \xi_1} \otimes \dots \otimes \mathsf{W}_{d, \xi_1}}_{n_1  \; \mbox{\scriptsize{\sl times}}} \otimes \dots \otimes \underbrace{\mathsf{W}_{d, \xi_g} \otimes \dots \otimes \mathsf{W}_{d, \xi_g}}_{n_g  \; \mbox{\scriptsize{\sl times}}} 
&  \, \mbox{\rm if} \; \;   \epsilon(K) + d \in 2 \NN \\
\underbrace{\mathsf{W}^\sharp_{d, \xi_1} \otimes  \dots \otimes \mathsf{W}^\sharp_{d, \xi_1}}_{n_1  \; \mbox{\scriptsize{\sl times}}} \otimes  \dots \otimes  \underbrace{\mathsf{W}^\sharp_{d, \xi_g} \otimes  \dots \otimes \mathsf{W}^\sharp_{d, \xi_g}}_{n_g  \; \mbox{\scriptsize{\sl times}}} &  \, \mbox{\rm otherwise}.
\end{array}
\right. 
\end{equation}
As a consequence, we obtain the following concrete description of the vector space of Keski-Vakkuri--Wen wave functions:
\begin{equation}
\mathsf{V}_{K, \vec{\nn}, \vec{\xi}} = \bigl\langle 
\Phi_{\vec{c}} \, | \;  \vec{c} \in \Pi \bigr\rangle_{\CC} \subset \mathsf{W}_{K, \vec{\nn}, \vec{\xi}}
\end{equation}
where $\Phi_{\vec{c}}$ is the function given by (\ref{E:KV-Wen-wavefunct}). 
In what follows,  $\mathsf{V}_{K, \vec{\nn}, \vec{\xi}}$ will be  endowed with the \emph{restricted  hermitian scalar product} of the  ambient  space  $\mathsf{W}_{K, \vec{\nn}, \vec{\xi}}$.
\end{remark}

\begin{remark}\label{R:Matching} Depending on the parity of $\epsilon(K) + d$, we have an unitary action of the Heisenberg group $G_d$ (respectively, of its central extension 
$\widetilde{G}_d$) on the space $\mathsf{W}_{K, \vec{\nn}, \vec{\xi}}$, given by the magnetic translations (\ref{E:MagneticTranslationsFormulae}). Note that the formulae for the actions of $T_1$ and $T_2$ do not depend on the parity of $\epsilon(K) + d$. Moreover, it follows from (\ref{E:SpaceKV-WenWaveFunctions}) that 
$\mathsf{V}_{K, \vec{\nn}, \vec{\xi}}$ is an invariant subspace of this group action. 
\end{remark}

\begin{lemma}\label{L:Magnetic} Let $(E, (K, \vec\nn))$ be an arbitrary KV-W datum and $\vec\xi \in \CC^g$. Then the following diagrams are commutative:
\begin{equation}\label{E:TwoMagnetic}
\begin{array}{ccc}
\xymatrix{
\mathsf{W}_{K, \vec{\xi}} \ar[r]^-{\iota} \ar[d]_-{S_{\vec{u}}}&  \mathsf{V}_{K, \vec\nn, \vec\xi} \ar[d]^-{T_1}\\
\mathsf{W}_{K, \vec{\xi}} \ar[r]^-{\iota} &  \mathsf{V}_{K, \vec\nn, \vec\xi}
}
& \mbox{\rm \begin{tabular}{c} \\  \\ \\ and \\ \end{tabular}} &
\xymatrix{
\mathsf{W}_{K, \vec{\xi}} \ar[r]^-{\iota} \ar[d]_-{R_{\vec{u}}}&  \mathsf{V}_{K, \vec\nn, \vec\xi} \ar[d]^-{T_2}\\
\mathsf{W}_{K, \vec{\xi}} \ar[r]^-{\iota} &  \mathsf{V}_{K, \vec\nn, \vec\xi}
}
\end{array}
\end{equation}
where $\vec{u} = K^{-1}\vec{e}$ and  $\mathsf{W}_{K, \vec{\xi}} \stackrel{\iota}\lar \mathsf{V}_{K, \vec\nn, \vec\xi}$ is the isomorphism of vector spaces  given by the rule 
$\iota(\Phi) = \Phi \cdot D_{K, \vec\nn}$. In particular, we have the following formulae for the action of the magnetic translations $T_1$ and $T_2$ on the basis $\bigl(\Phi_{\vec{c}}\bigr)_{\vec{c} \in \Pi}$ of the vector space 
$\mathsf{V}_{K, \vec\nn, \vec\xi}$:
\begin{equation}\label{E:MagneticBasis}
T_1(\Phi_{\vec{c}}) = \upsilon(\vec{u}, \vec{c}) \Phi_{\vec{c}} \quad \mbox{\rm and} \quad T_2(\Phi_{\vec{c}}) = \Phi_{\vec{c} + \vec{u}},
\end{equation}
where $\upsilon$ is the form given by (\ref{E:FormUpsilon}). 
\end{lemma}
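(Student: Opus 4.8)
The plan is to reduce everything to the explicit form of the isomorphism $\iota$, combined with the transformation rules already established for the center-of-mass operators $S_{\vec u}, R_{\vec u}$ and for the magnetic translations $T_1, T_2$. Recall from Remark \ref{R:CenterMassAllWave} that $\iota$ sends $\Phi \in \mathsf{W}_{K, \vec{\xi}}$ (a holomorphic function of $\vec{z} \in \CC^g$) to $\Phi(\vec w) \cdot D_{K, \vec\nn}$, where $\vec w = (w_1, \dots, w_g)$ and $w_k = z_1^{(k)} + \dots + z_{n_k}^{(k)}$; in particular $\Phi_{\vec c} = \iota(H_{\vec c})$ for the basis $(H_{\vec c})_{\vec c \in \Pi}$ of Theorem \ref{T:HeisenbergGroupRepr}. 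By Remark \ref{R:Matching}, $T_1$ and $T_2$ act on $\mathsf{W}_{K, \vec\nn, \vec\xi} \supset \mathsf{V}_{K, \vec\nn, \vec\xi}$ factorwise by the formulae (\ref{E:MagneticTranslationsFormulae}) with $k = d$, using the parameter $\xi_k$ on every variable of the $k$-th layer. Thus $T_1$ simply translates each $z_p^{(k)}$ by $1/d$, while $T_2$ translates each $z_p^{(k)}$ by $\tau/d$ and multiplies by $\exp\bigl((2\pi i \xi_k + \pi i \tau)/d\bigr)\exp(2\pi i z_p^{(k)})$.

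The left square is the easier one. Applying $T_1$ to $\iota(\Phi)$ translates every variable by $1/d$, hence sends $w_k \mapsto w_k + n_k/d$, i.e. $\vec w \mapsto \vec w + \vec u$, because $\vec u = K^{-1}\vec e = \vec\nn/d$ by Lemma \ref{L:WenDatumCyclic} together with $K\vec\nn = d\vec e$. Since $D_{K, \vec\nn}$ depends only on the differences $z_p^{(k)} - z_q^{(l)}$, it is invariant under a common translation of all variables. Therefore $T_1\bigl(\iota(\Phi)\bigr) = \Phi(\vec w + \vec u)\cdot D_{K, \vec\nn} = \iota\bigl(S_{\vec u}(\Phi)\bigr)$, which is the asserted commutativity.

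For the right square I would carry out the analogous computation for $T_2$. Translating every variable by $\tau/d$ sends $\vec w \mapsto \vec w + \tau \vec u$ and again leaves $D_{K, \vec\nn}$ unchanged, while the accumulated scalar prefactor is $\prod_{k=1}^g \exp\bigl(n_k(2\pi i\xi_k + \pi i\tau)/d\bigr)\cdot \exp\bigl(2\pi i \sum_{k} w_k\bigr)$. The crux is to identify this prefactor with $U_{\vec u}(\vec w, \vec\xi)^{-1} = \exp\bigl(\pi i(\vec u, 2\vec\xi + 2K\vec w + \tau K\vec u)\bigr)$. This is where the arithmetic of $\vec u$ enters: using that $K$ is symmetric and $K\vec u = \vec e$, one gets $(\vec u, K\vec w) = (\vec e, \vec w) = \sum_k w_k$, $(\vec u, \vec\xi) = \sum_k (n_k/d)\xi_k$, and $(\vec u, K\vec u) = (\vec e, \vec u) = \sum_k n_k/d = n/d$, so the two expressions coincide termwise. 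Hence $T_2\bigl(\iota(\Phi)\bigr) = U_{\vec u}(\vec w, \vec\xi)^{-1}\Phi(\vec w + \tau\vec u)\cdot D_{K, \vec\nn} = \iota\bigl(R_{\vec u}(\Phi)\bigr)$. The only genuine obstacle is this bookkeeping of exponential factors; everything else is formal.

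Finally, formula (\ref{E:MagneticBasis}) drops out by evaluating the two commuting squares on the basis element $H_{\vec c}$ and invoking the action (\ref{E:ActionThetaBasis}) of Theorem \ref{T:HeisenbergGroupRepr}: one obtains $T_1(\Phi_{\vec c}) = \iota\bigl(S_{\vec u}(H_{\vec c})\bigr) = \upsilon(\vec u, \vec c)\,\iota(H_{\vec c}) = \upsilon(\vec u, \vec c)\Phi_{\vec c}$ and $T_2(\Phi_{\vec c}) = \iota\bigl(R_{\vec u}(H_{\vec c})\bigr) = \iota(H_{\vec c + \vec u}) = \Phi_{\vec c + \vec u}$, as claimed.
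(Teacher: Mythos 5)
Your proof is correct and follows essentially the same route as the paper: compute the action induced by $T_1, T_2$ through $\iota$ (factorwise translation by $1/d$, respectively $\tau/d$, with $D_{K, \vec\nn}$ invariant under common translation of all variables) and compare with the formulae (\ref{E:MagneticMultivariate}) for $S_{\vec{u}}, R_{\vec{u}}$, then read off (\ref{E:MagneticBasis}) from (\ref{E:ActionThetaBasis}). Your explicit bookkeeping of the $T_2$ prefactor is in fact slightly more careful than the paper's own displayed formula, which carries a stray factor $n$ in the term $\exp\bigl(2\pi i n (\vec{e}, \vec{w})\bigr)$; the correct factor, matching $U_{\vec{u}}(\vec{w}, \vec\xi)^{-1}$, is $\exp\bigl(2\pi i (\vec{e}, \vec{w})\bigr)$ exactly as you derived.
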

\begin{proof} First note that $\iota$ is just an explicit form of the isomorphism (\ref{E:CentralWaveIdentify}). For  $\Phi \in \mathsf{W}_{K, \vec{\xi}}$, the induced  action of $T_1$ and $T_2$ is given by the following formulae:
\begin{equation*}
\left\{
\begin{array}{l}
\bigl(T_1(\Phi)\bigr)\bigl(\vec{w}\bigr) = \Phi\bigl(\vec{w} + \vec{u}\bigr) \\
\bigl(T_2(\Phi)\bigr)\bigl(\vec{w}\bigr) = 
\exp\Bigl( 2\pi i(\vec{u}, \vec{\xi}) + \pi i \tau \dfrac{n}{d}\Bigr)\exp\bigl(2\pi i n (\vec{e}, \vec{w})\bigr) \Phi\bigl(\vec{w} + \tau\vec{u}\bigr).
\end{array}
\right.
\end{equation*}
A comparison with the formulae (\ref{E:MagneticMultivariate}) shows that 
the both  diagrams (\ref{E:TwoMagnetic}) are commutative, as asserted. The formulae (\ref{E:MagneticBasis}) are consequences of (\ref{E:ActionThetaBasis}). 
\end{proof}

\begin{remark}
Lemma \ref{L:MagneticTranslations} implies that the action of $G_d$ (respectively, $\widetilde{G}_d$) on $\mathsf{W}_{K, \vec{\xi}}$ coincides with the action 
on $\mathsf{V}_{K, \vec\nn, \vec\xi}$ of the subgroup of the Heisenberg group $G_K$ generated by the elements
$\bigl(([\vec{u}], [\vec{0}]), 1\bigr)$ and $\bigl([\vec{0}], [\vec{u}]), 1\bigr)$. In the case when $K$ is primary, the group  $\Pi$ is cyclically generated by the element $[\vec{u}]$ (see Lemma \ref{L:WenDatumCyclic}) and this subgroup coincides with 
the whole group $G_K$.
\end{remark}

\smallskip
\noindent
Now we prove the following generalization of Theorem \ref{T:GramMatrixHRSpace}.

\begin{theorem}\label{T:GramMatrixKVWSpace} Let $\bigl(E, (K, \vec\nn)\bigr)$ be a KV-W datum with a primary matrix $K$. Consider the Gram matrix  $C = \bigl(\langle \Phi_i, \Phi_j\rangle\bigr)_{1 \le i, j \le \delta}$  of the basis $(\Phi_1, \dots, \Phi_\delta)$ of the hermitian vector space $\mathsf{V}_{K, \vec{\nn}, \vec{\xi}}$. Then we have: $C =  c I_\delta$ for some  $c = c(K, \vec{n}, \vec{\xi}) \in \CC$. 
\end{theorem}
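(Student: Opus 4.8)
The plan is to mirror the single-layer argument of Theorem \ref{T:GramMatrixHRSpace}, exploiting the unitarity of the magnetic translations $T_1$ and $T_2$. The crucial point to keep in mind is that the scalar product on $\mathsf{V}_{K, \vec\nn, \vec\xi}$ is the one \emph{restricted from the many-body Hilbert space} $\mathsf{W}_{K, \vec\nn, \vec\xi}$, and that the isomorphism $\iota \colon \mathsf{W}_{K, \vec\xi} \lar \mathsf{V}_{K, \vec\nn, \vec\xi}$ of Lemma \ref{L:Magnetic} need not be an isometry (indeed, $\iota$ compares the center-of-mass metric on $\mathsf{W}_{K,\vec\xi}$ with the many-body metric). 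Consequently the orthogonality relations (\ref{E:ThetaOrthogonal}), which hold for the center-of-mass scalar product on $\mathsf{W}_{K, \vec\xi}$, cannot be invoked directly; instead one must produce orthogonality intrinsically on the many-body side.

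By Remark \ref{R:Matching}, the operators $T_1$ and $T_2$ act unitarily on $\mathsf{W}_{K, \vec\nn, \vec\xi}$ and leave the subspace $\mathsf{V}_{K, \vec\nn, \vec\xi}$ invariant; hence they restrict to unitary operators on $\mathsf{V}_{K, \vec\nn, \vec\xi}$ endowed with the restricted scalar product. Their action on the distinguished basis is given by (\ref{E:MagneticBasis}): $T_1(\Phi_{\vec c}) = \upsilon(\vec u, \vec c)\,\Phi_{\vec c}$ and $T_2(\Phi_{\vec c}) = \Phi_{\vec c + \vec u}$, where $\vec u = K^{-1}\vec e$. Since $K$ is primary, Lemma \ref{L:WenDatumCyclic} guarantees that $[\vec u]$ generates the cyclic group $\Pi$ of order $\delta$, so I would relabel the basis as $\Phi_j := \Phi_{(j-1)\vec u}$ for $1 \le j \le \delta$. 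With this indexing $T_2$ becomes the cyclic shift $T_2(\Phi_j) = \Phi_{j+1}$ (indices read modulo $\delta$), while $T_1(\Phi_j) = \upsilon(\vec u, \vec u)^{\,j-1}\Phi_j$ is diagonal.

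The decisive computation is that of the eigenvalue $\upsilon(\vec u, \vec u)$. Using (\ref{E:FormUpsilon}), the symmetry of $K$, and $K^\sharp = \delta K^{-1}$, one finds $\vec u^{\,t} K \vec u = \vec e^{\,t} K^{-1} \vec e = \tfrac{1}{\delta}\sum_{i,j} K^\sharp_{ij} = \tfrac{\rho}{\delta}$ by (\ref{E:SumAdjunct}), so that $\upsilon(\vec u, \vec u) = \exp\!\bigl(2\pi i \tfrac{\rho}{\delta}\bigr) =: \zeta$. Here the primary hypothesis enters in an essential way: $\gcd(\delta, \rho) = 1$ forces $\zeta$ to be a \emph{primitive} $\delta$-th root of unity, whence the $\delta$ eigenvalues $\zeta^{0}, \zeta^{1}, \dots, \zeta^{\delta-1}$ of $T_1$ are pairwise distinct. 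From unitarity of $T_1$ we then get, for $i \ne j$, the identity $\langle \Phi_i, \Phi_j\rangle = \langle T_1\Phi_i, T_1\Phi_j\rangle = \zeta^{\,i-j}\langle \Phi_i, \Phi_j\rangle$, and $\zeta^{\,i-j} \ne 1$ forces $\langle \Phi_i, \Phi_j\rangle = 0$; while unitarity of $T_2$ gives $\langle \Phi_{j+1}, \Phi_{j+1}\rangle = \langle \Phi_j, \Phi_j\rangle$ for all $j$, so that all diagonal entries equal a common constant $c$. This yields $C = c\,I_\delta$.

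The main obstacle is conceptual rather than computational: one must resist the temptation to transport the orthogonality from the center-of-mass space and instead argue directly with the many-body metric through the unitarity of the magnetic translations. The primary condition is precisely what makes $T_1$ separate the basis vectors (equivalently, what makes $[\vec u]$ generate $\Pi$); without it the single operator $T_1$ no longer has simple spectrum on the basis and the argument breaks down, which is consistent with the reappearance of the same hypothesis in the stability statement of Theorem \ref{T:MagneticBundleCurve}.
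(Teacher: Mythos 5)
Your proof is correct and follows essentially the same route as the paper: index the basis by powers of the cyclic generator $[\vec{u}]$ of $\Pi$ (using primality via Lemma \ref{L:WenDatumCyclic}), observe via Lemma \ref{L:Magnetic} that $T_1$ acts diagonally with eigenvalues that are powers of $\upsilon(\vec{u},\vec{u}) = \exp\bigl(2\pi i \rho/\delta\bigr)$ (a primitive $\delta$-th root of unity since $\gcd(\rho,\delta)=1$) while $T_2$ acts as a cyclic shift, and conclude from the unitarity of both operators on the many-body Hilbert space. Your explicit warning that the isomorphism $\iota$ need not be an isometry, so orthogonality cannot simply be transported from the center-of-mass space, is exactly the point the paper's argument implicitly respects by working directly with the restricted many-body metric.
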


\begin{proof}  For any $1 \le i \le \delta$ we put $\Phi_i := \Phi_{i \vec{u}}$. Since $[\vec{u}]$ generates the group $\Pi$, $(\Phi_1, \dots, \Phi_\delta)$ is a basis of the vector space $\mathsf{V}_{K, \vec{\nn}, \vec{\xi}}$. The linear operators $T_1, T_2: \mathsf{W}_{K, \vec\nn,  \vec\xi} \lar \mathsf{W}_{K, \vec\nn, \vec\xi}$ are given in  by the  matrices:
\begin{equation*}
\bigl[T_1\bigr] = \left(
\begin{array}{cccc}
1 & 0 & \dots & 0 \\
0 & q & \dots & 0 \\
\vdots & \vdots& \ddots & \vdots \\
0 & 0 & \dots & q^{\delta-1}
\end{array}
\right) \quad \mbox{\rm and} \quad
\bigl[T_2\bigr] = \left(
\begin{array}{cccc}
0 & \dots & 0 & 1 \\
1 & \dots & 0 & 0 \\
\vdots & \ddots& \vdots & \vdots \\
0 & \dots & 1 & 0
\end{array}
\right),
\end{equation*}
where $q = \exp\left(2\pi i \dfrac{n}{d}\right)$. Since $\dfrac{n}{d} = \dfrac{\rho}{\delta}$  and $\gcd(\rho, \delta) = 1$, we conclude that $q$ is a primitive root of $1$ of order $\delta$. 
The stated result about the Gram matrix $C$ follows from the fact that $T_1$ and $T_2$ are unitary operators.
\end{proof}

\section{Vector bundles and Fourier--Mukai transforms on abelian varieties}\label{S:FMT}
Our goal is to extend the correspondence $\CC^q \ni  \vec{\xi} \mapsto \mathsf{V}_{K, \vec{\nn}, \vec{\xi}}$ to a holomorphic hermitian vector bundle on the complex torus $\CC^g/(\ZZ^g + \tau \ZZ^g)$ and then to compute its Chern classes. A formal construction of this bundle requires the machinery of vector bundles on abelian varieties and Fourier--Mukai transforms.  In this section we recall some foundational prerequisites on complex tori and vector bundles  on them. A more detailed exposition  can be found in \cite{BirkenhakeLange, Bost, MumfordAb, Polishchuk}.

\subsection{Dual complex torus}  Let $V$ be a finite dimensional complex vector space, $\Gamma \subset V$ a full lattice and $T = V/\Gamma$ the corresponding complex torus.  We put:
$$
V^\ddagger := \Hom_{\bar\CC}(V, \CC) = \left\{V \stackrel{l}\lar \CC \, \big| \, l \; \mbox{is} \, \CC\mbox{-antilinear} \right\}.
$$
We equip $V^\ddagger$ with the structure of a complex vector space by putting 
$(\lambda l)(v) := \lambda \cdot l(v)$ for any $l \in V^\ddagger, v \in V$ and 
$\lambda \in \CC$. It is not difficult to see that the canonical $\RR$-linear pairing 
\begin{equation}\label{E:PairingDagger}
\langle -, \, -\, \rangle: V^\ddagger \times V \lar \RR, \; (l, v) \mapsto \mathfrak{Im}\bigl(l(v)\bigr)
\end{equation}
is non-degenerate. The dual lattice  is defined as
\begin{equation*}
\Gamma^\ddagger : = \left\{l \in V^\ddagger \, \big| \, \langle l, \gamma \rangle \in \ZZ \; \mbox{for all} \; \gamma \in \Gamma \right\}.
\end{equation*}
The dual complex torus is by the definition $\widehat{T} := V^\ddagger/\Gamma^\ddagger$. 

\smallskip
\noindent
Let $T_1 = V_1/\Gamma_1$ and $T_2 = V_2/\Gamma_2$ be two complex tori and 
$T_1 \stackrel{\phi}\lar T_2$ be a holomorphic group homomorphism. Then there exists a unique $\CC$-linear map $V_1 \stackrel{f}\lar V_2$ such that $f(\Gamma_1) \subseteq \Gamma_2$ inducing $\phi$, see e.g. \cite[Proposition 1.2.1]{BirkenhakeLange}. Next, we have an induced $\CC$-linear map
$
V_2^\ddagger \stackrel{f^\ddagger}\lar V_1^\ddagger, \, l \mapsto l \circ f. 
$
It follows that $f^\ddagger(\Gamma_2^\ddagger) \subseteq \Gamma_1^\ddagger$ and, as a consequence, we get the dual morphism of complex tori
 $\widehat{T}_2 \stackrel{\widehat\phi}\lar \widehat{T}_1$.

\smallskip
\noindent
Next, note that we have an isomorphism of vector spaces 
$$
\CC \lar \CC^\ddagger = \Hom_{\bar{\CC}}(\CC, \CC), \; 1 \mapsto \sigma,
$$
where $\sigma(\lambda) = \bar{\lambda}$. Now, let $V = \CC^n$. For any $1 \le j \le n$ we define $\sigma_j \in V^\ddagger$ by setting
$
\sigma_j\left(
\begin{smallmatrix}
\lambda_1 \\
\vdots \\
\lambda_n
\end{smallmatrix}
\right) := \bar{\lambda}_j.
$
Then $(\sigma_1, \dots, \sigma_n)$ is a distinguished basis of $V^\ddagger$ and we get the corresponding  distinguished isomorphism of vector spaces $\CC^n \rightarrow V^\ddagger$. The proof of the following result is straightforward. 
\begin{lemma}\label{L:Dual}
Let $\Upsilon \in \Mat_{m \times n}(\CC)$ and $\CC^n \stackrel{\Upsilon}\lar \CC^m$ be the associated linear map. Then the matrix of the dual map $(\CC^m)^\ddagger \stackrel{\Upsilon^\ddagger}\lar (\CC^n)^\ddagger$ with respect to the above distinguished  bases is $\bar{\Upsilon}^t$. 
\end{lemma}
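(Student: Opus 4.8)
The plan is to compute the action of $\Upsilon^\ddagger$ directly on the distinguished basis of $(\CC^m)^\ddagger$ and to read off the matrix from the result. Write $\Upsilon = (\Upsilon_{ij})$, so that $(\Upsilon v)_i = \sum_{j=1}^n \Upsilon_{ij} v_j$ for $v = (v_1, \dots, v_n)^t \in \CC^n$. Denote by $(\sigma_1, \dots, \sigma_m)$ the distinguished basis of $(\CC^m)^\ddagger$ and by $(\sigma_1', \dots, \sigma_n')$ that of $(\CC^n)^\ddagger$, so that $\sigma_i(w) = \bar{w}_i$ for $w \in \CC^m$ and $\sigma_j'(v) = \bar{v}_j$ for $v \in \CC^n$.

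First I would fix an index $i$ and evaluate $\Upsilon^\ddagger(\sigma_i) = \sigma_i \circ \Upsilon$ on an arbitrary vector $v \in \CC^n$. Using the antilinearity of $\sigma_i$ this gives
\[
\bigl(\sigma_i \circ \Upsilon\bigr)(v) = \overline{(\Upsilon v)_i} = \overline{\sum_{j=1}^n \Upsilon_{ij} v_j} = \sum_{j=1}^n \overline{\Upsilon_{ij}}\,\bar{v}_j.
\]

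The key observation is then to recognize the right-hand side as a linear combination of the $\sigma_j'$ with respect to the complex structure on $(\CC^n)^\ddagger$ defined earlier. Since $\bar{v}_j = \sigma_j'(v)$ and since that complex structure is given by $(\lambda l)(v) = \lambda \cdot l(v)$, the scalars $\overline{\Upsilon_{ij}}$ are genuine $\CC$-linear coefficients, and we obtain
\[
\Upsilon^\ddagger(\sigma_i) = \sum_{j=1}^n \overline{\Upsilon_{ij}}\, \sigma_j'.
\]
Hence the coefficient of $\sigma_j'$ in $\Upsilon^\ddagger(\sigma_i)$, that is, the $(j,i)$-entry of the matrix $M$ of $\Upsilon^\ddagger$, equals $\overline{\Upsilon_{ij}} = (\bar{\Upsilon}^t)_{ji}$. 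Therefore $M = \bar{\Upsilon}^t$, as claimed.

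The computation is entirely routine; the only point requiring care is the single conjugation, which arises solely from the antilinearity built into the definition of $V^\ddagger$, together with the verification that this conjugation is not undone by the complex structure declared on $V^\ddagger$. Once that bookkeeping is settled the statement follows immediately, so I expect no substantial obstacle here.
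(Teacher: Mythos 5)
Your computation is correct: evaluating $\Upsilon^\ddagger(\sigma_i) = \sigma_i \circ \Upsilon$ on a vector, using antilinearity to pull out the conjugated entries $\overline{\Upsilon_{ij}}$, and checking that these are honest $\CC$-linear coefficients for the declared complex structure on $(\CC^n)^\ddagger$ is precisely the routine verification the paper has in mind when it states the lemma with the remark that its proof is straightforward (no proof is given there). Nothing is missing; in particular the one delicate point, that the conjugation coming from antilinearity is not cancelled by the scalar action $(\lambda l)(v) = \lambda\, l(v)$, is exactly the bookkeeping you address.
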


\smallskip
\noindent
After these preparations we can state and prove the following result.
\begin{proposition}\label{P:DualIsogeny} Let $\tau = s + i t \in \CC$ with $t \ne 0$, $K  \in \Mat_{g \times g}(\ZZ)$ be symmetric and positive definite, $\Omega = \tau K$, $A = \CC^g/(\ZZ^g + \tau \ZZ^g)$, $B = \CC^g/(\ZZ^g + \Omega \ZZ^g)$ and 
\begin{equation}\label{E:MainIsogeny}
A \stackrel{\kappa}\lar B, \; \left[\vec{z}\right] \mapsto \left[K \vec{z}\right].
\end{equation}
Then we have: $\widehat{A} = \CC^g/\frac{1}{t}(\ZZ^g + \tau \ZZ^g)$, 
$\widehat{B} = \CC^g/\frac{1}{t}(K^{-1}\ZZ^g + \tau \ZZ^g)$ and 
\begin{equation}
\widehat{B} \stackrel{\widehat\kappa}\lar \widehat{A}, \; \left[\vec{z}\right] \mapsto  \left[K\vec{z}\right].
\end{equation}
\end{proposition}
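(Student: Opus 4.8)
The plan is to carry out the entire computation in the distinguished antilinear coordinates on $(\CC^g)^\ddagger$ just introduced, identifying $\vec{w} \in \CC^g$ with the functional $l_{\vec{w}}(\vec{z}) = \sum_{j} w_j \bar{z}_j$. In these coordinates the defining condition $l \in \Gamma^\ddagger$, namely $\langle l, \gamma\rangle = \mathfrak{Im}\bigl(l(\gamma)\bigr) \in \ZZ$ for all $\gamma \in \Gamma$, need only be tested on a $\ZZ$-basis of the lattice. For both $A$ and $B$ the lattice has the shape $\ZZ^g + \Lambda \ZZ^g$, with $\Lambda = \tau I_g$ for $A$ and $\Lambda = \Omega = \tau K$ for $B$; the generators are the columns of $I_g$ and of $\Lambda$, and since $l_{\vec{w}}(\Lambda \vec{e}_k) = \bigl(\overline{\Lambda}^{\,t} \vec{w}\bigr)_k$, membership in $\Gamma^\ddagger$ reduces to the two congruences $\mathfrak{Im}(\vec{w}) \in \ZZ^g$ and $\mathfrak{Im}\bigl(\overline{\Lambda}^{\,t}\vec{w}\bigr) \in \ZZ^g$.

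First I would treat $A$, where $\Lambda = \tau I_g$ and $\overline{\Lambda}^{\,t} = \bar\tau I_g$. Writing $w_k = p_k + i q_k$ and $\tau = s + i t$, the first condition gives $q_k \in \ZZ$ and the second gives $\mathfrak{Im}(\bar\tau w_k) = s q_k - t p_k \in \ZZ$. Solving for $p_k$ yields $w_k = \frac{1}{t}(\tau q_k - n_k)$ with $q_k, n_k \in \ZZ$, whence $\Gamma^\ddagger_A = \frac{1}{t}(\ZZ^g + \tau \ZZ^g)$ and $\widehat{A} = \CC^g/\frac{1}{t}(\ZZ^g + \tau \ZZ^g)$.

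Next I would repeat this for $B$, where $\Lambda = \Omega = \tau K$ and, because $K$ is real and symmetric, $\overline{\Lambda}^{\,t} = \bar\tau K$. The first condition again gives $\vec{q} := \mathfrak{Im}(\vec{w}) \in \ZZ^g$, while the second, with $\vec{w} = \vec{p} + i\vec{q}$, becomes $\mathfrak{Im}(\bar\tau K \vec{w}) = K(s\vec{q} - t\vec{p}) \in \ZZ^g$, i.e.\ $s\vec{q} - t\vec{p} \in K^{-1}\ZZ^g$. Solving for $\vec{p}$ gives $\vec{w} = \frac{1}{t}(\tau \vec{q} - K^{-1}\vec{n})$ with $\vec{q}, \vec{n} \in \ZZ^g$, so that $\Gamma^\ddagger_B = \frac{1}{t}(K^{-1}\ZZ^g + \tau \ZZ^g)$ and $\widehat{B} = \CC^g/\frac{1}{t}(K^{-1}\ZZ^g + \tau \ZZ^g)$, as claimed.

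Finally, for the dual isogeny I would invoke Lemma \ref{L:Dual}: the homomorphism $\kappa$ is induced by the $\CC$-linear map $\vec{z} \mapsto K\vec{z}$, so its dual $\widehat{\kappa}\colon \widehat{B} \lar \widehat{A}$ is induced in the distinguished coordinates by the matrix $\bar{K}^{\,t} = K$ (using once more that $K$ is real and symmetric), that is, $[\vec{z}] \mapsto [K\vec{z}]$. As a consistency check one verifies $K \cdot \frac{1}{t}(K^{-1}\ZZ^g + \tau \ZZ^g) = \frac{1}{t}(\ZZ^g + \tau K \ZZ^g) \subseteq \frac{1}{t}(\ZZ^g + \tau \ZZ^g)$, confirming $\widehat{\kappa}(\Gamma^\ddagger_B) \subseteq \Gamma^\ddagger_A$. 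The only genuine bookkeeping is the computation for $B$, where the interaction between $\tau$ and the symmetric matrix $K$ must be tracked carefully through the real and imaginary parts; everything else follows formally from the distinguished identification of $(\CC^g)^\ddagger$ with $\CC^g$ and from Lemma \ref{L:Dual}.
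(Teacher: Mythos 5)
Your proof is correct and takes essentially the same route as the paper: both compute $\Gamma^\ddagger$ by testing the pairing $\mathfrak{Im}\bigl(l(\cdot)\bigr)$ on the lattice generators $\vec{e}_k$ and $\Lambda\vec{e}_k$ in the distinguished antilinear coordinates on $(\CC^g)^\ddagger$, solve the resulting two integrality conditions, and then obtain $\widehat\kappa$ from Lemma \ref{L:Dual} via $\bar{K}^{\,t} = K$. The only difference is notational --- you work with complex vectors $\vec{w} = \vec{p} + i\vec{q}$ and the condition $\mathfrak{Im}\bigl(\overline{\Lambda}^{\,t}\vec{w}\bigr) \in \ZZ^g$, whereas the paper writes the identical computation in real coordinates $(\vec{\alpha}, \vec{\beta})$.
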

\begin{proof}
We compute the dual lattice $\Gamma^\ddagger$ of $\Gamma := \ZZ^g + \Omega \ZZ^g$. First note that $(\sigma_1, i \sigma_1, \dots, \sigma_g, i \sigma_g)$ is a basis of $V^\ddagger$ viewed as an $\RR$-vector space. Let 
$\alpha_1, \dots, \alpha_g, \beta_1, \dots, \beta_g \in \RR$, $$l = \alpha_1 \sigma_1 + \beta_1 i \sigma_1 + \dots + \alpha_g \sigma_g + \beta_g i \sigma_g$$ and 
$\vec{m} = \left(
\begin{smallmatrix}
m_1 \\
\vdots \\
m_g
\end{smallmatrix}
\right) \in \ZZ^g$. Then  we have:
$
\bigl\langle l, \vec{m}\bigr\rangle = \beta_1 m_1 + \dots + \beta_g m_g,
$
where $\langle -, \, -\, \rangle$ is the pairing given by (\ref{E:PairingDagger}). Hence, $l \in \Gamma^\ddagger$ implies that $\beta_1, \dots, \beta_g \in \ZZ$. 

Let $(\vec{e}_1, \dots, \vec{e}_g)$ be the canonical basis of $\ZZ^g$. We have to check the constraint
$$\langle l, (s+it) K\vec{e}_j\rangle \in \ZZ \; \mbox{\rm for all} \; 1 \le j \le g.$$
It is equivalent to the statement that
\begin{equation}\label{E:identity1}
- (\alpha_1 k_{1j} + \dots + \alpha_g k_{gj}) t + (\beta_1 k_{1j} + \dots + \beta_g k_{gj}) \in \ZZ \;\,  \mbox{\rm for all} \;\,  1 \le j \le g,
\end{equation}
where $k_{ij}$ are the entries of the matrix $K$. Consider the $(1 \times g)$ matrix $L := (-t \alpha_1 + s \beta_1, \dots, -t\alpha_g + s\beta_g)$. The constraint (\ref{E:identity1}) is equivalent to $L K \in \ZZ^g$. Since $K = K^t$, we get an equivalent condition
$$
K \cdot (s \vec{\beta} - t \vec{\alpha}) \in \ZZ^g, \; \mbox{\rm where} \; 
\vec{\alpha} = 
\left(
\begin{array}{c}
\alpha_1 \\
\vdots \\
\alpha_g
\end{array}
\right) \, \mbox{\rm and} \;
\vec{\beta} = 
\left(
\begin{array}{c}
\beta_1 \\
\vdots \\
\beta_g
\end{array}
\right).
$$
Let $\vec{u}:= K \cdot (t \vec{\alpha} - s \vec{\beta})$. Then 
$\vec{\alpha} = \dfrac{s \vec\beta + K^{-1} \vec{u}}{t}$. Summing up, we get:
\begin{equation*}
\Gamma^\ddagger = \left\{\left.
\dfrac{s \vec\beta + K^{-1} \vec{u}}{t} + i \beta \, \right| \, \vec{\beta}, \vec{u} \in \ZZ^g
\right\} = \left\{\left.
\dfrac{K^{-1} \vec{u}}{t} + i \dfrac{\tau}{t}\beta \, \right| \, \vec{\beta}, \vec{u} \in \ZZ^g
\right\} = \dfrac{1}{t}\left(K^{-1} \ZZ^g + \tau \ZZ^g\right). 
\end{equation*}
This provides  an explicit  description of $\widehat{B}$ as well as of $\widehat{A}$ (we may just put $\Omega = \tau I_g)$. The remaining part of the proof follows from Lemma \ref{L:Dual}.
\end{proof}

\subsection{Line bundles on complex tori and Fourier--Mukai transforms} Let $U$ be a finite dimensional complex vector space. Recall that an $\RR$-bilinear form
$U \times U \stackrel{H}\lar \CC$ is hermitian, if it is $\CC$-linear in the first argument and satisfies $H(u_1, u_2) = \overline{H(u_2, u_1)}$ for all $u_1, u_2 \in U$ (no non-degeneracy condition is imposed). For such a form $H$  we denote by $E = \mathfrak{Im}(H): U \times U \rightarrow \RR$ its imaginary part.  Note that the bilinear form $E$ is skew-symmetric. 

In what follows, we denote $S = \left\{z \in \CC \, \big| \, |z| = 1 \right\}$. 
As in the previous subsection, let $V$ be a finite dimensional complex vector space, $\Gamma \subset V$ a full lattice and $T = V/\Gamma$ the corresponding complex torus. Recall (see e.g. \cite[Section 2.2]{BirkenhakeLange}) that a pair $(H, \alpha)$ is called an \emph{Appell--Humbert datum} if the following conditions are fulfilled: 
\begin{itemize}
\item $V \times V \stackrel{H}\lar \CC$ is a hermitian form such that $E(\Gamma \times \Gamma) \subseteq \ZZ$. 
\item $\Gamma \stackrel{\alpha}\lar S$ satisfies $\alpha(\gamma_1 + \gamma_2) = (-1)^{E(\gamma_1, \gamma_2)} \alpha(\gamma_1) \alpha(\gamma_2)$ for all $\gamma_1, \gamma_2 \in \Gamma$. 
\end{itemize}
The set of all Appell--Humbert data forms an abelian group with respect to the operation $(H_1, \alpha_2) + (H_2, \alpha_2) = (H_1 + H_2, \alpha_1 \alpha_2)$, which is isomorphic to the Picard group $\Pic^0(T)$ of $T$, see \cite[Theorem 2.2.3]{BirkenhakeLange}. We denote by $\kL(H, \alpha)$ the line bundle attached to an Appell--Humbert datum $(H, \alpha)$. 

We have a group homomorphism $V^\ddagger \lar \Hom(\Gamma, S), l \mapsto \exp\bigl(2\pi i \langle l, \, -\,\rangle \bigr)$, whose kernel is the lattice $\Gamma^\ddagger$. By \cite[Proposition 2.4.1]{BirkenhakeLange} we have induced isomorphisms
\begin{equation}\label{E:IdentificationDualTorus}
\widehat{T} = V^\ddagger/\Gamma^\ddagger \stackrel{\cong}\lar \Hom(\Gamma, S) \stackrel{\cong}\lar \Pic^0(T),
\end{equation}
where the second  map assigns to $\alpha \in \Hom(\Gamma, S)$ the line bundle $\kL(0, \alpha)$. In these terms, points of $\widehat{T}$ are got identified with topologically trivial line bundles on $T$. 

Consider now the following \emph{universal} Appell--Humbert datum $(H_c, \alpha_c)$ 
$$
(V^\ddagger \times V) \times (V^\ddagger \times V) \stackrel{H_c}\lar \CC, \; 
\bigl((l_1, v_1), (l_2, v_2)\bigr) \mapsto l_1(v_2) + \overline{l_2(v_1)}
$$
and 
$
\Gamma^\ddagger \times \Gamma \stackrel{\alpha_c}\lar \ZZ, (l, v) \mapsto \exp\bigl(\pi i l(v)\bigr),
$
which defines the so-called \emph{Poincar\'e line bundle} $\kP = \kL(H_c, \alpha_c)$ on the complex torus $\widehat{T} \times T = (V^\ddagger \times V)/(\Gamma^\ddagger \times \Gamma)$. This line bundle is uniquely characterized by the following two properties (see e.g.~\cite[Theorem 2.5.1]{BirkenhakeLange})
\begin{itemize}
\item $\kP\Big|_{{[\xi]} \times T} \cong \kL\bigl(0, \exp\bigl(2\pi i \langle\xi, \, -\,\rangle \bigr)\bigr)$ for any $\xi \in V^\ddagger$ (see also the  isomorphism (\ref{E:IdentificationDualTorus})).
\item $\kP\Big|_{\widehat{T} \times \{0\}} \cong \kO_{\widehat{T}}$.
\end{itemize}
In other words, $\kP$ is a normalized universal family of topologically trivial line bundles on the complex torus $T$.

From now on we assume that our complex tori $T$ are  abelian varieties, see \cite[Chapter IV]{BirkenhakeLange} for their definitions and characterizations. Let $T \stackrel{\pi}\longleftarrow \widehat{T} \times T \stackrel{\widehat\pi}\lar \widehat{T}$ be the canonical projections. The following classical  result is due to Mukai \cite[Theorem 2.2]{Mukai}. 
\begin{theorem}
The functor
\begin{equation}
D^b\bigl(\Coh(T)\bigr) \stackrel{\FF_T}\lar D^b\bigl(\Coh(\widehat{T})\bigr), \; 
\kF^\bu \mapsto R\widehat{\pi}_\ast \bigl(\kP \otimes \pi^\ast(\kF^\bu)\bigr)
\end{equation}
is an equivalence of triangulated categories. 
\end{theorem}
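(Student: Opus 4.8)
The plan is to follow Mukai's original strategy \cite{Mukai}: view $\FF_T$ as an integral functor with kernel the Poincar\'e bundle $\kP$, compute the double composition $\FF_{\widehat T}\circ \FF_T$ explicitly, recognise it as an autoequivalence of $D^b\bigl(\Coh(T)\bigr)$, and then conclude by a formal argument.

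First I would record the cohomological input that powers everything. For $\xi \in \widehat T$ write $\kP_\xi := \kP\big|_{\{\xi\}\times T}$ for the corresponding topologically trivial line bundle $\kL(0, \alpha_\xi)$ on $T$. The key vanishing is that $H^i(T, \kP_\xi) = 0$ for \emph{all} $i$ whenever $\xi \neq 0$, while $\dim_\CC H^i(T, \kO_T) = \binom{g}{i}$, where $g = \dim_\CC T$. This is the only point where the abelian-variety structure is genuinely used: it follows from the theorem of the cube together with the fact that a nontrivial $\kP_\xi$ (and each of its translates) has no nonzero global section. Combined with the cohomology-and-base-change theorem, this pins down the transform of the structure sheaf: $\FF_T(\kO_T) \cong \kO_{\widehat 0}[-g]$, the skyscraper at the origin $\widehat 0 \in \widehat T$ placed in cohomological degree $g$.

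Next I would invoke the general principle that a composition of integral functors is again integral, its kernel being the convolution of the two kernels. Applied to $\FF_{\widehat T}\circ \FF_T$ --- after identifying $\widehat{\widehat T}$ with $T$ canonically --- this yields an integral functor on $D^b\bigl(\Coh(T)\bigr)$ with kernel
\[
\kQ \cong Rp_{13\ast}\bigl(p_{12}^\ast \kP \otimes p_{23}^\ast \kP\bigr)
\]
on $T \times T$, the projections being taken from $T \times \widehat T \times T$. The heart of the matter is to evaluate $\kQ$: restricting over the fibres of $T \times T$ and applying the seesaw principle together with the vanishing above, one finds that $\kQ$ is supported along the antidiagonal and is isomorphic to $(\id_T \times (-1_T))_\ast \kO_T[-g]$. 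Hence $\FF_{\widehat T}\circ \FF_T \cong (-1_T)^\ast[-g]$, and by the symmetric computation $\FF_T \circ \FF_{\widehat T} \cong (-1_{\widehat T})^\ast[-g]$.

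Since $(-1_T)^\ast[-g]$ and $(-1_{\widehat T})^\ast[-g]$ are autoequivalences, the proof concludes formally: $\FF_T$ then possesses both a left inverse and a right inverse (each obtained by composing the other transform with the inverse of the relevant autoequivalence), and these two one-sided inverses necessarily coincide, providing a two-sided quasi-inverse. Therefore $\FF_T$ is an equivalence of triangulated categories. The main obstacle is the convolution computation of $\kQ$: one must carry the kernel through the base-change formula and identify both its support and its sheaf structure on the antidiagonal, and it is precisely here that the vanishing $H^\ast(T, \kP_\xi) = 0$ for $\xi \neq 0$ does the decisive work.
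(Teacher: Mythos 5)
Your proposal is correct and coincides with the approach the paper itself relies on: the paper states this result without proof, citing it as Mukai's Theorem 2.2, and your argument is precisely Mukai's original one. The chain you describe --- the vanishing of $H^\bullet(T, \kP_\xi)$ for $\xi \neq 0$ via the theorem of the cube, the computation $\FF_T(\kO_T)\cong \kO_{\widehat 0}[-g]$, the convolution-kernel identification giving $\FF_{\widehat T}\circ \FF_T \cong (-1_T)^\ast[-g]$ supported on the antidiagonal, and the formal two-sided-inverse conclusion --- is exactly the proof in the cited reference.
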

The functor $\FF_T$ (called \emph{Fourier--Mukai transform}) has a number of remarkable properties, see \cite{Mukai} as well as \cite{BirkenhakeLange, Polishchuk}. Of special interest for us is the case $A = \CC^g/(\ZZ^g + \tau \ZZ^g)$ and $B = \CC^g/(\ZZ^g + \Omega \ZZ^g)$. Recall the following result of Mukai \cite{Mukai}.
\begin{proposition}\label{P:FMTdualIsogeny}
Consider the isogeny $A \stackrel{\kappa}\lar B, [\vec{z}] \mapsto [K \vec{z}]$. Then the following diagram of triangulated categories and functors is commutative:
\begin{equation}
\begin{array}{c}
\xymatrix{
D^b\bigl(\Coh(B)\bigr) \ar[rr]^-{\FF_B} \ar[d]_-{\kappa^\ast} &&  D^b\bigl(\Coh(\widehat{B})\bigr) \ar[d]^{\widehat{\kappa}_\ast}\\
D^b\bigl(\Coh(A)\bigr)  \ar[rr]^-{\FF_A} &&  D^b\bigl(\Coh(\widehat{A})\bigr), \\
}
\end{array}
\end{equation}
where $\widehat{B} \stackrel{\widehat\kappa}\lar \widehat{A}$ is the dual isogeny (explicitly described in Proposition \ref{P:DualIsogeny}). 
\end{proposition}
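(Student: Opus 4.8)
The plan is to realise both composites as Fourier--Mukai (integral) transforms $D^b\bigl(\Coh(B)\bigr)\to D^b\bigl(\Coh(\widehat A)\bigr)$ and to compare their kernels. Write $\kP_A$ and $\kP_B$ for the Poincaré bundles on $\widehat A\times A$ and $\widehat B\times B$, and set $\phi:=\id_{\widehat A}\times\kappa\colon\widehat A\times A\to\widehat A\times B$ and $\psi:=\widehat\kappa\times\id_B\colon\widehat B\times B\to\widehat A\times B$. First I would rewrite $\FF_A\circ\kappa^\ast$: since $\kappa$ is finite flat, $\kappa^\ast$ is exact, and combining the projection formula for $\phi$ with the fact that the structural projection to $\widehat A$ factors through $\phi$ while the projection to $B$ composed with $\kappa$ also factors through $\phi$ gives, for every $\kF$, a natural isomorphism exhibiting $\FF_A(\kappa^\ast\kF)$ as the integral transform on $\widehat A\times B$ with kernel $R\phi_\ast\kP_A$. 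A symmetric computation, now factoring the relevant projections through $\psi$ and using $\widehat\kappa\circ\mathrm{pr}_{\widehat B}=\mathrm{pr}_{\widehat A}\circ\psi$, identifies $\widehat\kappa_\ast\circ\FF_B$ with the integral transform on $\widehat A\times B$ of kernel $R\psi_\ast\kP_B$. The assertion thus reduces to the kernel identity
\[
R\phi_\ast\kP_A\;\cong\;R\psi_\ast\kP_B \qquad\text{on }\widehat A\times B.
\]

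The geometric heart is a compatibility of the two Poincaré bundles. On $\widehat B\times A$ I introduce $\alpha:=\widehat\kappa\times\id_A$ and $\beta:=\id_{\widehat B}\times\kappa$ and claim $\alpha^\ast\kP_A\cong\beta^\ast\kP_B$. Restricting to a slice $\{\ell\}\times A$, the left-hand side is the degree-$0$ bundle on $A$ classified by $\widehat\kappa(\ell)\in\widehat A$, while the right-hand side is $\kappa^\ast$ of the bundle on $B$ classified by $\ell\in\widehat B$; under the identification \eqref{E:IdentificationDualTorus} of $\widehat T$ with $\Pic^0(T)$ the dual isogeny $\widehat\kappa$ is exactly $\kappa^\ast\colon\Pic^0(B)\to\Pic^0(A)$ (this is the defining property underlying Proposition \ref{P:DualIsogeny}), so the two slices agree, and both bundles are trivial along $\widehat B\times\{0\}$. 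By the seesaw principle, equivalently by the characterisation of the Poincaré bundle through its two defining restrictions, this forces $\alpha^\ast\kP_A\cong\beta^\ast\kP_B$.

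Finally I would feed this into the cartesian square with vertices $\widehat B\times A$, $\widehat A\times A$, $\widehat B\times B$, $\widehat A\times B$ and edges $\alpha,\phi,\beta,\psi$ (it is cartesian because $\widehat A\times A\times_{\widehat A\times B}\widehat B\times B\cong\widehat B\times A$). Base change along this square (valid since $\phi$ is finite, hence proper) gives $\psi^\ast R\phi_\ast\kP_A\cong R\beta_\ast\alpha^\ast\kP_A\cong R\beta_\ast\beta^\ast\kP_B$, and the projection formula rewrites the last term as $\kP_B\otimes R\beta_\ast\kO$. Here lies the main obstacle: $R\beta_\ast\kO=\mathrm{pr}_B^\ast(\kappa_\ast\kO_A)$ is a rank-$\delta$ sheaf, so base change alone only controls the pullback $\psi^\ast R\phi_\ast\kP_A$, not $R\phi_\ast\kP_A$ itself. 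The point is that the same computation applied to $R\psi_\ast\kP_B$ produces the identical factor, because $\kappa_\ast\kO_A\cong\bigoplus_{\eta\in\ker\widehat\kappa}\kL_\eta$ decomposes into precisely the degree-$0$ line bundles annihilated by $\kappa^\ast$; one then obtains $\psi^\ast R\phi_\ast\kP_A\cong\psi^\ast R\psi_\ast\kP_B$, and since $\psi=\widehat\kappa\times\id_B$ is finite, \'etale and surjective, faithfully flat descent upgrades this to the desired isomorphism. The delicate part I expect to spend the most care on is keeping every base-change and projection-formula identification \emph{canonical}, so that descent is applied to an honest morphism of sheaves rather than to a mere abstract isomorphism; this bookkeeping is exactly the content of Mukai's argument, to which the remaining routine verifications may be referred.
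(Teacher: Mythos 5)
The paper gives no proof of this proposition at all --- it is quoted from Mukai's paper \cite{Mukai} --- so your attempt has to be measured against the cited argument rather than against anything in the text. Your reduction of the statement to the kernel identity $R\phi_\ast\kP_A\cong R\psi_\ast\kP_B$ on $\widehat A\times B$ is correct, and so are the seesaw identification $\alpha^\ast\kP_A\cong\beta^\ast\kP_B$ and the two base-change/projection-formula computations. The genuine gap is the last step. An abstract isomorphism $\psi^\ast F\cong\psi^\ast G$ along a finite \'etale surjection does \emph{not} descend: on an elliptic curve the trivial bundle and a torsion line bundle $L$ become isomorphic after pullback along the isogeny that $L$ defines, yet they are not isomorphic. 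You correctly note that what is required is compatibility of your isomorphism with the canonical descent data, but writing this off as ``routine bookkeeping'' contained in ``Mukai's argument'' is exactly where the proposal breaks. The compatibility check is not routine: your isomorphism is assembled from base change, the projection formula, the seesaw isomorphism, and a chosen splitting $\kappa_\ast\kO_A\cong\bigoplus_{\eta\in\ker\widehat\kappa}\kL_\eta$, the last two of which are canonical only up to scalars; the deck group $\ker\widehat\kappa\cong\Pi_K$ permutes the $\delta$ summands, so making all scalar choices consistently is a $2$-cocycle problem with values in $\CC^\ast$, whose ambient obstruction group $H^2(\Pi_K,\CC^\ast)$ is nonzero whenever $\Pi_K$ is non-cyclic (e.g.\ $K=nI_g$). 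The cocycle does vanish here, but proving that is the mathematical heart of this route --- it is precisely the theta-group phenomenon that the paper's finite Heisenberg groups encode --- and it cannot be delegated to \cite{Mukai}, because Mukai's proof (and the standard accounts in \cite{BirkenhakeLange, Polishchuk}) never descends anything.

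What is proved directly in the literature is the \emph{companion} identity $\FF_B\circ\kappa_\ast\cong\widehat\kappa^\ast\circ\FF_A$: there the troublesome pushforward along the dual isogeny is replaced by the pullback $\widehat\kappa^\ast$, and the argument is a clean chain of flat base change, projection formula and the same compatibility $\alpha^\ast\kP_A\cong\beta^\ast\kP_B$, with no kernel comparison at all. The present proposition then follows formally: either take right adjoints on both sides (using $\kappa^{!}\cong\kappa^\ast$ for the \'etale morphism $\kappa$, and that the right adjoint of an equivalence is its quasi-inverse), or apply the companion identity to the isogeny $\widehat\kappa$ and invoke Mukai's inversion theorem $\FF_{\widehat T}\circ\FF_T\cong(-1)^\ast[-g]$ together with $\widehat{\widehat\kappa}=\kappa$. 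Alternatively, if you want to keep your kernel comparison, you can bypass descent entirely: pushing the common line bundle $\kP'=\alpha^\ast\kP_A\cong\beta^\ast\kP_B$ forward along $w=\phi\circ\alpha=\psi\circ\beta$ and applying the projection formula to both factorizations gives $(\phi_\ast\kP_A)^{\oplus\delta}\cong w_\ast\kP'\cong(\psi_\ast\kP_B)^{\oplus\delta}$; a short adjunction computation shows $\End(\phi_\ast\kP_A)\cong\CC\cong\End(\psi_\ast\kP_B)$, so both kernels are indecomposable, and the Krull--Schmidt property of coherent sheaves on projective varieties lets you cancel the exponents.
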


\smallskip
\noindent
Let $\kT = \kT_{\vec{0}}$ be the line bundle on $B$ introduced in Section \ref{S:WaveFunctions}. It is characterized by the property that the theta function $\Theta[\vec{0}, \vec{0}]\bigl(\vec{z} \, \big| \, \Omega\bigr)$ is its unique section up to the   multiplication with a scalar. The first Chern class of $\kT$ is given by the differential 2-form
\begin{equation}\label{E:ChernClassTheta}
\omega = \frac{i}{2t} \sum\limits_{p, q = 1}^g \lambda_{pq} dz_{p} \wedge d\bar{z}_q,
\end{equation}
where $(\lambda_{pq})_{p, q = 1}^g = K^{-1}$ and $(z_1, \dots, z_g)$ are the standard coordinates on $B$, see for instance \cite[page 196]{Bost} for a short proof. Using \cite[Theorem 1.3]{Polishchuk} (see also  \cite[Exercise 2.6(2)]{BirkenhakeLange}) we conclude that the matrix of the hermitian form
$\CC^g \times \CC^g \stackrel{H}\lar \CC$ from the Appell--Humbert datum defining the line bundle $\kT$ is  $\Omega^{-1} = \frac{1}{t} K^{-1}$. Next, $\kT$ is a principal polarization on $B$, see \cite[Section 4.1]{BirkenhakeLange}. As a consequence, we have an isomorphism of abelian varieties 
$$
\phi = \phi_\kT: B \lar \widehat{B}, \; b \mapsto t^*_b(\kT) \otimes \kT^{-1},
$$
where $B \stackrel{t_b}\lar B, x \mapsto x + b$; see \cite[Section 4.1]{BirkenhakeLange}.

\begin{proposition}\label{P:FMTproperties}
 The following results are true. 
 \begin{enumerate}
 \item In the notation of Proposition \ref{P:DualIsogeny}, the isomorphism  $B \stackrel{\phi}\lar \widehat{B}$ is given by the map $\CC^g \lar \left(\CC^g\right)^\ddagger, \vec{v} \mapsto H(\vec{v}, \,-\,)$, where the matrix of the hermitian form $H$ in the standard basis is $\Omega^{-1}$.
 \item Consider the auto-equivalence $\widetilde{\FF}_B$ given by the composition
 $$
 D^b\bigl(\Coh(B)\bigr) \stackrel{\FF_B}\lar D^b\bigl(\Coh(\widehat{B})\bigr) \stackrel{\phi^\ast}\lar D^b\bigl(\Coh(B)\bigr). 
 $$
 Then we have: $\widetilde{\FF}_B(\kT) \cong \kT^\vee$. 
 \end{enumerate}
\end{proposition}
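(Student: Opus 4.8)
For part (1) the plan is to read off $\phi = \phi_\kT$ directly from the Appell--Humbert description of $\kT$. For any line bundle $\kL = \kL(H, \alpha)$ on a complex torus $T = V/\Gamma$, the polarization morphism $\phi_\kL\colon T \lar \widehat{T}$, $x \mapsto t_x^\ast \kL \otimes \kL^{-1}$, is induced by the map $V \lar V^\ddagger$, $\vec{v} \mapsto H(\vec{v}, -)$ under the identification $\widehat{T} = V^\ddagger/\Gamma^\ddagger$ of (\ref{E:IdentificationDualTorus}); see \cite[Section 2.4]{BirkenhakeLange}. Since the hermitian form $H$ attached to $\kT$ has already been identified as the one whose matrix in the standard basis is $\Omega^{-1} = \frac{1}{t} K^{-1}$, the asserted formula follows at once. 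The fact that $\phi$ is an isomorphism rather than a mere isogeny reflects that $\kT$ is a principal polarization, i.e.\ $\chi(B, \kT) = 1$.

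For part (2) I would first observe that, as $\kT$ is ample with $\chi(B, \kT) = h^0(B, \kT) = 1$, the vanishing $H^p(B, \kT) = 0$ for $p \ge 1$ (already used above) shows that $\FF_B(\kT) = R\widehat{\pi}_\ast(\kP \otimes \pi^\ast \kT)$ is a \emph{line bundle} on $\widehat{B}$ concentrated in degree $0$. Hence $\widetilde{\FF}_B(\kT) = \phi^\ast \FF_B(\kT)$ is a line bundle on $B$ and it suffices to identify its class in $\Pic(B)$. The plan is then to compute $\phi^\ast$ of the Fourier--Mukai transform by flat base change (note $\phi$ is an isomorphism, hence flat) along the cartesian square
\[
\xymatrix{
B \times B \ar[rr]^-{\phi \times \id_B} \ar[d]_-{p_1} && \widehat{B} \times B \ar[d]^-{\widehat{\pi}} \\
B \ar[rr]^-{\phi} && \widehat{B},
}
\]
which yields $\phi^\ast \FF_B(\kT) \cong Rp_{1, \ast}\bigl((\phi \times \id_B)^\ast(\kP \otimes \pi^\ast \kT)\bigr)$, where $p_1, p_2\colon B \times B \lar B$ denote the projections.

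The computation would then proceed as follows. Since $\pi \circ (\phi \times \id_B) = p_2$ one has $(\phi \times \id_B)^\ast \pi^\ast \kT \cong p_2^\ast \kT$, while the defining property of $\kP$ together with the seesaw principle gives the Mumford bundle identity $(\phi \times \id_B)^\ast \kP \cong m^\ast \kT \otimes (\kT^{-1} \boxtimes \kT^{-1})$, with $m\colon B \times B \lar B$ the group law; see \cite{Mukai, BirkenhakeLange}. Multiplying, the factors $p_2^\ast \kT$ and $p_2^\ast \kT^{-1}$ cancel and the projection formula leaves $\phi^\ast \FF_B(\kT) \cong Rp_{1, \ast}(m^\ast \kT) \otimes \kT^{-1}$. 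Finally, the automorphism $\psi\colon B \times B \lar B \times B$, $(y, x) \mapsto (y, x + y)$, is an automorphism over $B$ via $p_1$ satisfying $\psi^\ast p_2^\ast \kT \cong m^\ast \kT$, so $Rp_{1, \ast}(m^\ast \kT) \cong Rp_{1, \ast}(p_2^\ast \kT) \cong H^0(B, \kT) \otimes \kO_B$ by base change and ampleness. Since $h^0(B, \kT) = 1$, this gives $\widetilde{\FF}_B(\kT) \cong \kT^{-1} = \kT^\vee$.

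The base-change and projection-formula manipulations are routine; the step I expect to require the most care is the Mumford bundle identity. There one must fix the normalization of $\kP$ on $\widehat{B} \times \{0\}$ and keep careful track of which copy of $B$ carries the dual torus, so that the seesaw argument produces precisely $m^\ast \kT \otimes (\kT^{-1} \boxtimes \kT^{-1})$ and the cancellations leave $\kT^{-1}$ rather than $\kT$.
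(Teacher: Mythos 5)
Your proposal is correct, but for part (2) it takes a genuinely different route from the paper. The paper's proof is purely by citation: part (1) is exactly \cite[Lemma 2.4.5]{BirkenhakeLange} (the general fact that $\phi_{\kL(H,\alpha)}$ is induced by $\vec{v} \mapsto H(\vec{v},-)$, combined with the already-established identification of the hermitian form of $\kT$ with $\Omega^{-1}$ -- this is the same argument you give), while part (2) is simply a reference to \cite[Theorem 3.13]{Mukai}, which states $\phi_L^\ast\,\FF(L) \cong L^{-1}$ for a principal polarization. What you do instead is reprove Mukai's theorem in the case at hand: flat base change along the cartesian square built from the isomorphism $\phi$, the identity $(\phi \times \mathsf{id}_B)^\ast \kP \cong m^\ast\kT \otimes \bigl(\kT^{-1} \boxtimes \kT^{-1}\bigr)$, the projection formula, and the shearing automorphism $(y,x) \mapsto (y, x+y)$ to reduce $Rp_{1,\ast}(m^\ast\kT)$ to $H^0(B,\kT)\otimes\kO_B \cong \kO_B$. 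I checked the step you flagged as delicate: with the paper's normalization $\kP\big|_{\widehat{B}\times\{0\}} \cong \kO_{\widehat{B}}$, the seesaw argument does produce the Mumford bundle on the nose (both sides restrict to $t_x^\ast\kT\otimes\kT^{-1}$ on $\{x\}\times B$ and to $\kO_B$ on $B \times \{0\}$, since $m\big|_{B\times\{0\}} = \mathsf{id}_B$), so the cancellation indeed leaves $\kT^{-1}$ and not $\kT$; the degree-$0$ concentration is also handled correctly via the Kodaira vanishing already used in the paper. The trade-off: the paper's citation is shorter and covers the general nondegenerate case, while your computation is self-contained and makes explicit exactly the normalization conventions for $\kP$ on which the sign of the answer depends -- a point that a bare citation leaves implicit.
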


\begin{proof}
For the  first statement, see \cite[Lemma 2.4.5]{BirkenhakeLange}, whereas for the second statement, see \cite[Theorem 3.13]{Mukai}. 
\end{proof}

\smallskip
\noindent
In what follows, we use the identifications
\begin{equation}\label{E:AandDualA}
\widehat{A} \stackrel{\psi}\lar A,  \; [\vec{z}] \mapsto t  [\vec{z}] \quad 
\end{equation}
and $
B \stackrel{\phi}\lar \widehat{B}$,
where $\widehat{A}$ and $\widehat{B}$ were explicitly described in Proposition \ref{P:DualIsogeny}.
\begin{theorem}\label{T:FMTCenterMass} Let $D^b\bigl(\Coh(A)\bigr) \stackrel{\FF}\lar D^b\bigl(\Coh(A)\bigr)$ be the composition $\psi_\ast \circ \FF_A$, $A \stackrel{\kappa}\lar B$ be the isogeny (\ref{E:MainIsogeny}) and $\kR := \kappa^\ast(\kT)$. Then the following results are true. 
\begin{enumerate}
\item In terms of the above identifications of abelian varieties and their duals, the ``dual isogeny'' $$\widetilde{\kappa}: \; B = \CC^g/(\ZZ^g + \Omega \ZZ^g) \stackrel{\phi}\lar \widehat{B}
\stackrel{\widehat{\kappa}}\lar  \widehat{A} \stackrel{\psi}\lar A =  \CC^g/(\ZZ^g + \tau \ZZ^g)$$  is given by $\widetilde{\kappa}([\vec{z}]) =  [\vec{z}]$. 
\item $\kC := \FF(\kR)$ is a simple semi-homogeneous vector bundle of rank $\delta = \det(K)$. 
\item We have: 
\begin{equation}\label{E:ChernClassMagnetic}
c_1(\kC) = -\frac{i}{2t} \sum\limits_{p, q = 1}^g K^\sharp_{pq} dw_{p} \wedge d\bar{w}_q,
\end{equation}
where $K^\sharp$ is the adjunct matrix of $K$ (i.e.  $K^\sharp K = \delta I_g$) and $(w_1, \dots, w_g)$ are the standard (local) coordinates on $A$. 
\end{enumerate}
\end{theorem}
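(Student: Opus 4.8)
The plan is to reduce the whole computation to a single pushforward along the isogeny $\widetilde\kappa$, and then read off statements (1)--(3) from that description.

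For (1) I would simply chase the explicit maps. By Proposition \ref{P:FMTproperties}(1), in the distinguished bases the polarization isomorphism is $\phi\colon B\lar\widehat B,\ [\vec z]\mapsto[\Omega^{-1}\vec z]=[\tfrac1t K^{-1}\vec z]$; composing with $\widehat\kappa([\vec z])=[K\vec z]$ from Proposition \ref{P:DualIsogeny} gives $[\tfrac1t\vec z]$, and then $\psi([\vec z])=t[\vec z]$ from (\ref{E:AandDualA}) yields $\widetilde\kappa([\vec z])=[\vec z]$. Since $\tau K\ZZ^g\subseteq\tau\ZZ^g$ with index $\delta$, the identity on $\CC^g$ indeed descends to the degree-$\delta$ covering $B\lar A$, which is exactly $\widetilde\kappa$.

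For (2) I would combine the two compatibility results. Proposition \ref{P:FMTdualIsogeny} gives $\FF_A(\kR)=\FF_A(\kappa^\ast\kT)\cong\widehat\kappa_\ast\FF_B(\kT)$, while Proposition \ref{P:FMTproperties}(2) gives $\phi^\ast\FF_B(\kT)\cong\kT^\vee$, so $\FF_B(\kT)\cong(\phi^{-1})^\ast\kT^\vee$ (a sheaf, since $\phi^\ast$ is exact). Hence
$$
\kC=\psi_\ast\FF_A(\kR)\cong(\psi\circ\widehat\kappa)_\ast(\phi^{-1})^\ast\kT^\vee=(\widetilde\kappa\circ\phi^{-1})_\ast(\phi^{-1})^\ast\kT^\vee\cong\widetilde\kappa_\ast\kT^\vee,
$$
using $\widetilde\kappa=\psi\widehat\kappa\phi$ and $(\phi^{-1})_\ast(\phi^{-1})^\ast=\id$. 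As $\widetilde\kappa$ is finite flat of degree $\delta$, the pushforward $\widetilde\kappa_\ast\kT^\vee$ is locally free of rank $\delta$ (and a genuine sheaf, since $\widetilde\kappa$ is affine). Simplicity is then immediate: $\FF=\psi_\ast\FF_A$ is an equivalence of $D^b\bigl(\Coh(A)\bigr)$, so $\End(\kC)\cong\Hom_{D^b}(\kR,\kR)=\End(\kR)=\CC$, the last equality because $\kR$ is a line bundle. Semi-homogeneity holds because $\kR$ is semi-homogeneous (every line bundle is) and Fourier--Mukai transforms preserve semi-homogeneity; equivalently, $\widetilde\kappa_\ast$ of a line bundle along an isogeny is semi-homogeneous by Mukai.

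For (3) I would apply Grothendieck--Riemann--Roch to $\widetilde\kappa_\ast\kT^\vee$. Since the Todd classes of the abelian varieties $A$ and $B$ are trivial, $\mathrm{ch}(\kC)=\widetilde\kappa_\ast\mathrm{ch}(\kT^\vee)$; taking degree-$2$ parts (preserved, as $\widetilde\kappa$ has relative dimension $0$) gives $c_1(\kC)=-\widetilde\kappa_\ast(\omega_B)$ with $\omega_B=c_1(\kT)=\frac{i}{2t}\sum_{p,q}(K^{-1})_{pq}\,dz_p\wedge d\bar z_q$ from (\ref{E:ChernClassTheta}). By (1) the map $\widetilde\kappa$ lifts to $\id_{\CC^g}$, so $\widetilde\kappa^\ast$ is the identity on translation-invariant forms, and the projection formula $\widetilde\kappa_\ast\widetilde\kappa^\ast=\delta\cdot\id$ yields $\widetilde\kappa_\ast(\omega_B)=\delta\,\omega_A$, where $\omega_A$ is the same constant-coefficient form written in the coordinates $(w_1,\dots,w_g)$. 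Since $\delta K^{-1}=K^\sharp$, this is precisely (\ref{E:ChernClassMagnetic}). The main obstacle is the bookkeeping in this last step: verifying carefully that the Gysin map acts as multiplication by $\delta$ on invariant $(1,1)$-forms under the coordinate identification $dz_p\mapsto dw_p$, and that GRR genuinely contributes no Todd correction.
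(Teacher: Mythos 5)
Your proposal is correct. Parts (1) and (2) are essentially the paper's own argument: the explicit chase $\phi([\vec z])=\bigl[\tfrac{1}{t}K^{-1}\vec z\bigr]$, $\widehat\kappa([\vec z])=[K\vec z]$, $\psi([\vec z])=t[\vec z]$ is exactly what the paper compresses into one sentence, and your identification $\kC\cong\widetilde\kappa_\ast(\kT^\vee)$ via Propositions \ref{P:FMTdualIsogeny} and \ref{P:FMTproperties}, with simplicity transported through the equivalence $\FF$ and semi-homogeneity obtained from Mukai's characterization of direct images of line bundles under isogenies, coincides with the text (the paper cites \cite[Theorem 5.8]{MukaiSemiHom} for the latter). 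Part (3) is where you take a genuinely different route. The paper pulls back: it forms the pull-back diagram of $\widetilde\kappa$ against itself, uses finite base change to obtain $\widetilde\kappa^\ast(\kC)\cong\bigoplus_{i=1}^{\delta}t_i^\ast(\kT^\vee)$, computes $\widetilde\kappa^\ast\bigl(c_1(\kC)\bigr)=-\delta\, c_1(\kT)$ upstairs (translations act trivially on $c_1$), and then descends through the identity lift of $\widetilde\kappa$. You push forward instead: GRR for the finite \'etale morphism $\widetilde\kappa$ (no Todd correction) gives $c_1(\kC)=-\widetilde\kappa_\ast\bigl(c_1(\kT)\bigr)$, and the projection formula $\widetilde\kappa_\ast\widetilde\kappa^\ast=\delta\cdot\mathsf{id}$, applied to the constant-coefficient representative transported through the identity lift, yields the stated form, since $\delta K^{-1}=K^\sharp$. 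Both computations pivot on part (1); the paper's version is marginally more elementary (only flat base change and translation-invariance of $c_1$ are needed), while yours avoids constructing the disjoint-union pull-back diagram at the cost of invoking the GRR and Gysin formalism; both are complete. The one point to phrase carefully is your blanket claim that Fourier--Mukai transforms preserve semi-homogeneity: in general this requires a WIT/IT-type hypothesis (satisfied here because $\kR$ is ample), so the ``equivalently'' clause you add --- the direct image of a line bundle under an isogeny is semi-homogeneous --- is the statement you should actually rely on, and it is precisely what the paper uses.
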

\begin{proof}  (1) The description of $\widetilde\kappa$ follows from the explicit descriptions of $\widehat{\kappa}$ and $\phi$ given in Proposition \ref{P:DualIsogeny} and Proposition \ref{P:FMTproperties}.

\smallskip
\noindent
(2) Using Proposition \ref{P:FMTdualIsogeny} and Proposition \ref{P:FMTproperties}, we get the following isomorphisms:
$$
\kC = \FF(\kR) = \FF\bigl(\kappa^\ast \kT\bigr) \cong \widetilde{\kappa}_\ast \widetilde{\FF}_B(\kT) \cong \widetilde{\kappa}_\ast(\kT^\vee).
$$
Since $\widetilde\kappa$ is a finite \'etale morphism of degree $\delta$, $\kC$ is a vector bundle of rank $\delta$. Since $\End_A(\kR) \cong \CC$ and $\FF$ is an auto-equivalence, we conclude that $\End_A(\kC) = \CC$, too. The fact that $\kC$ is semi-homogeneous (meaning that for any $a \in A$ there exists a line bundle $\kL_a \in \Pic(A)$ such that $t_a^\ast(\kC) \cong \kC \otimes \kL_a$) follows from \cite[Theorem 5.8]{MukaiSemiHom}.

\smallskip
\noindent
(3) Let $L  = \bigl\{b_1, \dots, b_\delta\bigr\}$ be the kernel of the isogeny
$B \stackrel{\widetilde\kappa}\lar A$ and for any $1 \le i \le \delta$ let
$B \stackrel{t_i}\lar B, b \mapsto b + b_i$ be the corresponding translation map. It is easy to see that  the following commutative diagram 
\begin{equation}\label{P:PullBack}
\begin{array}{c}
\xymatrix{
\protect\underbrace{B \sqcup \dots \sqcup B}_{\delta \; \mbox{\scriptsize{\sl times}}} \ar[rr]^-{(\mathsf{id}, \dots, \mathsf{id})} \ar[d]_-{t = (t_1, \dots, t_\delta)} & & B \ar[d]^-{\widetilde\kappa} \\
B \ar[rr]^-{\widetilde\kappa} & & A
}
\end{array}
\end{equation}
is a pull-back diagram. Since all morphisms in (\ref{P:PullBack}) are finite, we have base-change isomorphism
$
\widetilde{\kappa}^\ast\bigl(\kC\bigr) = \widetilde{\kappa}^\ast \widetilde{\kappa}_\ast  \kT^\vee \cong \bigoplus\limits_{i = 1}^\delta 
t_i^\ast\bigl(\kT^\vee\bigr).
$
Using the formula (\ref{E:ChernClassTheta}) for the first Chern class of $\kT$ we get:
$
\widetilde{\kappa}^\ast\bigl(c_1(\kC)\bigr) = c_1\bigl(\widetilde{\kappa}^\ast\bigl(\kC\bigr)\bigr) = -\delta c_1(\kT) = -\frac{i}{2t} \sum\limits_{p, q = 1}^g K^\sharp_{pq} dz_{p} \wedge d\bar{z}_q.
$
It remains to recall that by part (1), the isogeny $\widetilde\kappa$ is given by the identity map $\CC^g \rightarrow \CC^g$. Hence, $\widetilde\kappa$ identifies (local) standard coordinates $(w_1, \dots, w_g)$ and $(z_1, \dots, z_g)$ on the complex tori $A$ and $B$, implying the formula (\ref{E:ChernClassMagnetic}).
\end{proof}

\begin{remark} Since $\kC$ is semi-homogeneous, its total Chern class can be expressed via its first Chern class:
\begin{equation}
c(\kC) = \left(1 + \frac{c_1(\kC)}{\delta}\right)^\delta,
\end{equation}
see for instance \cite[Theorem 5.12]{Yang}. Moreover, $\kC$ can be equipped with a hermitian metric and a projectively flat hermitian connection; see Remark \ref{R:ProjFlat}. We shall see later on that these structures  can be made more concrete. 
\end{remark}

\subsection{Poincar\'e line bundle revisited} 
 Consider the meromorphic function $\CC^2 \stackrel{Z}\lar \CC$ given by the formula
$Z(\xi, z) = \dfrac{\vartheta(\xi) \vartheta(z)}{\vartheta(z-\xi)}$ (up to a normalization constant, it is the inverse of the so-called Kronecker elliptic function). It follows from (\ref{E:ThetaFunctionsTransfRules}) that
$$
Z(\xi+1, z) = Z(\xi, z) = Z(\xi, z+1), \quad Z(\xi+\tau, z) = \exp(-2\pi i z) Z(\xi, z) .
$$
and $
Z(\xi, z+\tau) = \exp(-2\pi i \xi) Z(\xi, z)$. As a consequence, $Z$ is a meromorphic section of the line bundle $\kQ = \bigl((\CC \times \CC) \times\CC \bigr)/\sim$ on $E \times E$, given by the automorphy factor specified by the rules
\begin{equation}
\left\{
\begin{array}{l}
\bigl((\xi +1, z), v\bigr) \sim  \bigl((\xi, z), v\bigr) \sim \bigl((\xi, z+1), v\bigr) \\
\bigl((\xi +\tau, z), v\bigr) \sim  \bigl((\xi, z), \exp(-2\pi i z)v\bigr) \\
\bigl((\xi, z +\tau), v\bigr) \sim  \bigl((\xi, z), \exp(-2\pi i \xi)v\bigr). \\
\end{array}
\right.
\end{equation}
Note that $\kQ \cong \kO_{E \times E}\bigl(\{0\}\times E + E \times \{0\} - \Xi\bigr)$, where $\Xi \subset E \times E$ is the diagonal. Next,
\begin{itemize}
\item for any $\xi = a\tau + b  = E$ we have: 
$\kQ\Big|_{\{\xi\} \times E} \cong \kO_E\bigl([0]-[\xi]\bigr) \cong \kL_{0, \xi}$, see also (\ref{E:LBTorus}).
\item $\kQ\Big|_{E \times \{0\}} \cong \kO_{E}$.
\end{itemize}
In this way, we  identify the complex torus $E = \CC/\langle 1, \tau\rangle$, the corresponding dual torus $\widehat{E}$ and the Jacobian variety $\Pic^0(E)$ and regard $\kQ$ as the normalized Poincar\'e line bundle on $\widehat{E} \times E = E \times E$. 

\smallskip
\noindent
Let $z = x + \tau y$ and $\xi = \beta + \alpha \tau$, where $x, y, \alpha, \beta\in \RR$. Similarly to (\ref{E:MetricLB1}),  consider the function $\CC^2 \stackrel{h}\lar \RR_{> 0}$ given by the formula
\begin{equation}\label{E:MetricLB2}
h(\xi, z)  := \exp(- 4 \pi a ty).
\end{equation}
Analogously  to Lemma \ref{L:MetricOnLB}, we have the following result. 
\begin{lemma} The above function $h$ defines a hermitian metric on the line bundle $\kQ$. Moreover, for  any $\xi \in \widehat{E}$, the induced metric on $\kQ\big|_{\{\xi\} \times E} \cong \kL_{0, \xi}$ coincides with the one introduced in Lemma \ref{L:MetricOnLB}.
\end{lemma}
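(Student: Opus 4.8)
The plan is to mimic the proof of Lemma~\ref{L:MetricOnLB}: a positive smooth function $h$ on the universal cover $\CC^2$ defines a hermitian metric on the line bundle $\kQ$ exactly when, for every smooth section $f$ of $\kQ$, the pointwise squared length $\eta_f := |f|^2\,h$ descends to a globally defined function on $E \times E$. Equivalently, writing $\upsilon_\lambda$ for the automorphy factor of $\kQ$ attached to a lattice generator $\lambda$ (so that sections satisfy $f(p + \lambda) = \upsilon_\lambda(p)\, f(p)$), one needs $h(p + \lambda) = |\upsilon_\lambda(p)|^{-2}\,h(p)$ for each of the four generators $(1,0)$, $(\tau, 0)$, $(0,1)$ and $(0,\tau)$ of the lattice $\Lambda \times \Lambda$ acting on the $(\xi, z)$-plane. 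I would take the transformation rules of $\kQ$ in the form already recorded for its meromorphic section $Z$, namely $f(\xi+1,z) = f(\xi,z) = f(\xi,z+1)$, together with $f(\xi+\tau,z) = \exp(-2\pi i z) f(\xi,z)$ and $f(\xi, z+\tau) = \exp(-2\pi i \xi) f(\xi, z)$. Positivity of $h$ is automatic, since it is a real exponential.

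First I would dispose of the two periodic directions. Writing $z = x + \tau y$ and $\xi = \beta + \alpha\tau$ with $x,y,\alpha,\beta \in \RR$, the function $h(\xi,z) = \exp(-4\pi\alpha t y)$ depends only on the $\tau$-coefficients $\alpha$ and $y$. The shift $\xi \mapsto \xi + 1$ fixes $\alpha$ and the shift $z \mapsto z + 1$ fixes $y$, while in both cases the corresponding automorphy factor is $1$; hence $\eta_f$ is unchanged, as required. The content of the computation is in the two remaining directions, and here the only point to watch is the correct bookkeeping of imaginary parts: one has $\mathfrak{Im}(z) = t y$ and $\mathfrak{Im}(\xi) = \alpha t$, so that $|\exp(-2\pi i z)|^2 = \exp(4\pi t y)$ and $|\exp(-2\pi i \xi)|^2 = \exp(4\pi \alpha t)$. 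Under $\xi \mapsto \xi + \tau$ one has $\alpha \mapsto \alpha + 1$, so $h$ is multiplied by $\exp(-4\pi t y)$, which is precisely the inverse of the factor $|\exp(-2\pi i z)|^2$ acquired by $|f|^2$; the two cancel and $\eta_f(\xi + \tau, z) = \eta_f(\xi, z)$. Symmetrically, under $z \mapsto z + \tau$ one has $y \mapsto y + 1$, so $h$ is multiplied by $\exp(-4\pi\alpha t)$, cancelling the factor $|\exp(-2\pi i \xi)|^2$ picked up by $|f|^2$, whence $\eta_f(\xi, z + \tau) = \eta_f(\xi, z)$. This shows $\eta_f$ descends to $E \times E$, and therefore $h$ defines a hermitian metric on $\kQ$.

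For the second assertion I would simply freeze $\xi$ and restrict. For fixed $\xi$, the function $z \mapsto h(\xi, z) = \exp(-4\pi\alpha t y)$ is exactly the function (\ref{E:MetricLB1}) in the degenerate case $k = 0$, i.e.\ the metric on $\kL_{0,\xi}$ produced by Lemma~\ref{L:MetricOnLB} (with $a = \alpha$). Since $\kQ\big|_{\{\xi\}\times E} \cong \kO_E\bigl([0]-[\xi]\bigr) \cong \kL_{0,\xi}$, as recorded just above, the induced metric on the fibre-restricted bundle is precisely the one of Lemma~\ref{L:MetricOnLB}, as claimed.

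I do not expect a genuine obstacle here: the statement is a direct transcription of Lemma~\ref{L:MetricOnLB} to the Poincar\'e bundle, and the whole argument reduces to the two one-line cancellations above. The only place where care is needed is the sign convention and the identification of the $\tau$-adapted imaginary parts $\mathfrak{Im}(z) = ty$ and $\mathfrak{Im}(\xi) = \alpha t$; getting these right is exactly what makes the moduli of the automorphy factors $\exp(-2\pi i z)$ and $\exp(-2\pi i \xi)$ cancel the quasi-periodic growth of $h$, so that $\eta_f$ is genuinely $\Lambda \times \Lambda$-invariant.
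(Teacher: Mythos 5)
Your proof is correct and follows exactly the paper's approach: the paper likewise verifies that $\eta_f = |f|^2\,h$ descends to $\widehat{E}\times E$ (it calls this a ``straightforward computation'', which you have carried out explicitly and accurately, including the sign bookkeeping $\mathfrak{Im}(z)=ty$, $\mathfrak{Im}(\xi)=\alpha t$), and deduces the second assertion by comparing the defining formulae (\ref{E:MetricLB2}) and (\ref{E:MetricLB1}) at $k=0$, just as you do.
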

\begin{proof} It is a straightforward computation that for any smooth global section $f$ of the line bundle $\kQ$,  we get  a smooth function
\begin{equation*}
\widehat{E} \times E \lar \RR_{>0}, \; (\xi, z)  \mapsto  \big\lVert f(\xi, z) \bigr\rVert^2 := \big|f(\xi, z)\big|^2 \,h(\xi, z).
\end{equation*}
It shows that $h$ indeed defines a hermitian metric on $\kQ$. The second result follows from the defining formulae (\ref{E:MetricLB2}) and (\ref{E:MetricLB1}).
\end{proof}

\smallskip
\noindent
Finally, we shall need the following well-known result.
\begin{lemma}\label{L:PoincareSumAndProduct} Let $T := 
\underbrace{E \times \dots \times E}_{n \; \mbox{\scriptsize{\sl times}}}$. For $1 \le j \le n$, consider the canonical projection map
$
\widehat{E} \times T \stackrel{p_j}\lar  \widehat{E} \times E, \; (\xi, z_1, \dots, z_n) \mapsto (\xi, z_j).
$
Then we have an isomorphism: 
\begin{equation}
p_1^\ast \kQ \otimes \dots \otimes p_n^\ast\kQ \cong (\mathsf{id} \times \mu_n)^\ast \kQ,
\end{equation}
where $T \stackrel{\mu_n}\lar E, (z_1, \dots, z_n) \mapsto z_1 +  \dots + z_n$. 
\end{lemma}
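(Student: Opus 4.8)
The plan is to prove the isomorphism by comparing the two line bundles directly through their automorphy factors on the universal cover $\CC^{n+1} = \CC \times \CC^n$ of $\widehat{E} \times T$, taken with respect to the lattice $\langle 1, \tau\rangle^{n+1}$. I would use two elementary and standard facts about bundles defined by automorphy factors: first, the pullback of such a bundle along a holomorphic group homomorphism is described by the automorphy factor obtained by composition with the induced linear map of covers; second, the tensor product of two such bundles is described by the pointwise product of their automorphy factors. Since the explicit automorphy factor of $\kQ$ on $\widehat{E} \times E$ has just been recorded above, it then suffices to compute the resulting factors of both sides on the generators of the lattice and to check that they agree.

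For the left-hand side, I write a point of the cover as $(\xi, z_1, \dots, z_n)$, so that $\mathsf{id} \times \mu_n$ is induced by the linear map $(\xi, z_1, \dots, z_n) \mapsto (\xi, z_1 + \dots + z_n)$. Composing with the defining rules for $\kQ$, the pulled-back automorphy factor is trivial on all integer translations $\xi \mapsto \xi + 1$ and $z_j \mapsto z_j + 1$; it equals $\exp\bigl(-2\pi i (z_1 + \dots + z_n)\bigr)$ on the translation $\xi \mapsto \xi + \tau$; and it equals $\exp(-2\pi i \xi)$ on each translation $z_j \mapsto z_j + \tau$ for $1 \le j \le n$.

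For the right-hand side, each $p_j$ is induced by $(\xi, z_1, \dots, z_n) \mapsto (\xi, z_j)$, so $p_j^\ast \kQ$ has the automorphy factor that is trivial on integer translations, equals $\exp(-2\pi i z_j)$ on $\xi \mapsto \xi + \tau$, equals $\exp(-2\pi i \xi)$ on $z_j \mapsto z_j + \tau$, and is trivial on $z_k \mapsto z_k + \tau$ for $k \ne j$ (since such a translation maps to the identity in the target). Multiplying these factors over $1 \le j \le n$ gives, on $\xi \mapsto \xi + \tau$, the product $\prod_{j} \exp(-2\pi i z_j) = \exp\bigl(-2\pi i (z_1 + \dots + z_n)\bigr)$; on each $z_j \mapsto z_j + \tau$ only the $j$-th factor contributes, yielding $\exp(-2\pi i \xi)$; and on integer translations the product stays trivial. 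This is precisely the automorphy factor computed for the left-hand side, so the two bundles coincide.

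The argument involves no genuine obstacle: everything reduces to bookkeeping of characters on the lattice generators. The only point deserving a word of care is that one must verify equality of the \emph{full} automorphy factors, not merely that they are cohomologous; but since both sides are honest pullbacks, their automorphy factors are determined on all of $\langle 1, \tau\rangle^{n+1}$ by their values on the generators via the cocycle identity $\Upsilon(\lambda + \mu, \cdot) = \Upsilon(\lambda, \cdot + \mu)\,\Upsilon(\mu, \cdot)$, and those generator values agree, so equality holds on the nose. Conceptually this is just the bilinearity of the Poincar\'e bundle, which could alternatively be deduced from the theorem of the cube together with an induction on $n$; the explicit automorphy factors simply make the direct verification cleaner. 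Note that the hermitian metric and the normalization of $\kQ$ play no role whatsoever in this statement.
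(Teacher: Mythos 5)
Your proof is correct, but it takes a genuinely different route from the paper. The paper disposes of this lemma in one sentence, citing the theorem of the cube (Mumford, \emph{Abelian varieties}, Section II.6): the bilinearity of the Poincar\'e bundle in the second variable, $(\mathsf{id}\times(f+g))^\ast\kQ \cong (\mathsf{id}\times f)^\ast\kQ \otimes (\mathsf{id}\times g)^\ast\kQ$, is a standard consequence of that theorem (or of the see-saw principle), and the lemma follows by induction on $n$ --- essentially the alternative you yourself mention in your closing remark. Your argument instead exploits the explicit automorphy-factor presentation of $\kQ$ recorded immediately before the lemma: you pull back the factor along the linear maps covering $\mathsf{id}\times\mu_n$ and the $p_j$, multiply, and check agreement on the lattice generators, noting correctly that the cocycle identity then forces equality of the factors on the whole lattice $\langle 1,\tau\rangle^{n+1}$, so the bundles are equal on the nose rather than merely isomorphic up to a coboundary. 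What each approach buys: the paper's citation is shorter and places the statement in its natural conceptual context (it is exactly the universal bilinearity of $\kQ$, valid on any abelian variety), while your computation is elementary, self-contained, and requires no machinery beyond the two standard facts about automorphy factors under pullback and tensor product --- a perfectly acceptable trade, especially since this paper works throughout with explicit automorphy factors anyway.
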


\begin{proof} This isomorphism can be for instance deduced from the theorem of the cube \cite[Section II.6]{MumfordAb}.
\end{proof}

\begin{lemma} For any $1 \le i \le g$ consider the morphism
$$
A \times A = \underbrace{E \times \dots \times E}_{g \; \mbox{\scriptsize{\sl times}}} \times \underbrace{E \times \dots \times E}_{g \; \mbox{\scriptsize{\sl times}}} \xrightarrow{\pi_{i, g+i}} E \times E, \; (\xi_1, \dots, \xi_g; z_1, \dots, z_g) \mapsto (\xi_i, z_i). 
$$
Then we have: 
$
(\psi \times \mathsf{id})_\ast(\kP)  \cong \bigotimes\limits_{i = 1}^g \pi_{i, g+i}^\ast(\kQ),
$
where the morphism $\psi$ was defined by (\ref{E:AandDualA}). 
\end{lemma}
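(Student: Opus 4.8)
The plan is to verify that the pullback of the right-hand side along $\psi \times \mathsf{id}$ satisfies the two properties that characterise the Poincaré bundle $\kP$ on $\widehat{A} \times A$, and then to invoke the seesaw uniqueness recalled above. Set $\kN := \bigotimes_{i=1}^{g} \pi_{i,g+i}^\ast(\kQ)$ and $\kM := (\psi \times \mathsf{id})^\ast(\kN)$, both line bundles on $\widehat{A} \times A$. Since $\psi \times \mathsf{id}$ is an isomorphism, the asserted formula $(\psi \times \mathsf{id})_\ast(\kP) \cong \kN$ is equivalent to $\kP \cong \kM$, so it suffices to check that $\kM$ restricts as the universal bundle does: trivially on $\widehat{A}\times\{\vec 0\}$, and to the prescribed degree-$0$ bundle on each fibre $\{[\vec\zeta]\}\times A$.

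Next I would carry out the restriction computation. Writing a point of $\widehat{A}$ as $[\vec\zeta]$ with $\vec\zeta\in\CC^g$, we have $(\psi\times\mathsf{id})([\vec\zeta],[\vec z]) = ([t\vec\zeta],[\vec z])$ by (\ref{E:AandDualA}). On the slice $\widehat{A}\times\{\vec 0\}$ each factor $\pi_{i,g+i}^\ast(\kQ)$ restricts to a pullback of $\kQ\big|_{E\times\{0\}}\cong\kO_E$, hence is trivial, so $\kM\big|_{\widehat{A}\times\{\vec 0\}}\cong\kO_{\widehat A}$. On a fibre $\{[\vec\zeta]\}\times A$ the composite $\pi_{i,g+i}\circ(\psi\times\mathsf{id})$ sends $[\vec z]$ to $(t\zeta_i,z_i)$, so $\pi_{i,g+i}^\ast(\kQ)$ restricts to $\mathrm{pr}_i^\ast\bigl(\kQ\big|_{\{t\zeta_i\}\times E}\bigr)\cong\mathrm{pr}_i^\ast(\kL_{0,t\zeta_i})$, where $\mathrm{pr}_i\colon A=E\times\dots\times E\to E$ is the $i$-th projection. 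Tensoring over $i$ (using the external-product bookkeeping as in Lemma \ref{L:PoincareSumAndProduct}, or simply the definition of $\boxtimes$) gives $\kM\big|_{\{[\vec\zeta]\}\times A}\cong\kL_{0,t\zeta_1}\boxtimes\dots\boxtimes\kL_{0,t\zeta_g}$. It then remains to identify this with the degree-$0$ bundle $\kL\bigl(0,\exp(2\pi i\langle\vec\zeta,-\rangle)\bigr)$ that the universal property attaches to $[\vec\zeta]$ via (\ref{E:IdentificationDualTorus}).

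The hard part is exactly this last identification, since both the universal Appell–Humbert datum and the external product split over the $g$ coordinates, it reduces to the single-variable assertion that on $E$ one has $\kL\bigl(0,\exp(2\pi i\langle\zeta\sigma_1,-\rangle)\bigr)\cong\kL_{0,t\zeta}$. This is precisely the $g=1$ instance of the duality identification $\psi$ of $\widehat E$ with $E$ from Proposition \ref{P:DualIsogeny} and (\ref{E:AandDualA}); concretely one compares the character $\gamma\mapsto\exp\bigl(2\pi i\,\mathfrak{Im}(\zeta\bar\gamma)\bigr)$ on $\Gamma=\ZZ+\tau\ZZ$ with the automorphy factor $\bigl(1,\exp(-2\pi i\,t\zeta)\bigr)$ of $\kL_{0,t\zeta}$ from (\ref{E:LBTorus}) and checks that the two describe the same point of $\Pic^0(E)$. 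This coordinate-wise matching of normalisations is the only non-formal step and is where I expect the bookkeeping to demand care; once it is in place, the seesaw principle gives $\kM\cong\kP$, which is the claim. Alternatively, one may prove the statement in a single step by writing the Appell–Humbert automorphy factor of $\kP$ from the universal datum $(H_c,\alpha_c)$, transporting it through $\psi\times\mathsf{id}$, and comparing it up to a coboundary (via the isomorphism criterion for $\kE(\Upsilon',V)$ and $\kE(\Upsilon'',V)$ stated earlier) with the product of the explicit automorphy factors of the $\kQ$-factors; the content is identical, but the seesaw route isolates the one genuine verification most cleanly.
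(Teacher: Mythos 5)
Your argument is correct, but it is genuinely different in character from what the paper does: the paper's entire proof is a citation of \cite[Exercise II.6.11]{BirkenhakeLange} (the decomposition of the Poincar\'e bundle of a product $A = E \times \dots \times E$ as the tensor product of the pullbacks of the Poincar\'e bundles of the factors), combined with the identification of $\kQ$ as the normalized Poincar\'e bundle on $\widehat{E} \times E$ made in the preceding subsection. You instead prove the statement from scratch: you reduce $(\psi \times \mathsf{id})_\ast(\kP) \cong \kN$ to $\kP \cong (\psi\times\mathsf{id})^\ast(\kN)$, verify the two normalization properties that the paper states characterize $\kP$ uniquely (triviality on $\widehat{A}\times\{\vec 0\}$, and the prescribed restriction $\kL\bigl(0,\exp(2\pi i\langle \vec\zeta,-\rangle)\bigr)$ on each slice $\{[\vec\zeta]\}\times A$), and reduce the latter, via the coordinate-wise splitting of the Appell--Humbert datum, to the single-variable identity $\kL\bigl(0,\exp(2\pi i\,\mathfrak{Im}(\zeta\bar{\,\cdot\,}))\bigr) \cong \kL_{0,t\zeta}$, which indeed follows from (\ref{E:LBTorus}) after writing $t\zeta = a\tau+b$ and evaluating the character on $\gamma = 1$ and $\gamma=\tau$ (giving $\exp(2\pi i a)$ and $\exp(-2\pi i b)$ respectively). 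This is exactly the "factor of $t$" bookkeeping hidden in the paper's identification of $\widehat{E}$ with $E$ via $\psi$, so your route has the merit of making explicit the one piece of content that the citation glosses over; the cost is that you must carry out (and should, in a final write-up, actually display) that two-line computation rather than leave it as "where I expect the bookkeeping to demand care". Two cosmetic points: the appeal to Lemma \ref{L:PoincareSumAndProduct} in the restriction step is unnecessary (and that lemma concerns the sum map, not external products) --- the definition of $\boxtimes$ suffices, as you note; and your final restriction on a slice should be stated as an isomorphism of Appell--Humbert data on the product torus, i.e.\ the character $\exp(2\pi i\langle\vec\zeta,-\rangle)$ on $\ZZ^g+\tau\ZZ^g$ splits as the product of its coordinate characters, which is what licenses the coordinate-wise reduction.
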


\begin{proof} This fact is a consequence of 
\cite[Exercise II.6.11]{BirkenhakeLange}.
\end{proof}

\section{Magnetic vector bundle of the multilayer torus model of FQHE}\label{S:Multilayer}

\noindent
In what follows, we shall use the identifications $\widehat{E} \stackrel{\psi}\lar E$ and $\widehat{A} \stackrel{\psi}\lar A$, where $\psi$ is given by the formula  (\ref{E:AandDualA}). For any $1 \le k \le g$ and $1 \le p \le n_k$ consider the projection morphism
\begin{equation*}
\widehat{A} \times X \xrightarrow{\pi_p^{(k)}} \widehat{E} \times E, \quad (\vec\xi, \vec{z}) \mapsto \bigl(\xi_k, z_p^{(k)}\bigr).
\end{equation*}
Let $\widehat{E} \times E \stackrel{\pi_E}\lar E$ be the canonical projection. 
Consider the following line bundle $\kE$ on the product $\widehat{A}\times X$:
\begin{equation}
\kE := \left\{
\begin{array}{cl}
\bigotimes\limits_{k = 1}^g \bigotimes\limits_{p = 1}^{n_k} \bigl(\pi_p^{(k)} \bigr)^\ast \Bigl(\kQ \otimes \pi_E^\ast\bigl(\kL_{d, 0}\bigr)\Bigr)
&  \, \mbox{\textrm if} \; \;   \epsilon(K) + d \in 2 \NN \\
\bigotimes\limits_{k = 1}^g \bigotimes\limits_{p = 1}^{n_k} \bigl(\pi_p^{(k)} \bigr)^\ast \Bigl(\kQ \otimes \pi_E^\ast\bigl(\kL^\sharp_{d, 0}\bigr)\Bigr) &  \, \mbox{\textrm if} \; \; \epsilon(K) + d \in 2 \NN +1.
\end{array}
\right. 
\end{equation}
It is clear that for any $\vec{\xi} \in \widehat{A}$ we have: $\kE\Big|_{\{\vec\xi\}\times X} \cong \kW_{K, \vec{\nn}, \vec{\xi}}$ (the latter line bundle was defined by  (\ref{E:linebundleNbody})). It follows from the K\"unneth formula that
\begin{equation}\label{E:Cohomology}
H^i\bigl(X, \kW_{K, \vec{\nn}, \vec{\xi}}\bigr) = \left\{
\begin{array}{cl} 
\mathsf{W}_{K, \vec{\nn}, \vec{\xi}} & \; \mbox{if} \, \; i = 0 \\
0 & \; \mbox{if} \, \; i \ge 1; 
\end{array}
\right.
\end{equation}
see also (\ref{E:FullSpaceOfWaveFunctions}) for the description of the space $\mathsf{W}_{K, \vec{\nn}, \vec{\xi}}$. 
\begin{lemma}\label{L:SpectralBundle} Let $\widehat{A} \stackrel{\widehat{\pi}}\longleftarrow \widehat{A} \times X \stackrel{{\pi}}\lar X$ be the canonical projections and 
$\kA := \widehat{\pi}_\ast(\kE)$. Then the following results are true.
\begin{enumerate}
\item For any $\vec{\xi} \in \widehat{A}$, the canonical base-change morphism
$$
\kA\Big|_{\vec{\xi}} \lar \Gamma\bigl(X, \kW_{K, \vec{\nn}, \vec{\xi}}\bigr) = \mathsf{W}_{K, \vec{\nn}, \vec{\xi}}
$$
is an isomorphism. In particular $\kA$ is a hermitian vector bundle on $\widehat{A}$. 
\item We have isomorphisms
\begin{equation*}
\kE \cong  \left\{
\begin{array}{cl}
(\mathsf{id} \times \mu)^\ast \kP \otimes \pi^\ast(\kL_{d, 0} \boxtimes \dots \boxtimes \kL_{d, 0})
&  \, \mbox{\textsl if} \; \;   \epsilon(K) + d \in 2 \NN \\
(\mathsf{id} \times \mu)^\ast \kP \otimes \pi^\ast(\kL^\sharp_{d, 0} \boxtimes \dots \boxtimes \kL^\sharp_{d, 0}) &  \, \mbox{\textsl if} \; \; \epsilon(K) + d \in 2 \NN +1,
\end{array}
\right. 
\end{equation*}
where the morphism $X \stackrel{\mu}\lar A$ was defined in Lemma \ref{L:sectionsofpullback2}.
\end{enumerate}
\end{lemma}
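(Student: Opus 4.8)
The plan is to treat the two assertions separately: part (1) will follow from the theorem on cohomology and base change applied to the proper flat morphism $\widehat\pi$, while part (2) is an explicit computation that factors the line bundle $\kE$ into a ``Poincar\'e part'' and a ``degree part'' and reduces the former to Lemma \ref{L:PoincareSumAndProduct}, one layer at a time.

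For part (1), I would first record that $\kE$ is a line bundle on $\widehat A\times X$, hence flat over $\widehat A$, with $\kE\big|_{\{\vec\xi\}\times X}\cong\kW_{K,\vec\nn,\vec\xi}$ for every $\vec\xi\in\widehat A$ by construction. The key input is the vanishing (\ref{E:Cohomology}): the fibrewise cohomology is concentrated in degree zero, with $H^0\bigl(X,\kW_{K,\vec\nn,\vec\xi}\bigr)=\mathsf{W}_{K,\vec\nn,\vec\xi}$ of dimension $d^{\,n}$ (by the K\"unneth formula and Proposition \ref{P:LibeBundlesTorus}), independently of $\vec\xi$. Since $\widehat A$ is reduced and connected, the theorem on cohomology and base change (see e.g.~\cite{MumfordAb}) then yields simultaneously that $R^i\widehat\pi_\ast(\kE)=0$ for $i\ge1$, that $\kA=\widehat\pi_\ast(\kE)$ is locally free of rank $d^{\,n}$, and that the base-change morphism $\kA\big|_{\vec\xi}\to\Gamma\bigl(X,\kW_{K,\vec\nn,\vec\xi}\bigr)$ is an isomorphism. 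The hermitian structure is then inherited fibrewise from the $L^2$-scalar product on $\mathsf{W}_{K,\vec\nn,\vec\xi}$ (the analogue of (\ref{E:scalarproductmv}) on $X$); since this scalar product is given by an integral depending smoothly on $\vec\xi$, it defines a smooth hermitian metric on the holomorphic bundle $\kA$.

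For part (2), I would factor $\kE$ as the tensor product of the Poincar\'e part $\bigotimes_{k,p}\bigl(\pi_p^{(k)}\bigr)^\ast\kQ$ and the degree part $\bigotimes_{k,p}\bigl(\pi_p^{(k)}\bigr)^\ast\pi_E^\ast(\kL_{d,0})$, the fermionic case $\epsilon(K)+d\in2\NN+1$ being identical with $\kL_{d,0}$ replaced by $\kL^\sharp_{d,0}$. The degree part is immediate: $\pi_E\circ\pi_p^{(k)}$ is just the projection of $\widehat A\times X$ onto the $(k,p)$-th elliptic factor of $X$, so tensoring over all $(k,p)$ reproduces $\pi^\ast(\kL_{d,0}\boxtimes\dots\boxtimes\kL_{d,0})$. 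For the Poincar\'e part I would use the last lemma of Section \ref{S:FMT}, which (under the identifications of $\widehat E$ with $E$ and $\widehat A$ with $A$ via $\psi$) expresses the Poincar\'e bundle $\kP$ on $\widehat A\times A$ as a tensor product $\bigotimes_{k=1}^g\mathrm{pr}_k^\ast\kQ$ over the $g$ layers, where $\mathrm{pr}_k$ picks out the $k$-th coordinate pair $(\xi_k,z_k)$. For fixed $k$, letting $q_k\colon\widehat A\times X\to\widehat E\times E^{\,n_k}$ be the projection onto $\bigl(\xi_k;z_1^{(k)},\dots,z_{n_k}^{(k)}\bigr)$, one checks that $\pi_p^{(k)}$ equals $q_k$ followed by the $p$-th coordinate projection and that $\mathrm{pr}_k\circ(\mathsf{id}\times\mu)$ equals $q_k$ followed by $\mathsf{id}\times\mu_{n_k}$; pulling Lemma \ref{L:PoincareSumAndProduct} back along $q_k$ then gives $\bigotimes_{p=1}^{n_k}\bigl(\pi_p^{(k)}\bigr)^\ast\kQ\cong(\mathsf{id}\times\mu)^\ast\mathrm{pr}_k^\ast\kQ$. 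Tensoring over $k=1,\dots,g$ identifies the Poincar\'e part with $(\mathsf{id}\times\mu)^\ast\kP$, and combining the two parts finishes the argument.

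I expect the main obstacle to be the index bookkeeping in part (2): matching the layered projections $\pi_p^{(k)}$ against the layer-wise splitting of $\kP$, and keeping the identifications $\psi$ of $\widehat E$ with $E$ and of $\widehat A$ with $A$ consistent, so that Lemma \ref{L:PoincareSumAndProduct} can be applied one layer at a time. Part (1) and the degree-part computation are formal by comparison.
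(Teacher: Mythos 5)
Your proposal is correct and follows essentially the same route as the paper: part (1) via the fibrewise cohomology vanishing (\ref{E:Cohomology}) combined with Mumford's cohomology-and-base-change theorem, and part (2) by splitting $\kE$ into its Poincar\'e and degree factors and applying Lemma \ref{L:PoincareSumAndProduct} layer by layer together with the decomposition of $\kP$ into $\bigotimes_k \mathrm{pr}_k^\ast \kQ$. The paper's own proof is just a terser version of exactly this argument, so your added bookkeeping (the projections $q_k$, the identification $\psi$, the rank $d^{\,n}$) is a faithful expansion rather than a different method.
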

\begin{proof} (1) It follows from (\ref{E:Cohomology}) that the assumptions of \cite[Corollary II.5.3]{MumfordAb} are satisfied, implying that the base-change map is indeed an isomorphism. Hence, $\kA$ is a vector bundle on $\widehat{A}$.  Next, the vector  space $\mathsf{W}_{K, \vec{\nn}, \vec{\xi}}$ is equipped with the hermitian vector product induced by the scalar product of one-particle system (\ref{E:scalarproduct}), which yields a hermitian metric on $\kA$.

\smallskip
\noindent
(2) This isomorphism follows from the definition of $\kE$ and Lemma \ref{L:PoincareSumAndProduct}.
\end{proof}

\begin{proposition} Let $\Xi_K$ be the divisor on $X$ defined by (\ref{E:WenDivisor}) and $\overline{\Xi}_K = \widehat{A} \times \Xi_K$ be the corresponding divisor on $\widehat{A} \times X$. We put 
$\kF:= \kE\bigl(-\bigl[\overline{\Xi}_K\bigr]\bigr)$. Then we have:
\begin{equation}\label{E:DerivedDirectImage}
R\widehat{\pi}_\ast(\kF) \cong 
\bigoplus\limits_{j = 0}^{n-g}\bigl(\kC[-j]\bigr)^{\oplus \binom{n-g}{j}},
\end{equation} 
where $\kC$ is the vector bundle introduced in Theorem \ref{T:FMTCenterMass}.
\end{proposition}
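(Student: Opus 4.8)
The plan is to reduce the left-hand side to the Fourier--Mukai transform $\FF_A(\kR)$ by peeling off the diagonal twist and then pushing the computation of $R\mu_\ast\kO_X$ (already carried out in the proof of Lemma~\ref{L:sectionsofpullback2}) through the Poincaré bundle. Throughout I write $p\colon \widehat{A}\times A\to A$ and $\widehat p\colon \widehat{A}\times A\to\widehat A$ for the two projections, so that $\FF_A(\kG)=R\widehat p_\ast\bigl(\kP\otimes p^\ast\kG\bigr)$, and I keep the paper's notation $\pi,\widehat\pi$ for the projections of $\widehat A\times X$.

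First I would rewrite $\kF$ as a pullback along $\mathsf{id}\times\mu$. Since $\overline{\Xi}_K=\widehat A\times\Xi_K$ is a vertical divisor, $\kO_{\widehat A\times X}(-[\overline{\Xi}_K])\cong\pi^\ast\kO_X(-[\Xi_K])$, so that $\kF\cong\kE\otimes\pi^\ast\kO_X(-[\Xi_K])$. Inserting the description of $\kE$ from Lemma~\ref{L:SpectralBundle}(2) and recognizing that, in either parity case,
\[
\bigl(\kL_{d,0}\boxtimes\dots\boxtimes\kL_{d,0}\bigr)\otimes\kO_X(-[\Xi_K]) = \kW_{K,\vec\nn,\vec 0}\bigl(-[\Xi_K]\bigr)\cong\mu^\ast\kR
\]
by Remark~\ref{R:CenterMassAllWave} (applied with $\vec\xi=\vec 0$, where $\kR=\kR_{\vec 0}=\kappa^\ast\kT$), the identity $\pi^\ast\mu^\ast=(\mathsf{id}\times\mu)^\ast p^\ast$ gives
\[
\kF\cong(\mathsf{id}\times\mu)^\ast\kP\otimes(\mathsf{id}\times\mu)^\ast p^\ast\kR=(\mathsf{id}\times\mu)^\ast\bigl(\kP\otimes p^\ast\kR\bigr).
\]

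Next I would feed this into the projection formula. Because $\widehat\pi=\widehat p\circ(\mathsf{id}\times\mu)$, the map $\mathsf{id}\times\mu$ is proper, and $\kP\otimes p^\ast\kR$ is a line bundle,
\[
R\widehat\pi_\ast\kF\cong R\widehat p_\ast\Bigl(\bigl(\kP\otimes p^\ast\kR\bigr)\otimes R(\mathsf{id}\times\mu)_\ast\kO_{\widehat A\times X}\Bigr).
\]
To evaluate $R(\mathsf{id}\times\mu)_\ast\kO_{\widehat A\times X}$ I would observe that $\mathsf{id}\times\mu$ is the base change of $\mu\colon X\to A$ along the flat projection $p$; flat base change then gives $R(\mathsf{id}\times\mu)_\ast\kO_{\widehat A\times X}\cong p^\ast\bigl(R\mu_\ast\kO_X\bigr)$. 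By the computation in the proof of Lemma~\ref{L:sectionsofpullback2}, $R\mu_\ast\kO_X\cong\bigoplus_{j=0}^{n-g}\bigl(\kO_A[-j]\bigr)^{\oplus\binom{n-g}{j}}$, hence $R(\mathsf{id}\times\mu)_\ast\kO_{\widehat A\times X}\cong\bigoplus_{j=0}^{n-g}\bigl(\kO_{\widehat A\times A}[-j]\bigr)^{\oplus\binom{n-g}{j}}$.

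Substituting and pulling the finite direct sum through $R\widehat p_\ast$ gives
\[
R\widehat\pi_\ast\kF\cong\bigoplus_{j=0}^{n-g}\Bigl(R\widehat p_\ast\bigl(\kP\otimes p^\ast\kR\bigr)[-j]\Bigr)^{\oplus\binom{n-g}{j}}=\bigoplus_{j=0}^{n-g}\bigl(\FF_A(\kR)[-j]\bigr)^{\oplus\binom{n-g}{j}},
\]
and applying $\psi_\ast$ (equivalently, invoking the standing identification $\widehat A\cong A$ of Section~\ref{S:Multilayer}) turns $\FF_A(\kR)$ into $\FF(\kR)=\kC$ by Theorem~\ref{T:FMTCenterMass}, which is the assertion. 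The argument is almost entirely formal once Step~1 is in place, so there is no genuine obstacle; the only points requiring care are the applicability of flat base change and of the projection formula, both of which hold because $\mu$—and hence $\mathsf{id}\times\mu$—is a proper, flat, surjective homomorphism of abelian varieties with connected fibres of dimension $n-g$, together with the bookkeeping needed to match $R\widehat p_\ast\bigl(\kP\otimes p^\ast\kR\bigr)$ with $\FF_A(\kR)$ and then with $\kC$ under $\psi$.
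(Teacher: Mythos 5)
Your proof is correct, but it takes a genuinely different route from the paper's in the main computational step. After the common first step (rewriting $\kF \cong (\mathsf{id}\times\mu)^\ast\bigl(\kP\otimes p^\ast\kR\bigr)$, which the paper also derives from Lemma \ref{L:SpectralBundle}(2) and Lemma \ref{L:KWwavefunction}), the paper does \emph{not} invoke base change: it argues by induction on $\vec{\nn}$, composing with shear automorphisms $\varrho$ of $X$ to factor $\mu$ through a projection with fibre $E$, and then uses $R\Gamma(E,\kO_E)\cong \CC\oplus\CC[-1]$ to peel off one elliptic-curve factor at a time, all carried out in the relative setting over $\widehat{A}$. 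You instead treat the relative statement as a formal consequence of the absolute one: the projection formula for the proper morphism $\mathsf{id}\times\mu$ plus flat base change along $p$ reduce $R\widehat{\pi}_\ast\kF$ to $R\widehat{p}_\ast\bigl((\kP\otimes p^\ast\kR)\otimes p^\ast R\mu_\ast\kO_X\bigr)$, and then you quote $R\mu_\ast\kO_X\cong \bigoplus_{j=0}^{n-g}\bigl(\kO_A[-j]\bigr)^{\oplus\binom{n-g}{j}}$ from the proof of Lemma \ref{L:sectionsofpullback2}, so that the answer is $\bigoplus_j\bigl(\FF_A(\kR)[-j]\bigr)^{\oplus\binom{n-g}{j}}$, identified with $\kC$ via $\psi$ and Theorem \ref{T:FMTCenterMass}. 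The two arguments rest on the same geometric input --- the shear trick computing the cohomology of the fibres of $\mu$ --- but your organization is more modular: it makes transparent that the Proposition follows formally from the already-established computation of $R\mu_\ast\kO_X$ together with standard compatibilities of derived functors, whereas the paper redoes the induction relatively and thereby stays self-contained (using only the projection formula implicitly, for the trivial $E$-fibration $\widetilde{\pi}$, and never needing flat base change). Your citation of Lemma \ref{L:sectionsofpullback2} is not circular, and your appeals to properness of $\mathsf{id}\times\mu$ and flatness of $p$ are exactly what the two formal tools require, so there is no gap.
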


\begin{proof} It follows from Lemma \ref{L:SpectralBundle} (part (2)) and  Lemma \ref{L:KWwavefunction} that
\begin{equation}\label{E:sheafFdescription}
\kF \cong (\mathsf{id} \times \mu)^\ast\bigl(\kP \otimes \pi_A^\ast(\kR)\bigr).
\end{equation}
Now we prove the statement by induction on $\vec{\nn} = (n_1, \dots, n_g)$. The case
$\vec{\nn} = (1, \dots, 1)$ is trivial. Without loss of generality we may assume that $n_1 \ge 2$. Consider the morphism $X \stackrel{\varrho}\lar X$ given by the rule
$$
\bigl(z_1^{(1)}, z_2^{(1)}, \dots, z_{n_1}^{(1)}; \dots, z_1^{(g)}, \dots, z_{n_g}^{(g)}\bigr) \stackrel{\varrho}\mapsto \bigl(z_1^{(1)}, z_2^{(1)} + z_1^{(1)}, \dots, z_{n_1}^{(1)}; \dots, z_1^{(g)}, \dots, z_{n_g}^{(g)}\bigr).
$$
Obviously, the diagram
$$
\xymatrix{
\widehat{A} \times X \ar[rr]^-{\mathsf{id} \times \varrho} \ar[rd] _-{\widehat{\pi}}& &  \widehat{A} \times X \ar[ld]^-{\widehat{\pi}} \\
& \widehat{A} & 
}
$$
is commutative. As a consequence, we have an isomorphism of functors
\begin{equation}\label{E:IsomBanal}
R\widehat{\pi}_\ast \cong R\widehat{\pi}_\ast \cdot (\mathsf{id} \times \varrho)^\ast.
\end{equation}
Let 
$
\breve{X} = \underbrace{E \times \dots \times E}_{n_1-1  \, \mbox{\scriptsize{\sl times}}} \times \dots \times \underbrace{E \times \dots \times E}_{n_g  \, \mbox{\scriptsize{\sl times}}} \stackrel{\breve\mu}\lar A,
$
where $\breve\mu = 
\left(\mu_{n_1-1} \times \mu_{n_2} \times \dots \times \mu_{n_g}\right)$ and 
$X \stackrel{\bar\pi}\lar \overline{X}, \bigl(z_1^{(1)}, z_2^{(1)}, \dots, z_{n_1}^{(1)}; \dots, z_1^{(g)}, \dots, z_{n_g}^{(g)}\bigr) \mapsto \bigl(z_2^{(1)}, \dots, z_{n_1}^{(1)}; \dots, z_1^{(g)}, \dots, z_{n_g}^{(g)}\bigr)$. It is obvious that the following diagram
$$
\xymatrix{
X \ar[r]^-\mu \ar[d]_-\varrho & A  \\
X \ar[r]^-{\bar\pi}  & \overline{X} \ar[u]_-{\breve\mu} 
}
$$
is commutative. As a consequence, we have an isomorphism of functors
$$
(\mathsf{id} \times \mu)^\ast \cong (\mathsf{id} \times \varrho)^\ast (\mathsf{id} \times \bar\pi)^\ast (\mathsf{id} \times \breve\mu)^\ast,
$$
where $\widehat{A} \stackrel{\mathsf{id}}\lar \widehat{A}$ is the identity map. 
From (\ref{E:IsomBanal}) we get an isomorphism of functors
$$
R\widehat{\pi}_\ast (\mathsf{id} \times \mu)^\ast \cong R\widehat{\pi}_\ast 
\widetilde\pi^\ast (\mathsf{id} \times \breve\mu)^\ast,
$$
where $\widetilde\pi = \mathsf{id} \times \bar\pi$. Let $\widehat{A} \times \overline{X} \stackrel{\breve\pi}\lar \widehat{A}$ be the canonical projection. 
Since the diagram 
$$
\xymatrix{
\widehat{A} \times X \ar[rr]^-{\widetilde\pi} \ar[rd] _-{\widehat{\pi}}& &  \widehat{A} \times \overline{X} \ar[ld]^-{\breve{\pi}} \\
& \widehat{A} & 
}
$$
is commutative, we have: $R\widehat{\pi}_\ast \cong R\breve{\pi}_\ast \, 
R\widetilde{\pi}_\ast$. We have (a non-canonical) isomorphism $R\Gamma(E, \kO_E) \cong \CC \oplus \CC[-1]$, which induces an isomorphism of functors
$$
R\widehat{\pi}_\ast (\mathsf{id} \times \mu)^\ast \cong R\breve{\pi}_\ast 
R\widetilde{\pi}_\ast
\widetilde\pi^\ast (\mathsf{id} \times \breve\mu)^\ast \cong 
R\breve{\pi}_\ast (\mathsf{id} \times \breve\mu)^\ast  \oplus R\breve{\pi}_\ast (\mathsf{id} \times \breve\mu)^\ast[-1]. 
$$
Using the description of $\kF$ given by (\ref{E:sheafFdescription}), it implies the statement. 
\end{proof} 

\begin{theorem}\label{T:MagneticBundle}
Let $\kB := \widehat{\pi}_\ast(\kF)$. Then the following results are true.
\begin{enumerate}
\item $\kB$ is a locally free sheaf of rank $\delta$ on $\widehat{A}$ (called \emph{magnetic vector bundle of the multi-layer torus model of FQHE}) isomorphic to the Fourier--Mukai transform $\kC$ of the line bundle $\kR$ on $A$. In particular, $\kB$ is simple and semi-homogeneous. Moreover, its first Chern class is given by the formula
\begin{equation}\label{E:FirstChernClass}
c_1(\kB) = -\frac{i}{2t} \sum\limits_{p, q = 1}^g K^\sharp_{pq} d\xi_{p} \wedge d\bar{\xi}_q,
\end{equation}
where $(\xi_1, \dots, \xi_g)$ are the standard (local) holomorphic coordinates on $\widehat{A}$. 
Next, $\kB$ is a subbundle of the hermitian vector bundle $\kA$ introduced in Lemma \ref{L:SpectralBundle} and it  carries the restricted hermitian metric of $\kA$.
\item For any $\vec{\xi} \in \widehat{A}$, the canonical base-change morphism
$
\kB\Big|_{\vec\xi} \longrightarrow  \Gamma\Bigl(X, \kF\big|_{\{\vec\xi\}\times X}\Bigr)
$
is an isomorphism. 
As a consequence,  we have a natural isomorphism
\begin{equation}
\kB\Big|_{\vec\xi} \lar \mathsf{V}_{K, \vec{\nn}, \vec\xi} \subset \mathsf{W}_{K, \vec{\nn}, \vec{\xi}},
\end{equation}
where $\mathsf{V}_{K, \vec{\nn}, \vec\xi}$ is the space of wave functions of Keski-Vekkuri and Wen (\ref{E:SpaceKV-WenWaveFunctions}) and $\mathsf{W}_{K, \vec{\nn}, \vec{\xi}}$ is the ground state  (\ref{E:FullSpaceOfWaveFunctions}) of our many-body system. In particular, the wave functions $(\Phi_{\vec{c}})_{\vec{c} \in \Pi}$ given by (\ref{E:KV-Wen-wavefunct}) 
provide a holomorphic frame of $\kB$ in a neighbourhood of $\vec{0}$. 
\end{enumerate}
\end{theorem}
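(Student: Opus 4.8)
The plan is to deduce almost everything from the preceding Proposition together with Theorem \ref{T:FMTCenterMass}. For part (1), I would pass to cohomology sheaves in the decomposition $R\widehat\pi_\ast(\kF) \cong \bigoplus_{j=0}^{n-g}(\kC[-j])^{\oplus\binom{n-g}{j}}$. Since the summand $\kC[-j]$ contributes only to the $j$-th cohomology sheaf, in degree zero only the term $j=0$ survives, and one obtains a canonical isomorphism $\kB = \widehat\pi_\ast(\kF) = \mathcal H^0\bigl(R\widehat\pi_\ast(\kF)\bigr) \cong \kC$. All the asserted structural properties then transfer verbatim from $\kC$ by Theorem \ref{T:FMTCenterMass}: $\kB$ is locally free of rank $\delta$, simple and semi-homogeneous, and formula (\ref{E:FirstChernClass}) is nothing but (\ref{E:ChernClassMagnetic}) transported along the identification $\widehat A \cong A$ of (\ref{E:AandDualA}), after renaming the standard coordinates $w_p$ on $A$ into $\xi_p$ on $\widehat A$.

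It remains to realize $\kB$ as a metric subbundle of $\kA$. The inclusion of sheaves $\kF = \kE\bigl(-[\overline{\Xi}_K]\bigr) \hookrightarrow \kE$ induces a morphism $\kB = \widehat\pi_\ast(\kF) \to \widehat\pi_\ast(\kE) = \kA$. Fibrewise, by the base-change identifications of part (2) and of Lemma \ref{L:SpectralBundle}, this is the inclusion $\mathsf V_{K,\vec\nn,\vec\xi} \hookrightarrow \mathsf W_{K,\vec\nn,\vec\xi}$, which is injective of constant rank $\delta$; hence $\kB \to \kA$ is a subbundle inclusion, and $\kB$ inherits the restriction of the hermitian metric on $\kA$ constructed in Lemma \ref{L:SpectralBundle}.

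For part (2), the key input is a cohomology-and-base-change argument, set up exactly as in Lemma \ref{L:SpectralBundle}(1). By the construction of $\kE$ and Remark \ref{R:CenterMassAllWave}, the restriction $\kF\big|_{\{\vec\xi\}\times X}$ is isomorphic to $\kW_{K,\vec\nn,\vec\xi}(-[\Xi_K]) \cong \mu^\ast\kR_{\vec\xi}$, whose space of global sections is, by definition (\ref{E:SpaceKV-WenWaveFunctions}), the space $\mathsf V_{K,\vec\nn,\vec\xi} \cong \mathsf W_{K,\vec\xi}$ of dimension $\delta$. To invoke Grauert's theorem I must check that all fibrewise cohomology dimensions are locally constant in $\vec\xi$; this follows from the projection formula $R\mu_\ast\mu^\ast\kR_{\vec\xi} \cong \kR_{\vec\xi}\otimes R\mu_\ast\kO_X$, the computation $R\mu_\ast\kO_X \cong \bigoplus_{j}(\kO_A[-j])^{\oplus\binom{n-g}{j}}$ borrowed from the proof of Lemma \ref{L:sectionsofpullback2}, and the vanishing $H^p(A,\kR_{\vec\xi}) = 0$ for $p\ge 1$, valid for every $\vec\xi$ since $\kR_{\vec\xi}$ is ample (as in the proof of Theorem \ref{T:HeisenbergGroupRepr}). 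Thus \cite[Corollary II.5.3]{MumfordAb} applies and the base-change map $\kB\big|_{\vec\xi} \to \Gamma\bigl(X,\kF\big|_{\{\vec\xi\}\times X}\bigr) = \mathsf V_{K,\vec\nn,\vec\xi}$ is an isomorphism, yielding the stated identification $\kB\big|_{\vec\xi} \cong \mathsf V_{K,\vec\nn,\vec\xi}\subset \mathsf W_{K,\vec\nn,\vec\xi}$.

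Finally, the frame statement follows because the $\delta$ functions $\Phi_{\vec c}$ of (\ref{E:KV-Wen-wavefunct}), $\vec c\in\Pi$, depend holomorphically on $\vec\xi$ and, through the isomorphism $\iota$ of Lemma \ref{L:Magnetic}, correspond to the basis $(H_{\vec c})_{\vec c\in\Pi}$ of $\mathsf W_{K,\vec\xi}$ from Theorem \ref{T:HeisenbergGroupRepr}; hence they restrict to a basis of every fibre $\mathsf V_{K,\vec\nn,\vec\xi}$ and, in a local trivialization near $\vec 0$, constitute a holomorphic frame of $\kB$. I expect the only genuinely delicate point to be the base-change verification in part (2)---specifically, confirming that the fibrewise higher cohomology has locally constant rank, so that Grauert's criterion produces an honest isomorphism rather than a mere comparison morphism; everything else is a transcription of the already-established Proposition and of Theorem \ref{T:FMTCenterMass}.
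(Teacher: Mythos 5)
Your proposal is correct and takes essentially the same route as the paper: $\kB \cong \kC$ is read off from the decomposition (\ref{E:DerivedDirectImage}), the structural properties and the Chern class formula are transferred from Theorem \ref{T:FMTCenterMass} via the identification (\ref{E:AandDualA}), the inclusion $\kB \hookrightarrow \kA$ comes from applying the left-exact functor $\widehat{\pi}_\ast$ to $\kF \hookrightarrow \kE$, and part (2) is a cohomology-and-base-change argument. The only differences are minor: where you verify constancy of the fibrewise cohomology dimensions (via the projection formula and Kodaira vanishing) in order to invoke Grauert, the paper simply observes that (\ref{E:DerivedDirectImage}) already makes every $R^i\widehat{\pi}_\ast(\kF)$ locally free and quotes \cite[Corollary II.5.2]{MumfordAb}, so that computation is not needed; conversely, your explicit fibrewise-injectivity check showing that $\kB \to \kA$ is a genuine subbundle inclusion (not merely a sheaf monomorphism) makes precise a point the paper leaves implicit.
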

\begin{proof} (1) It follows from (\ref{E:DerivedDirectImage}) that $\widehat{\pi}_\ast(\kF) \cong \kC$. Hence, simplicity and semi-homogeneity of $\kB$ as well as the formula for its first Chern class follow  from Theorem \ref{T:FMTCenterMass}.

Consider the canonical inclusion 
$\kE\bigl(- \bigl[\overline{\Xi}_K\bigr]\bigr) \stackrel{\imath}\lar \kE.
$
Since $\widehat{\pi}_\ast$ is left exact,  we get an induced inclusion  $
\kB  \xrightarrow{\widehat{\pi}_\ast(\imath)} \kA,$ where 
$\kA = \widehat{\pi}_\ast(\kE)$ and $\kB = \widehat{\pi}_\ast\bigl(\kE\bigl(- \bigl[\overline{\Xi}_K\bigr]\bigr) \cong \widehat{\pi}_\ast(\kF)$.
By Lemma \ref{L:SpectralBundle}, the vector bundle $\kA$ carries a natural hermitian metric, which provides a hermitian metric for its subbundle $\kB$.

\smallskip
\noindent
(2) According to  (\ref{E:DerivedDirectImage}), the coherent sheaf  $R^i\widehat{\pi}_\ast(\kF)$ is locally free for all $i \in \NN_0$. By \cite[Corollary II.5.2]{MumfordAb}, the base change map $
\kB\Big|_{\vec\xi} \lar \Gamma\Bigl(X, \kF\big|_{\{\vec\xi\}\times X}\Bigr)
$
is an isomorphism.  It follows that for any  $\vec\xi  \in \widehat{A}$, we have a commutative diagram
\begin{equation}\label{E:MagneticBundleFibers}
\begin{array}{c}
\xymatrix{
\kB\Big|_{\vec{\xi}} \ar@{_{(}->}[d]_-{\imath_{\vec\xi}} \ar[r]^-{\cong}  & 
 \Gamma\Bigl(X, \kW_{K, \vec{\nn}, \vec{\xi}}\bigl(-[\Xi_K]\bigr)\Bigr)
\ar@{_{(}->}[d] \ar[r]^-\cong & \mathsf{V}_{K, \vec{\nn}, \vec{\xi}}  \ar@{^{(}->}[d] \\
\kA\Big|_{\vec\xi}  \ar[r]^-\cong & 
\Gamma\bigl(X, \kW_{K, \vec{\nn}, \vec{\xi}}\bigr) \ar[r]^-\cong & \mathsf{W}_{K, \vec{\nn}, \vec{\xi}}, \\
}
\end{array} 
\end{equation} 
see also Remark \ref{R:CenterMassAllWave}.
\end{proof}

\begin{remark}
The holomorphic vector bundle $\kB$ is determined (up to non-canonical isomorphisms!) by the matrix $K$ alone and does not depend on the choice of the vector $\vec{\nn}$ specifying the number of particles of our many-body system. On the other hand, $\kB$ carries a natural hermitian metric (coming from the scalar product in the Hilbert space of the underlying many-body problem), which depends on the choice of $\vec{\nn}$. 
\end{remark}

\begin{proposition}\label{P:ProjFlat} Let $\bigl(E, (K, \vec{\nn})\bigl)$ be a KV-W datum with a primary matrix $K$. Then the canonical Bott--Chern  connection of the holomorphic hermitian vector bundle $(\kB, h)$ is projectively flat. 
\end{proposition}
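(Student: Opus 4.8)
The plan is to read off projective flatness directly from the explicit holomorphic frame of $\kB$ together with the shape of its Gram matrix. Recall that on a holomorphic hermitian vector bundle the canonical Bott--Chern connection is the Chern connection, and that in any local holomorphic frame $(s_1, \dots, s_\delta)$ with Gram matrix $H = \bigl(\langle s_i, s_j\rangle\bigr)$ its connection form is $H^{-1}\partial H$ and its curvature is $\Theta = \bar\partial\bigl(H^{-1}\partial H\bigr)$. Projective flatness is equivalent to the vanishing of the trace-free part $\Theta_0 := \Theta - \frac{1}{\delta}\mathsf{tr}(\Theta)\,\mathsf{Id}_{\kB}$, i.e.\ to $\Theta$ being a scalar $(1,1)$-form times the identity.

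First I would pass to the universal covering $p\colon \CC^g \to \widehat{A}$. By part~(2) of Theorem~\ref{T:MagneticBundle} the wave functions $\bigl(\Phi_{\vec c}\bigr)_{\vec c \in \Pi}$, regarded through (\ref{E:KV-Wen-wavefunct}) as holomorphic functions of $\vec\xi$, trivialize $\kB$ near $\vec 0$; lifted to $\CC^g$ they give a holomorphic frame of $p^\ast\kB$ over all of $\CC^g$, remaining linearly independent at every point because the induced norms are strictly positive. Since $K$ is primary, Theorem~\ref{T:GramMatrixKVWSpace} computes the Gram matrix of this frame to be $H(\vec\xi) = c(\vec\xi)\, I_\delta$ with $c = c(K, \vec\nn, \vec\xi)$ a nowhere-vanishing (indeed positive) scalar. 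Feeding this into the curvature formula gives $H^{-1}\partial H = \partial(\log c)\,I_\delta$ and therefore $\Theta = \bar\partial\partial(\log c)\,I_\delta$, a scalar $(1,1)$-form times the identity on the whole of $\CC^g$.

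It remains only to descend this conclusion. The Chern connection is compatible with pullback along the local biholomorphism $p$, so $p^\ast\Theta_0$ equals the trace-free part of the curvature just computed, which is $0$; as $p^\ast$ is injective on sections of $\End(\kB)$-valued forms, $\Theta_0 = 0$ on $\widehat{A}$ and the Bott--Chern connection is projectively flat. The one substantive ingredient, and the sole place where primality of $K$ is used, is the scalar form $H = c\,I_\delta$ of the Gram matrix; this is precisely what can fail when $K$ is not primary, since then $[\vec u]$ need not generate $\Pi$, Theorem~\ref{T:GramMatrixKVWSpace} no longer applies, and the trace-free curvature need not vanish.
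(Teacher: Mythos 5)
Your proof is correct and takes essentially the same route as the paper's: the holomorphic frame $\bigl(\Phi_{\vec{c}}\bigr)_{\vec{c} \in \Pi}$ from Theorem \ref{T:MagneticBundle}, the scalar Gram matrix of Theorem \ref{T:GramMatrixKVWSpace} (the sole point where primality of $K$ enters), and the resulting scalar connection and curvature matrices of the Chern (= Bott--Chern) connection. The only difference is one of packaging: you lift to the universal cover $\CC^g \to \widehat{A}$ and descend, which makes explicit that the trace-free curvature vanishes at every point of $\widehat{A}$, whereas the paper performs the same computation in the frame near $\vec{0}$.
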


\begin{proof} The holomorphic hermitian vector bundle $(\kB, h)$ on the dual torus $\widehat{A}$ was defined 
in Theorem \ref{T:MagneticBundle}. By a result of Bott and Chern \cite[Proposition 3.2]{BottChern} there exists a unique hermitian connection 
$\kB \stackrel{\nabla}\lar  \kB \otimes \Omega^1_{\widehat{A}}$ such that
$\nabla^{(0, 1)} = \bar\partial_{\kB}$, where $\kB \stackrel{\bar\partial_{\kB}}\lar  \kB \otimes \Omega^{0, 1}_{\widehat{A}}$ is the differential operator, which determines   the holomorphic structure on the complex vector bundle $\kB$. 

By Theorem \ref{T:MagneticBundle},  
$F = \bigl(\Phi_1, \dots, \Phi_\delta\bigr)$ is a \emph{holomorphic} frame of $\kB$ in a neighbourhood of $\vec{0} \in \widehat{A}$. According to \cite[Proposition 3.2]{BottChern}, the connection  matrix $P$ of $\nabla$ relative to the  frame $F$ is given by the formula
$
P = \partial(C) C^{-1},
$
where $C$ is the Gram matrix of $F$. By Theorem \ref{T:GramMatrixKVWSpace}, $C$ is a scalar matrix. As a consequence, $P$ is a scalar matrix as well. By \cite[Corollary 3.10]{BottChern}, the curvature matrix of $\nabla$ in the frame $F$ is equal to $\bar\partial(P)$, which is again scalar. It implies that the canonical connection $\nabla$ is projectively flat, as asserted.  
\end{proof}

\begin{example} For Wen's matrix $K$ given by (\ref{E:WenFroehlich})  the first Chern class of $\kB$ is given by the formula
\begin{equation}\label{E:ChernClassMagneticSpecial}
c_1(\kB) = -\frac{\bigl(p(g-1) +1\bigr)}{2t}i \sum\limits_{r = 1}^g  d\xi_{r} \wedge d\bar{\xi}_r + \frac{p}{t}i \sum\limits_{1 \le r < s \le g}  d\xi_{r} \wedge d\bar{\xi}_s.
\end{equation}
This is  a straightforward consequence of  (\ref{E:ChernClassMagnetic}) and the explicit formula (\ref{E:Kadjunct}) for the adjunct matrix of $K$. 
\end{example}

\begin{remark}\label{R:ProjFlat} Note that Theorem \ref{T:FMTCenterMass} applies to any symmetric and positive definite matrix $K \in \Mat_{g \times g}(\ZZ)$
(which \emph{need not be a Wen's matrix}) and gives  a vector bundle $\kC$ of rank $\delta$ on the dual torus $\widehat{A}$. For any $\vec\xi \in \CC^g$ we have a natural identification $\kC\Big|_{[\vec\xi]} \cong \mathsf{W}_{K, \vec\xi}$.
In fact, $\kC$ is isomorphic (as a holomorphic vector bundle) to the magnetic  vector bundle $\kB$; see Theorem \ref{T:MagneticBundle}. However, $\kC$ carries a \emph{different} hermitian metric $h_c$, which is defined  by the scalar product (\ref{E:scalarproductmv}). Theorem \ref{T:HeisenbergGroupRepr}
 implies that the Bott--Chern connection of the hermitian holomorphic vector bundle $(\kC, h_c)$ is projectively flat. 
\end{remark}

\section{Restricted magnetic vector bundle}\label{S:RMultilayer}
Let $\bigl(E, (K, \vec\nn)\bigr)$ be an arbitrary KV-W datum, $\kB \cong \kC$ be the corresponding magnetic bundle  on $\widehat{A} =  \underbrace{\widehat{E} \times \dots \times \widehat{E}}_{g \; \mbox{\scriptsize{\sl times}}}$ (see Theorem \ref{T:MagneticBundle} and Theorem \ref{T:FMTCenterMass}). 

\begin{theorem}\label{T:MagneticBundleCurve}
Let $\widehat{E} \stackrel{\imath}\lar \underbrace{\widehat{E} \times \dots \times \widehat{E}}_{g \; \mbox{\scriptsize{\sl times}}}$ be the diagonal embedding and $\kU := \kB\big|_{\widehat{E}}$ be the \emph{restricted magnetic bundle} of the multi-layer torus model of FQHE. Then $\kU$ has rank $\delta$ and degree $-\rho$. In particular, its slope is 
\begin{equation}\label{E:Slope}
\frac{\mathsf{deg}(\kU)}{\mathsf{rk}(\kU)}  = - \dfrac{\rho}{\delta}.
\end{equation}
Moreover, $\kU$ is stable if and only if the matrix $K$ is primary. 
\end{theorem}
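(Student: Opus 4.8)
The plan is to read off the rank from Theorem~\ref{T:MagneticBundle}, compute the degree by restricting the first Chern class to the diagonal, and then deduce stability from the classification of semistable bundles on an elliptic curve. The rank is immediate: restriction along the closed immersion $\imath$ preserves rank, so $\rk(\kU)=\rk(\kB)=\delta$, and $\kU$ is locally free since $\kB$ is. For the degree I would pull back the first Chern class (\ref{E:FirstChernClass}) along $\imath$. On the diagonal the coordinates $\xi_1,\dots,\xi_g$ are all identified with one coordinate $\xi$ on $\widehat{E}$, so $d\xi_p=d\xi$, $d\bar\xi_q=d\bar\xi$, and using the identity (\ref{E:SumAdjunct}) one gets
\[
\imath^\ast c_1(\kB)\;=\;-\frac{i}{2t}\Bigl(\sum_{p,q=1}^{g}K^\sharp_{pq}\Bigr)\,d\xi\wedge d\bar\xi\;=\;-\frac{i\rho}{2t}\,d\xi\wedge d\bar\xi .
\]
Since $\frac{i}{2t}\,d\xi\wedge d\bar\xi$ is precisely the Chern form of the degree-one bundle $\kL_{1,0}$ on $\widehat{E}\cong E=\CC/\langle 1,\tau\rangle$ (it has integral $1$ over $\widehat{E}$ with the metric normalization (\ref{E:MetricLB1})), integrating yields $\mathsf{deg}(\kU)=-\rho$, and hence the slope (\ref{E:Slope}).

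For stability I would work on the elliptic curve $\widehat{E}$. The key preliminary step is that $\kU$ is \emph{semistable}. Indeed, $\kB\cong\kC$ is semi-homogeneous by Theorem~\ref{T:MagneticBundle}, and this property is inherited by $\kU$: for $a\in\widehat{E}$ the translation $t_a$ preserves the subgroup $\widehat{E}$, so restricting the isomorphism $t_a^\ast\kB\cong\kB\otimes\kL_a$ to $\widehat{E}$ gives $t_a^\ast\kU\cong\kU\otimes\bigl(\kL_a\big|_{\widehat{E}}\bigr)$. Thus $\kU$ is a semi-homogeneous, hence semistable, bundle on $\widehat{E}$ by Mukai's theory \cite{MukaiSemiHom}. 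I will then invoke two standard facts from Atiyah's classification of vector bundles on an elliptic curve: (i) every semistable bundle is a direct sum of indecomposable bundles all of the same slope; (ii) an indecomposable bundle of rank $r$ and degree $e$ has $\dim_\CC\End=\gcd(r,e)$, so it is simple — equivalently stable — if and only if $\gcd(r,e)=1$.

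Assembling these, set $e:=\gcd(\delta,\rho)$, so that $K$ is primary exactly when $e=1$. If $K$ is primary, decompose $\kU=\bigoplus_j V_j$ as in (i): each indecomposable summand has slope $-\rho/\delta$, which is already in lowest terms, so $\delta\mid\rk(V_j)$; as $\sum_j\rk(V_j)=\delta$ this forces a single summand, and $\kU$ is indecomposable with $\gcd(\rk,\mathsf{deg})=\gcd(\delta,\rho)=1$, hence stable by (ii). Conversely, if $K$ is not primary and $\kU$ were stable, then $\kU$ would be simple, hence indecomposable, and (ii) would force $\gcd(\delta,\rho)=1$, a contradiction; so $\kU$ is not stable. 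This proves the equivalence.

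The step I expect to be the main obstacle is the semistability input: one must make precise that restricting the semi-homogeneous bundle $\kB$ to the abelian subvariety $\widehat{E}$ again yields a semi-homogeneous (hence semistable) bundle, which the argument above reduces to the fact that $\widehat{E}$ is a subgroup and is therefore stable under translation by its own points. A secondary point that needs care is the normalization in the degree computation, namely fixing the convention (consistent with the identification $\widehat{A}\cong A$ used throughout) under which $\frac{i}{2t}\,d\xi\wedge d\bar\xi$ integrates to $1$ over $\widehat{E}$; once this is pinned down, the value $-\rho$ is forced.
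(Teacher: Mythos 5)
Your proof is correct, and two of its three ingredients coincide with the paper's own argument: the rank/degree computation (pull back (\ref{E:FirstChernClass}) along $\imath$ and apply (\ref{E:SumAdjunct}); your normalization check that $\frac{i}{2t}\,d\xi\wedge d\bar\xi$ integrates to $1$ is exactly what the paper leaves implicit) and the direction ``stable $\Rightarrow$ primary'' (stable $\Rightarrow$ simple $\Rightarrow$ coprime rank and degree, which the paper gets from \cite[Corollary 4.5]{BK3} and \cite[Theorem 10]{Atiyah}). Where you genuinely diverge is the direction ``primary $\Rightarrow$ stable''. Your route: restrict semi-homogeneity of $\kB$ to the diagonal --- a sound step, since the diagonal is an abelian subvariety, so $t_a\circ\imath=\imath\circ t_a$ for $a\in\widehat{E}$ and the twisting line bundle restricts to one of degree zero --- then invoke Mukai to get semistability of $\kU$, and finish with Krull--Schmidt arithmetic (every indecomposable summand has slope $-\rho/\delta$, which in lowest terms has denominator $\delta$, forcing a single summand of coprime rank and degree, hence stable). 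The paper instead builds the fibre-product diagram (\ref{E:PullBackAbVar}) of elliptic curves over $\widehat{A}$, uses base change to identify $\kU\cong\gamma_\ast\bigl(\varepsilon^\ast(\kV)\bigr)$ as the pushforward of a line bundle of degree $-\rho$ along an isogeny $\gamma$ of degree $\delta$, and then applies Oda's theorem \cite[Theorem 1.2]{Oda} to conclude $\End_{\widehat{E}}(\kU)\cong\CC$; primariness enters there precisely as the coprimality hypothesis of Oda's theorem. Your route buys economy --- no auxiliary covering $\widetilde{E}$, no base-change argument, only soft functoriality plus two classical black boxes --- but the step you yourself flagged is the one that must be cited precisely: ``semi-homogeneous $\Rightarrow$ semistable'' is not formal (the naive argument with the maximal destabilizing subsheaf does not close up, since the twisting bundles have degree zero), yet it is indeed a theorem of Mukai (every semi-homogeneous bundle admits a filtration whose quotients are simple semi-homogeneous bundles of the same slope), so you should point to that specific statement in \cite{MukaiSemiHom}, and likewise pin $\dim_\CC\End=\gcd(\rk,\mathsf{deg})$ for indecomposables to Atiyah. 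What the paper's heavier route buys, beyond stability, is the explicit presentation of $\kU$ as a pushforward of a line bundle, which is what makes the subsequent remark on describing $\kU$ by automorphy factors possible.
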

\begin{proof}
We deduce from  (\ref{E:FirstChernClass}) and 
(\ref{E:SumAdjunct}) that
$$
c_1(\kU) = \imath^\ast\bigl(c_1(\kB)\bigr) = -\dfrac{i}{2t} \sum_{k, l = 1}^g 
K^\sharp_{kl}  d\xi \wedge d\bar{\xi} = -\dfrac{i \rho}{2t} d\xi \wedge d\bar{\xi},
$$
hence $\mathsf{deg}(\kU) = -\rho$. 
A vector bundle on an elliptic curve  is simple if and only if it is stable; 
see for instance \cite[Corollary 4.5]{BK3}. Moreover, the rank and degree of a simple vector bundle on an elliptic curve  are coprime; see \cite[Theorem 10]{Atiyah}.
Hence, if  $\kU$ is stable then  the matrix $K$ is primary. 

\smallskip
\noindent
Conversely, suppose that $K$ is primary. 
We have a morphism of abelian varieties  
$$\widetilde{E}:= \CC/(\ZZ + \delta^{-1}\tau \ZZ) \stackrel{\varepsilon}\lar
\widehat{B} = \CC^g/(\ZZ^g + \tau K^{-1} \ZZ^g), \quad 
[z] \mapsto \bigl[\delta z \vec{u}\bigr].
$$
Consider the \'etale covering $\widetilde{E} \stackrel{\gamma}\lar \widehat{E}, [z] \mapsto [\delta z]$. Then the diagram 
\begin{equation}\label{E:PullBackAbVar}
\begin{array}{c}
\xymatrix{
\widetilde{E} \ar[r]^-{\varepsilon} \ar[d]_-{\gamma} & \widehat{B} \ar[d]^-{\widehat\kappa} \\
\widehat{E} \ar[r]^-\imath &  \widehat{A}
}
\end{array}
\end{equation}
is commutative, where $\widehat{B} = \CC^g/(\ZZ^g + \tau K^{-1}\ZZ^g) \stackrel{\widehat{\kappa}}\lar \widehat{A} = \CC^g/(\ZZ^g + \tau \ZZ^g)$ is given by the formula $\widehat{\kappa}\bigl([\vec{w}]\bigr) = \bigl([K\vec{w}]\bigr)$. Moreover, (\ref{E:PullBackAbVar}) is a pull-back diagram in the category of complex algebraic varieties.  One can check the universal property of a pull-back in a straightforward way. Alternatively, let 
$\begin{array}{c}
\xymatrix{
\overline{E} \ar[r]^-{\psi} \ar[d]_-{\phi} & \widehat{B} \ar[d]^-{\widehat\kappa} \\
\widehat{E} \ar[r]^-\imath &  \widehat{A}
}
\end{array}
$
be a pull-back diagram in the category of complex algebraic schemes. By \cite[Proposition I.3.3]{Milne} $\overline{E}$ is an elliptic curve and $\phi$ is an \'etale morphism of degree $\delta$. By the universal property of the pull-back we get a uniquely determined morphism $\overline{E} \stackrel{\widetilde\phi}\lar \widetilde{E}$ such that the following diagram 
$$ 
\xymatrix{
\overline{E} \ar@/_/[ddr]_-\phi \ar@/^/[drr]^-\psi
\ar@{.>}[dr]|-{\widetilde\phi} \\
& \widetilde{E} \ar[d]_-\gamma \ar[r]^-\varepsilon
& \widehat{B} \ar[d]^-{\widehat\kappa} \\
& \widehat{E} \ar[r]^-\imath & \widehat{A}.}
$$
is commutative. Since $\phi$ and $\gamma$ are both \'etale morphisms of the same degree, $\widetilde\phi$ is an isomorphism. 

\smallskip
\noindent
Let $\kV := \FF_B(\kT)$. Then we have: $\kB \cong \widehat\kappa_\ast \kV$;  see Proposition \ref{P:FMTdualIsogeny}.
Now, we have a base-change isomorphism
$
\kU = \imath^\ast\bigl(\widehat{\kappa}_\ast(\kV)\bigr) \cong \gamma_\ast\bigl(\varepsilon^\ast(\kV)\bigr).
$ It follows from the Riemann-Roch formula that
$$
-\rho = \mathrm{deg}(\kU) = \chi(\kU) = 
\chi\bigl(\varepsilon^\ast(\kV)\bigr) = \mathsf{deg}\bigl(\varepsilon^\ast(\kV)\bigr).
$$
Since the degree of the isogeny $\gamma$ and the degree of the line bundle 
$\varepsilon^\ast(\kV)$ are coprime, it follows from  \cite[Theorem 1.2]{Oda}
that $\End_{\widehat{E}}(\kU) \cong \CC$. Hence, the vector bundle $\kU$ is stable; see for instance \cite[Corollary 4.5]{BK3}.
\end{proof}

\begin{remark}
One can explicitly describe the vector bundle $\kU$ in terms of automorphy factors; see for instance \cite[Proposition 4.1.6]{BK4}.
\end{remark}

\begin{remark}
The minus sign in the  expression of the degree  of the vector bundle $\kU$ might look irritating. Note, however, that the same sign appears in a work of Kohmoto identifying the first Chern class of the  magnetic line bundle with the Hall conductance in the integral quantum Hall effect; see \cite[formula (4.9)]{Kohmoto}.
\end{remark}

\begin{remark}\label{R:Jain} Let $K = K_{p, g}$ be the matrix given by (\ref{E:WenFroehlich}). Then  the absolute value of the slope of $\kU$  is a Jain fraction (see \cite{Jain}): 
\begin{equation}\label{E:JainFraction}
\left|\frac{\mathsf{deg}(\kU)}{\mathsf{rk}(\kU)} \right| = \dfrac{g}{gp +1}.
\end{equation}
\end{remark}

\smallskip
\noindent
\emph{Acknowledgement}. We are grateful to M.~Zirnbauer and D.~Zvonkine for many enlightening discussions of the results of this paper as well as to both anonymous referees for
their helpful comments and remarks. Our work was partially  supported by the  German Research Foundation SFB-TRR 358/1 2023 — 491392403, by the Initiative
d’excellence (Idex) program and the Institute for Advanced Study Fellowship of the University
of Strasbourg, and the ANR-20-CE40-0017 grant.  

\smallskip
\noindent
\emph{Declarations}. 

\smallskip
\noindent
\emph{Conflict of interest}.  The authors have no competing interests to declare that are relevant to the content of this
article.

\end{document}